\theoremstyle{plain}
\newtheorem{lemma}{Lemma}[section]
\newtheorem{theorem}[lemma]{Theorem}
\newtheorem{proposition}[lemma]{Proposition}
\newtheorem{corollary}[lemma]{Corollary}
\theoremstyle{definition}
\newtheorem{definition}[lemma]{Definition}
\newtheorem{remark}[lemma]{Remark}
\numberwithin{equation}{section}
\newcommand{\dom}{\textrm{Dom\,}}
\newcommand{\R}{\mathbb{R}}
\newcommand{\N}{\mathbb{N}}
\newcommand{\supp}{\text{\rm supp}}
\newcommand{\Lip}{\mathrm{Lip}}
\newcommand{\gr}{\textrm{graph}}
\newcommand{\ve}{\varepsilon}
\newcommand{\f}{\varphi}
\renewcommand{\r}{\varrho}
\renewcommand{\L}{\mathcal{L}}
\newcommand{\mm}{\mathfrak m}
\newcommand{\Tan}{{\rm Tan}}
\newcommand{\sfd}{\mathsf d}
\begin{document}
\title[Tangent lines and Lipschitz differentiability spaces]
{Tangent lines and \\ Lipschitz differentiability spaces}
\author{Fabio Cavalletti}
\address{Centro de Giorgi - SNS}
\email{fabio.cavalletti@sns.it}

\author{Tapio Rajala}
\address{University of Jyvaskyla\\
         Department of Mathematics and Statistics \\
         P.O. Box 35 (MaD) \\
         FI-40014 University of Jyvaskyla \\
         Finland}
\email{tapio.m.rajala@jyu.fi}

%\keywords{optimal transport; existence of maps; uniqueness of maps; measure contraction property}
%

\bibliographystyle{plain}

\begin{abstract}
We study the existence of tangent lines, i.e. subsets of the tangent space isometric to the real line,
in tangent spaces of metric spaces.
We first revisit the almost everywhere metric differentiability of Lipschitz continuous curves. 
We then show that any blow-up done at a point of metric differentiability and of density
one for the domain of the curve gives a tangent line.   

Metric differentiability enjoys a Borel measurability property and this will permit us to use it
in the framework of Lipschitz differentiability spaces. 
We show that any tangent space of a Lipschitz differentiability space contains at least $n$ distinct tangent lines, 
obtained as the blow-up of $n$ Lipschitz curves, where $n$ is the dimension of the local measurable chart. 
Under additional assumptions on the space, such as curvature lower bounds, these $n$ distinct tangent lines span an
$n$-dimensional part of the tangent space.
\end{abstract}

\maketitle
%\tableofcontents

\section{Introduction}

During the past few years there has been growing interest towards studying the infinitesimal structure of 
``nice'' metric measure spaces. One class of nice metric measure spaces is formed by the ones in which
Lipschitz functions are differentiable almost everywhere with respect to Lipschitz charts covering the space.
The study of such spaces originates from the work of Cheeger \cite{cheeger:lip} and the spaces are now often called
Lipschitz differentiability spaces (following Bate \cite{bate:measurelip}).
Cheeger proved that a doubling condition on the reference measure and the validity of a local Poincar\'e inequality 
(as defined by Heinonen and Koskela \cite{HK}) are sufficient for the space to be a Lipschitz differentiability space. 
Although there are quite wild examples of doubling metric measure spaces supporting a local Poincar\'e inequality
\cite{BB,laakso,semmes2}, these assumptions still have strong geometric implications, \cite{cheeger:lip,semmes,korte}.
In particular, there are lots of rectifiable curves joining any two points in such a space.

A general Lipschitz differentiability space might not contain any rectifiable curves besides the trivial ones.
However, they always contain sufficiently many broken curves in different directions so that
the reference measure can be expressed by independent Alberti representations that completely characterize
derivatives of Lipschitz functions, see the work of Bate \cite{bate:measurelip}.
On the other hand, when we perform a Gromov-Hausdorff blow-up of a broken biLipschitz curve
$\gamma \colon \dom(\gamma) \to X$ of the metric space $X$ at a density point
of the domain $\dom(\gamma)$ the broken curve approaches, after passing to a subsequence,
a limit curve defined on the whole $\R$.

We first define metric differentiability, see Definition \ref{D:metricdiffer}, and then
prove that, at points of metric differentiability, 
this limit curve is a line-segment, see Proposition \ref{P:generaline}. 
By a result of Kirchheim \cite{kirch:diff}, we observe that metric differentiability coincides
 with the metric speed at almost every point of $\dom(\gamma)$.
Therefore we deduce that a Lipschitz curve $\gamma$ is metrically differentiable at almost every point
(see also Proposition \ref{P:metricdiff} for an alternative proof of this fact). 
Thus broken biLipschitz curves always converge to a line-segment at almost every point of their domain.

Given an $n$-dimensional Lipschitz chart on a Lipschitz differentiability space we know from the
work of Bate \cite{bate:measurelip} that there exist $n$ independent Alberti representations. 
Noticing that metric differentiability is Borel measurable, see Lemma \ref{L:Borel}, 
one can use it in the context of Lipschitz differentiability spaces to deduce that (see Proposition \ref{P:result1}) at almost every 
point the blow-up will give $n$ distinct tangent lines. 
If one also assumes the Lipschitz differentiability space to be doubling, then one can find $n$ distinct tangent
lines at almost every point of the tangent space.

\begin{theorem}[Theorem \ref{T:curves}]\label{T:1}
Let $(X,\sfd,\mm)$ be a doubling Lipschitz differentiability space and $(U,\f)$ be an $n$-dimensional chart. 
Then for $\mm$-almost every $\bar x \in U$, there exist $v_{1}, \dots, v_{n} \in \R^{n}$ linearly independent such that 
for any element $(X_{\infty},\sfd_{\infty},\bar x_{\infty}) \in \Tan(X, \sfd,\bar x)$  and 
for each $z \in X_{\infty}$ there exist 
$\iota^{z}_{1}, \dots, \iota^{z}_{n} \colon \R \to X_{\infty}$ so that \medskip
\begin{itemize}
	\item[i)] 	$\iota^{z}_{j}(0) = z$, for any $j = 1,\dots, n$; \\
	\item[ii)] 	$\sfd_{\infty}(\iota^{z}_{j}(t),\iota^{z}_{j}(s)) = |t - s|$, for any $j = 1,\dots, n$, for all $s,t \in \R$; \\
	\item[iii)] 	$\sfd_{\infty}(\iota^{z}_{j}(t),\iota^{z}_{k}(t)) \geq C |t| \cdot |v_{j} - v_{k}|$, for any $j, k = 1,\dots, n$, for all $t \in \R$; \medskip
\end{itemize}
for some positive constant $C$. For each $z \in X_{\infty}$, each line $\iota^{z}_{i}$ is obtained as the blow-up of a Lipschitz curve, with the blow-up depending on $z$.
\end{theorem}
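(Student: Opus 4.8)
The plan is to bootstrap Proposition \ref{P:result1}, which already produces the $n$ linearly independent directions $v_{1},\dots,v_{n}$ and the $n$ tangent lines \emph{through the base point} $\bar x_{\infty}$, upgrading it to a statement valid at \emph{every} $z \in X_{\infty}$. The only content beyond Proposition \ref{P:result1} is the clause ``for each $z \in X_{\infty}$'', and this is precisely where the doubling hypothesis is used: it supplies the local compactness of the limit needed to blow up a moving family of curves, and the Lebesgue differentiation theorem needed to relocate base points. First I would fix the full-measure set $G \subseteq U$ of points at which the conclusion of Proposition \ref{P:result1} holds; at each $\bar x \in G$ there are $n$ independent Alberti representations (Bate, \cite{bate:measurelip}) with linearly independent directions $v_{1}(\bar x),\dots,v_{n}(\bar x) \in \R^{n}$, and for each $j$ a biLipschitz curve $\gamma_{j}$ with $\bar x \in \dom(\gamma_{j})$ a density-one point of metric differentiability whose blow-up is a unit-speed line through $\bar x_{\infty}$ in direction $v_{j}(\bar x)$. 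Since $\mm$ is doubling and the direction fields $v_{j}(\cdot)$ are Borel (the underlying metric-differentiability data being measurable by Lemma \ref{L:Borel}), I would pass to the subset $G' \subseteq G$ of density-one points of $G$ at which each $v_{j}(\cdot)$ is approximately continuous; $G'$ still has full measure. For $\bar x \in G'$ I set $v_{j} := v_{j}(\bar x)$, and their linear independence is inherited from that of the Alberti representations.

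Next I would fix $\bar x \in G'$, a tangent $(X_{\infty},\sfd_{\infty},\bar x_{\infty}) = \lim_{i}(X,\sfd/r_{i},\bar x) \in \Tan(X,\sfd,\bar x)$, and a point $z \in X_{\infty}$, and set $R := \sfd_{\infty}(z,\bar x_{\infty})$. Writing $z$ as the limit under the rescaling maps of points $x_{i} \in X$ with $\sfd(x_{i},\bar x) = (R + o(1))\,r_{i}$, I would relocate the base points into $G$: for each $\varepsilon > 0$ the doubling inequality bounds $\mm(B(x_{i},\varepsilon r_{i}))$ below by a fixed fraction of $\mm(B(\bar x,(R+\varepsilon)r_{i}))$, while density-one of $G$ at $\bar x$ forces $\mm(G^{c}\cap B(\bar x,(R+\varepsilon)r_{i}))$ to be a vanishing fraction of the same ball; hence for large $i$ the ball $B(x_{i},\varepsilon r_{i})$ meets $G$. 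Diagonalizing over $\varepsilon \to 0$ yields points $x_{i}' \in G$ with $\sfd(x_{i}',x_{i}) = o(r_{i})$, so $x_{i}'$ still converges to $z$ under the rescaling. For each such $x_{i}'$ the $j$-th Alberti representation provides a good curve $\gamma_{j}^{(i)}$ through $x_{i}'$ with direction $v_{j}(x_{i}')$, and approximate continuity of $v_{j}(\cdot)$ together with $x_{i}' \to \bar x$ gives $v_{j}(x_{i}') \to v_{j}$.

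The final and decisive step is a diagonal blow-up along the \emph{fixed} scales $r_{i}$ that define $X_{\infty}$. Rescaling and reparametrizing each $\gamma_{j}^{(i)}$ to unit speed, I would extract by the Arzel\`a--Ascoli theorem in the locally compact limit $X_{\infty}$ (local compactness being exactly the surviving doubling property in the limit) a curve $\iota_{j}^{z} \colon \R \to X_{\infty}$. Item i) holds because each $\gamma_{j}^{(i)}$ passes through $x_{i}' \to z$; item ii) is the diagonal refinement of Proposition \ref{P:generaline}: each $\gamma_{j}^{(i)}$ has metric derivative $1$ at $x_{i}'$, and passing this to the limit forces $\sfd_{\infty}(\iota_{j}^{z}(t),\iota_{j}^{z}(s)) = |t-s|$; item iii) comes from the biLipschitz chart $\f$, which moves along $\gamma_{j}^{(i)}$ in direction $v_{j}(x_{i}')$ and along $\gamma_{k}^{(i)}$ in direction $v_{k}(x_{i}')$, so that the separation dominates $C|t|\,|v_{j}(x_{i}') - v_{k}(x_{i}')|$ and this bound survives the limit, yielding $\sfd_{\infty}(\iota_{j}^{z}(t),\iota_{k}^{z}(t)) \ge C|t|\,|v_{j}-v_{k}|$. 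Since a doubling space is separable, the construction may be run for each $z$ separately (and for each choice of scales $r_{i}$), matching the statement that the blow-up depends on $z$.

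I expect the genuine obstacle to be exactly this last diagonal blow-up. Proposition \ref{P:generaline} controls a \emph{single} curve at a \emph{single} point, whereas here both the base points $x_{i}'$ and the curves $\gamma_{j}^{(i)}$ vary with $i$. The argument must therefore be made uniform along the chosen scales: one has to check that the metric-derivative-one property and the directional lower bound in item iii) hold with constants independent of $i$, so that the Arzel\`a--Ascoli limit is a genuine isometric line through $z$ rather than a slower, degenerate, or broken object, and so that the limiting directions are the \emph{same} $v_{j}$ for every $z$. This is where the Borel measurability from Lemma \ref{L:Borel}, the approximate continuity of $v_{j}(\cdot)$, and the density-point reduction are combined to force the required uniformity.
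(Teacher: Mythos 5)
Your reduction to Proposition \ref{P:result1} is the right starting point, but the way you relocate the base point has a genuine gap --- the very one you flag in your last paragraph without resolving it. Your plan blows up the \emph{moving} curves $\gamma_j^{(i)}$ through moving points $x_i' \to z$ along the \emph{fixed} scales $r_i$. For the Arzel\`a--Ascoli limit to be an isometric line through $z$, you need the metric differentiability of $\gamma_j^{(i)}$ at $x_i'$ (and the density-one property of its domain) to hold quantitatively at the specific scale $r_i$, uniformly in $i$. But metric differentiability is a pointwise a.e.\ property: for each point $x'$ and each $\epsilon>0$ it only provides a radius $r_\epsilon(x')>0$ below which the curve is $\epsilon$-close to a line, and $x' \mapsto r_\epsilon(x')$ is merely measurable; nothing prevents $r_\epsilon(x_i')$ from collapsing to zero much faster than $r_i$, in which case your limit curve can be degenerate or broken. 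Approximate continuity of the direction fields $v_j(\cdot)$ and the Borel measurability of Lemma \ref{L:Borel} control the limiting \emph{directions}, not this modulus, so they cannot ``force the required uniformity'' as you assert. To complete your route one would need an Egorov-type uniformization (uniform moduli of metric differentiability and of density on sets of nearly full measure, a density argument placing the $x_i'$ inside those sets, and a diagonal over the uniformization parameter), together with a measurable selection of curves with uniform biLipschitz constants from the Alberti representations; none of this is in the proposal.

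The paper's proof avoids all of this with one stroke, and this is the idea your proposal is missing: Le Donne's theorem (Theorem \ref{thm:Enrico}) states that, for $\mm$-a.e.\ $\bar x$ in a doubling space, for every $(X_\infty,\sfd_\infty,\bar x_\infty)\in\Tan(X,\sfd,\bar x)$ and every $z \in X_\infty$, the re-pointed space $(X_\infty,\sfd_\infty,z)$ is \emph{again} an element of $\Tan(X,\sfd,\bar x)$, i.e.\ it arises as a blow-up at $\bar x$ along a different sequence of scales. Hence Corollary \ref{C:resume} (i.e.\ Proposition \ref{P:generaline} applied to the \emph{same} curves $\gamma_1^{\bar x},\dots,\gamma_n^{\bar x}$ of Proposition \ref{P:differentiable1}) applies verbatim to this new tangent element, producing lines through its base point --- which is $z$ --- with the same directions $v_j = \bigl(\f\circ\gamma_j^{\bar x}\bigr)'(0)$ and the same constant $C=1/L$; this is exactly the mechanism of Corollary \ref{C:differentpoint}, and one only has to intersect the two full-measure sets. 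Note that this also matches the final clause of the statement: the Lipschitz curves are fixed and the blow-up (the scale sequence) depends on $z$, whereas in your construction it is the other way around.
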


The question is then how and what kind of subspace of the tangent space these tangent lines form.
Since the Heisenberg group is a Lipschitz differentiability space and
purely 2-unrectifiable \cite{AK2000}, we know that the tangent lines do not always
span an $n$-rectifiable set. However under the additional assumption
that the space is Ahlfors $n$-regular with $n$ being the dimension of the chart, at almost every point there is a
tangent space biLipschitz equivalent to $\R^n$, see \cite{david}.
We are interested in finding other conditions that would provide information on the tangents.
\medskip

Our considerations originate from the study of another class of nice metric measure spaces - namely of those
with Ricci curvature lower bounds. There are many notions of Ricci curvature
lower bounds on metric measure spaces.
For the most strict one, the $\mathsf{RCD}^{*}(K,N)$ spaces (defined in \cite{AGS11b,AmbrosioGigliMondinoRajala12,EKS,AMS}),
it is known that they infinitesimally look like Euclidean spaces, \cite{GMR,mondino:tangent}.
Moreover, the tangents in an $\mathsf{RCD}^{*}(K,N)$ space are almost everywhere spanned by the tangent lines obtained from the
Lipschitz charts as described above, see Section \ref{sec:split} for details.
Thus the infinitesimal structure of $\mathsf{RCD}^{*}(K,N)$ spaces is already well understood.

We would like to understand the structure of spaces with Ricci curvature lower bounds with the more general definitions.
Most of the definitions are known to imply doubling condition 
on the measure and a local Poincar\'e inequality. Thus these spaces are Lipschitz differentiability spaces and Theorem \ref{T:1} holds.
One line of investigation is to continue from the proof in \cite{GMR}. There the fact that $\mathsf{RCD}^{*}(K,N)$ spaces
have at least one Euclidean tangent space was proven
following the idea of Preiss \cite{P1987} (and its adaptation to metric spaces by Le Donne \cite{LD2011})
of iterated tangents. The proof essentially used only the fact that the tangent spaces split off any part that is isometric to $\R$.

Taking into consideration also the Lipschitz charts, the isometric splitting property implies the existence of $\R^n$
in each of the tangent at almost every point, where $n$ is again the dimension of the chart.

\begin{theorem}[Theorem \ref{T:n=k}]\label{T:2}
Suppose that $(X,\sfd,\mm)$ is a doubling Lipschitz differentiability space with the splitting of tangents property.
Let $(U,\f)$ be an $n$-dimensional chart of $(X,\sfd,\mm)$. 
Then for $\mm$-a.e. $\bar x \in U$  any $(X_{\infty}, \sfd_{\infty},\bar x_{\infty}) \in \Tan(X,\sfd,\bar x)$ is of the form
$$
(X_{\infty}^{d}\times \R^{d},\sfd^{d}_{\infty}\times | \cdot |_,(\bar x^{d}_{\infty}, 0)),
$$
with $d\geq n$. 
\end{theorem}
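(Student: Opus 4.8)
The plan is to feed the $n$ distinct tangent lines of Theorem~\ref{T:1} into the splitting of tangents property, peeling off one Euclidean factor per line by induction on the number of split directions, using the linear independence of $v_1,\dots,v_n$ to guarantee that at each stage the next line survives the previous splittings and stays transverse to the factor already extracted. First I would fix $\bar x$ in the full-measure set where the conclusion of Theorem~\ref{T:1} holds, where the splitting of tangents property holds, and where tangents of tangents are again tangents in the sense of Le~Donne~\cite{LD2011}. Fixing $X_\infty\in\Tan(X,\sfd,\bar x)$, let $v_1,\dots,v_n$ and the lines $\iota_1,\dots,\iota_n$ (based at $z=\bar x_\infty$) be as in Theorem~\ref{T:1}. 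I would also record the blow-up $\psi\colon X_\infty\to\R^n$ of the chart map $\f$, a Lipschitz map which by construction of the $v_j$ transports the directions to the geometry of $X_\infty$ via $\psi(\iota_j(t))=t\,v_j$ after normalising $\psi(\bar x_\infty)=0$.

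The core is an inductive claim: for every $k\le n$ one can write $X_\infty=Z_k\times\R^k$ as a pointed metric measure space, with the Pythagorean ($\ell^2$) product metric, where the $\R^k$ factor is spanned by $\iota_1,\dots,\iota_k$ and $\psi$ restricted to the slice $\{\bar z_k\}\times\R^k$ is affine of rank $k$ onto $\s(v_1,\dots,v_k)$. The case $k=0$ is vacuous and $k=1$ is the splitting property applied to $\iota_1$. For the inductive step write $\iota_{k+1}(t)=(a(t),b(t))\in Z_k\times\R^k$. Were $a$ constant, $\iota_{k+1}$ would lie in the slice through the basepoint and hence $t\,v_{k+1}=\psi(\iota_{k+1}(t))\in\s(v_1,\dots,v_k)$, contradicting the linear independence of the $v_j$; so $a$ is nonconstant. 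Since a globally minimising geodesic in a Pythagorean product projects to a constant-speed geodesic in each factor, $a$ reparametrises to a line of $Z_k$ through $\bar z_k$, transverse to the $\R^k$ already extracted. Splitting along this new line, and propagating the slice condition to $\s(v_1,\dots,v_{k+1})$, then yields $X_\infty=Z_{k+1}\times\R^{k+1}$. Iterating to $k=n$ gives $X_\infty=X_\infty^d\times\R^d$ with $d\ge n$, the inequality allowing for further lines in the complementary factor.

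The main obstacle is the legitimacy of the splitting in the inductive step: the splitting of tangents property is stated for tangents of $X$, whereas the new line lives in the factor $Z_k$, and one must also ensure that the resulting $\R$ combines orthogonally with the $\R^k$ already present rather than overlapping it. I would resolve the first point through the iterated-tangents principle: a tangent of $X_\infty=Z_k\times\R^k$ is again a tangent of $X$ at $\bar x$ and, being a tangent of a product, retains the $\R^k$ factor while a tangent of $Z_k$ at a point of the line still contains that line, so the splitting property applies to the complementary factor. The second point is the genuinely quantitative heart of the argument and is exactly where condition~iii) of Theorem~\ref{T:1} and the independence of $v_{k+1}$ from $v_1,\dots,v_k$ enter: independent $\psi$-directions force independent splittings, so that the $k+1$ lines produce a Euclidean factor of dimension $k+1$ rather than fewer. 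Throughout, the flexibility of Theorem~\ref{T:1} in providing lines through every $z\in X_\infty$, not merely through $\bar x_\infty$, is what lets me base the lines at the points required by the iterated tangents.
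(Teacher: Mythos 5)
Your overall strategy differs from the paper's at exactly the step that carries the weight of the proof, and that step has a genuine gap. The paper itself runs the naive induction only briefly and then explicitly abandons it (end of its Step~1): after two splittings it observes that a further line $\iota_j$ may be entirely contained in the Euclidean factor already extracted, and --- crucially --- it does \emph{not} try to rule this out line by line. Your plan is to rule it out via the inductive slice hypothesis, but the two ingredients that would make your inductive step close are asserted rather than proved. First, the ``combination of splittings'': the splitting of tangents property, applied to $X_\infty$ along the new line $s \mapsto (a(s),0)$, only yields \emph{some} isometry $X_\infty \cong Y \times \R$ with that line as the new $\R$-factor; nothing in the hypothesis guarantees that $Y$ itself still carries the previously extracted $\R^{k}$, i.e.\ that the two splittings are compatible and produce $Z_{k+1}\times\R^{k+1}$. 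You name this as your second obstacle and answer it with ``independent $\psi$-directions force independent splittings,'' which is a restatement of what must be shown, not an argument. Second, your proposed fix for the first obstacle via iterated tangents does not do what you need: a tangent of $X_\infty = Z_k \times \R^k$ at $\bar x_\infty$ is of the form $W \times \R^k$ with $W \in \Tan(Z_k,\bar z_k)$, and applying the splitting property to \emph{that} space splits a tangent of $X_\infty$, not $X_\infty$ itself --- yet the theorem is a statement about every $X_\infty \in \Tan(X,\sfd,\bar x)$ itself, so the induction cannot be transported back.

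The propagation of your slice hypothesis is a second, related gap: knowing that $\psi = u_{\f}$ is linear along each line $\iota_j$ (which the paper does prove) does not imply that $\psi$ is affine, or even has image contained in $\s(v_1,\dots,v_{k+1})$, on the enlarged Euclidean slice; a map on $\R^{k+1}$ can be linear on finitely many independent lines through the origin without being affine. This is precisely where the paper takes a different route: it accepts whatever maximal decomposition $X_\infty = X^d_\infty \times \R^d$ the (possibly stalling) splittings produce, proves that $s \mapsto u_{\f}(\iota^z_j(s))$ is linear with $u_{\f}(\iota^0_j(1)) = (\f\circ\gamma^j)'(0)$ spanning $\R^n$, extends this to lines through every $z \in X_\infty$ via Proposition~3.1 of \cite{david}, and then shows that the restriction $\bar u_{\f}$ of $u_{\f}$ to the slice $\{\bar x^d_\infty\}\times\R^d$ is a Lipschitz quotient map onto $\R^n$ by repeating the proof of Corollary~5.1 of \cite{david}; since a quotient map $\R^d \to \R^n$ forces $d \ge n$, the theorem follows without ever having to make successive splittings ``orthogonal.'' Your proposal contains no substitute for this quotient-map step, and filling your two gaps would in effect require importing exactly that machinery.
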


For the more general $\mathsf{CD}(K,N)$ spaces (see \cite{sturm:I,sturm:II,lottvillani:metric} for the definitions)
isometric splitting is impossible since already $\R^n$ with any norm and the Lebesgue measure satisfies $\mathsf{CD}(0,n)$.
On the other hand, Ohta has recently shown that a version of splitting theorem holds for Finsler manifolds \cite{Ohta}.
Such weaker versions might be enough to give some information on the infinitesimal structure. For example, if the
existence of a tangent line would always imply that the tangent could be written to be biLipschitz equivalent to a product
$\R \times Y$ for some metric space $Y$, the $n$ dimensional Lipschitz chart could result in a piece of the tangent biLipschitz
equivalent to $\R^n$.

Let us note that for the even more general notion $\mathsf{MCP}(K,N)$ of Ricci curvature lower bound (see \cite{sturm:II,Ohta2007}
for the definitions) the above splitting result does not hold even in a topological sense \cite{KR}.
Moreover, it is not known if a local Poincar\'e inequality holds in $\mathsf{MCP}(K,N)$ spaces
without the non-branching assumption, and hence we do not know if $\mathsf{MCP}(K,N)$ spaces are Lipschitz differentiability
spaces. Even more, it is known that for example the Heisenberg group satisfies the $\mathsf{MCP}(K,N)$ condition, see \cite{Juillet2009}.
Thus the tangent lines cannot biLipschitz span a part of the tangent.

\medskip
The paper is organized as follows. In Section \ref{sec:preli} we recall the notions of pointed measured Gromov-Hausdorff
convergence, tangent functions and Lipschitz differentiability spaces. In Section \ref{S:tangentline}
we define the notion of metric differentiability that we will use in this paper and show, using an identity proved by Kirchheim in \cite{kirch:diff}, 
that it agrees almost everywhere with the metric speed. We also show that at almost every point of a biLipschitz curve the
blow-up will be a tangent line. 
In Section \ref{sec:tanlipspace} we consider the blow-ups in Lipschitz differentiability space
showing that we have $n$ independent tangent lines at almost every point. In the final section, Section \ref{sec:split},
following the ideas of David and Schioppa \cite{david,schioppa}, we prove that if tangents split off tangent lines then the $n$ independent tangent lines in a Lipschitz differentiability
space span a Euclidean $\R^n$ in the tangent.
%{\color{red}Continue...}

\medskip

After the completion of this note, we learnt from Schioppa and Preiss that our approach can be used together 
with the very recent work by Cheeger, Kleiner and Schioppa \cite{CKS} to improve Theorem \ref{T:2} and show that at almost every point 
$X^{d}_{\infty} \times \R^{d} = \R^{n}$, where $n$ is the dimension of the chart.

\bigskip

%Consider a metric space $(X,\sfd)$ and $g \colon \R^n \to (X,\sfd)$. Define
%%
%$$
%MD(g,x)(u) := \lim_{r \searrow 0}\frac{1}{r}\sfd( g(x+ru), g(x))
%$$
%%
%for all $x,u \in \R^n$, whenever the limit exists. 
%The next result proved in \cite{kirch:diff}, see Theorem 2, shows that for Lipschitz functions $MD$ exists almost every where, with respect to Lebesgue measure, 
%and at almost every point where it exists, it is a semi-norm.
%
%\begin{theorem}[Kirchheim]\label{T:Kirch}
% Let $g \colon \R^n \to X$ be Lipschitz. Then, for almost every $x \in \R^n$, $MD(g,x)(\cdot)$ is a seminorm on $\R^n$ and
%%
%\begin{equation}\label{E:Kirch}
%  \sfd(g(z),g(y)) - MD(g,x)(z-y) = o(|z-x|+|z-y|).
%\end{equation}
%% 
%\end{theorem}
%
%In the case of Lipschitz curves, that is $n=1$, it is fairly easy to observe that at any point where \eqref{E:Kirch} holds true, 
%any tanget space of $(X,\sfd)$ at that point will contain an isometric copy of $\R$, a tangent line, constructed as the blow up of the curve.
%
%Nevertheless Theorem \ref{T:Kirch} does not give a criterion to check \eqref{E:Kirch} at the given point. 
%the tangent line 
%In this note, at least for the case $n=1$, 
%
%

%%%%%%%%%%%%%%%%%%%%%%%%%%%%%%%%%%%%%%%%%%%%%%%%%%%%%%
%%%%%%%%%%%%%%%%%%%%%%%%%%%%%%%%%%%%%%%%%%%%%%%%%%%%%%
%%%%%%%%%%%%%%%%%%%%%%%%%%%%%%%%%%%%%%%%%%%%%%%%%%%%%%
%%%%%%%%%%%%%%%%%%%%%%%%%%%%%%%%%%%%%%%%%%%%%%%%%%%%%%
%%%%%%%%%%%%%%%%%%%%%%%%%%%%%%%%%%%%%%%%%%%%%%%%%%%%%%
%%%%%%%%%%%%%%%%%%%%%%%%%%%%%%%%%%%%%%%%%%%%%%%%%%%%%%
%---------------PRELIMINARIES-----------------------------------------------

\section{Preliminaries}\label{sec:preli}
A metric measure space is a triple $(X,\sfd,\mm)$ where $(X,\sfd)$ is a complete and separable metric space and
$\mm$ a positive Borel measure that is also finite on bounded sets.
As the main object of our study will be proper spaces, i.e. metric spaces such that each bounded closed set is also compact, we directly incorporate 
in the definition of metric measure space also the properness assumption. Consequently $\mm$ will be a positive Radon measure.

We list here two general properties of metric measure space that we will consider during the paper.  
The metric measure space $(X,\sfd,\mm)$ is \emph{doubling} if for each $R>0$ there exists $C(R)>0$ such that
$$
0 < \mm(B_{2r}(x)) \leq C(R) \,\mm(B_{r}(x)), \qquad \text{for every } x\in X, \, r\leq R.
$$
With no loss in generality, the function $C$ can be taken non-decreasing. 
Morevover a metric measure space $(X,\sfd,\mm)$ supports a \emph{local $p$-Poincar\'e inequality} for some $p\geq1$ 
if every ball in $X$ has positive and finite measure and for every $g \in \Lip(X,\sfd) := \{ l \colon X \to \R | \, l \textrm{ is Lipschitz} \}$,
$$
\fint_{B}| g(x) - g_{B}|\,d\mm(x) \leq L r \left( \fint_{B_{rL}(x_{0})} |D g|^{p}(x)\,d\mm(x) \right)^{1/p},
$$
for some positive constant $L$, where $B = B_{r}(x_0)$ and $g_{B} = \fint_{B}g(x) \,d\mm(x)$. Here for $g \in \Lip(X,\sfd)$ we also adopt the following notation: 
$$
|D g|(x) : = \sup_{y \neq x} \frac{\sfd( g(y), g(x) ) }{\sfd(y,x)}.
$$
%

%%%%%%%%%%%%%%%%%%%%%%%%%%%%%%%%%%%%%%%%%%%%%%%
%%%--------------CONVERGENCE OF PMMS----------------------------------

\subsection{Convergence of metric measure spaces}

The standard notion of topology on equivalence classes of pointed, proper, separable metric spaces is the one induced by the
pointed Gromov-Hausdorff convergence, $pGH$-convergence in brief. This convergence can be characterized in many equivalent ways. 
We will adopt the one with $\ve$-isometries.

%Assume for the moment the existence of a pointed, locally compact, complete and separable metric space $(X_{\infty}, d_{\infty},\bar x_{\infty})$. 
%Assume also that 
%\[
%\left(X,\frac{1}{r_{i}} d, \bar x \right) \to (X_{\infty}, d_{\infty},\bar x_{\infty}), \qquad \textrm{pointed Gromov-Hausdorff},
%\]
%for some sequence $r_{i} \to 0$. 

A map $f \colon (X,\sfd_{X}) \to (Y,\sfd_{Y})$ between compact metric spaces is called an \emph{$\ve$-isometry} provided
\begin{itemize}
\item[(i)] it almost preserves distances: for all $z,w \in X$,
$$
| \sfd_{X}(z,w) - \sfd_{Y}(f(z),f(w))| \leq \ve;
$$
\item[(ii)] it is almost surjective: 
$$
\forall \ y \in Y, \ \  \exists \ x \in X \ : \quad \sfd_{Y}(f(x),y) \leq \ve.
$$
\end{itemize}
In order to deal with possibly non-compact spaces, it is customary to fix a distinguished point $\bar x \in X$ and to consider $\ve$-isometries defined 
on an increasing family of balls centered in $\bar x$. When a distinguished point is fixed, we use $(X,\sfd,\bar x)$ to denote the pointed metric space.

\begin{definition}\label{D:pGH}
A sequence $\{(X_i,\sfd_i,\bar x_i)\}_{i\in \N}$ of pointed, proper, complete metric spaces converges to a pointed, proper, 
complete metric space $(X_\infty,\sfd_\infty,\bar x_\infty)$,
$$
(X_i,\sfd_i,\bar x_i) \longrightarrow (X_{\infty}, \sfd_{\infty},\bar x_{\infty}), \qquad pGH
$$
if and only if there exist sequences of positive real numbers $\{\ve_{i}\}_{i \in \N}, \{R_{i}\}_{i \in \N}$ with $\ve_{i} \to 0$, $R_{i} \to \infty$ and a sequence of 
$\ve_{i}$-isometries, 
$$
f_{i} \colon B^{X_{i}}_{R_{i}}(\bar x_{i}) \longrightarrow B^{X_{\infty}}_{R_{i}}(\bar x_{\infty}), \qquad f_{i}(\bar x_{i}) = \bar x_{\infty}, 
$$
where $B^{X_{i}}_{R_{i}}(\bar x_{i})$ is the ball in $X_i$, centered in $\bar x$ and of radius $R_{i}$.
\end{definition}

%We will use the notation $\sfd_{pGH}$ for the $pGH$-convergence.
We also consider pointed metric measure spaces: a quadruple $(X,\sfd,\mm,\bar x)$ where $(X,\sfd,\mm)$ is a metric measure space
and $\bar x \in X$ a distinguished point.

\begin{definition}\label{D:pmGH}
A sequence $\{(X_{i},\sfd_{i},\mm_{i},\bar x_{i})\}_{i\in \N}$ of pointed metric measure spaces converges in the pointed measured Gromov-Hausdorff convergence
to a pointed metric measure space $(X_{\infty},\sfd_{\infty},\mm_{\infty},\bar x_{\infty})$ 
$$
(X_i,\sfd_i,\mm_{i},\bar x_i) \longrightarrow (X_{\infty}, \sfd_{\infty},\mm_{\infty},\bar x_{\infty}), \qquad pmGH
$$
if and only if there exist sequences of positive real numbers $\{\ve_{i}\}_{i \in \N}, \{R_{i}\}_{i \in \N}$
with $\ve_{i} \to 0$, $R_{i} \to \infty$ and a sequence of $\ve_{i}$-isometries, 
$$
f_{i} \colon B^{X_{i}}_{R_{i}}(\bar x_{i}) \longrightarrow B^{X_{\infty}}_{R_{i}}(\bar x_{\infty}), \qquad f_{i}(\bar x_{i}) = \bar x_{\infty}, 
$$
such that  
$$
\lim_{i \to \infty} \int_{X_{\infty}} \f(z) \,d(f_{i \,\sharp}\mm_{i})(z) = \int_{X_{\infty}} \f(z)\,d\mm_{\infty}(z), \qquad \forall \, \f \in C_{b}(X_{\infty}), 
$$
where $C_{b}(X_{\infty})$ stands for the space of continuous and bounded functions with compact support in $X_{\infty}$.
\end{definition}

Both, the $pGH$-convergence and the $pmGH$-convergence can be used to define and study (measured) tangent spaces.

If $(X,\sfd)$ is a metric space and $\bar x \in X$ is a distinguished point, 
then any limit point in the $pGH$-convergence of any sequence of the form $\{ (X,\sfd/r_{i},\bar x) \}_{i \in \N}$, with $r_{i} \to 0$, 
is a tangent space of $(X,\sfd)$ at $\bar x$. 
We use $\Tan(X,\sfd,\bar x)$ to denote the set of all possible tangent spaces of $(X,\sfd)$ at $\bar x$.

If $(X,\sfd,\mm)$ is a metric measure space and $\bar x \in \supp(\mm)$ is a distinguished point, for any $r>0$, the rescaled and normalized 
pointed metric measure space is defined as follows:
$$
\left( X, \frac{1}{r} \sfd,\mm^{\bar x}_{r},\bar x \right), \qquad \mm^{\bar x}_{r} : = \left( \int_{B_{r}(\bar x)} 1 - \frac{1}{r}\sfd(\bar x,z) \,d\mm(z)\right)^{-1} \mm.
$$
Then a limit point in the $pmGH$-convergence of the sequence $\{ (X,\sfd/r_{i},\mm^{\bar x}_{r_{i}},\bar x) \}_{i \in \N}$ 
is a measured tangent space of $(X,\sfd,\mm)$ at $\bar x$ and 
to denote the set of all possible measured tangent spaces of $(X,\sfd,\mm)$ at $\bar x$ we use $\Tan(X,\sfd,\mm,\bar x)$.

It is worth noticing that, thanks to compactness properties of the collection of uniformly doubling metric measure space, 
$\Tan(X,\sfd,\mm,\bar x)$ is always non empty, provided $(X,\sfd,\mm)$ is doubling.

%%%%%%%%%%%%%%%%%%%%%%%%%%%%%%%%%%%%%%%%%%%%%%%
%%%-----MOVING THE BASE POINT-------------------%%%%%%%%%%%%%%%%%%%%%

%%%%%%%%%%%%%%%%%%%%%%%%%%%%%%%%%%%%%%%%%%%%%%%
%%%--------------TANGENT FUNCTIONS----------------------------------%%%%%%%%%%%%%%

\subsection{Tangent functions}\label{Ss:tangetfunct} 
Here we recall few objects and related results presented in \cite{cheeger:lip} and in \cite{keith:diff}.

%We have used $\ve$-isometries to study measured tangent spaces, while for introducing tangent functions
%we will use recall the existence of approximate inverse.
If $(X, \sfd_{X})$ and $(Y, \sfd_{Y})$ are metric spaces and $f \colon X \to Y$ is an $\ve$-isometry, 
then there exists a $(4\ve)$-isometry $f' \colon Y \to X$ so that for all 
$x \in X$ and $y \in Y$ it holds 
$$
\sfd_{X}( f' \circ f (x) , x ) \leq 3 \ve, \qquad  \sfd_{Y}( f \circ f' (y) , y ) \leq \ve.
$$
Such a map is usually called an \emph{$\ve$-inverse} of $f$ and accordingly we will often adopt the notation $f^{-1}$ to denote it. 

Consider now any element $(X_{\infty},\sfd_{\infty},\mm_{\infty},\bar x_{\infty}) \in \Tan(X,\sfd,\mm,\bar x)$ and a sequence of $r_{i} \to 0$ such that 
$$
\left(X,\frac{1}{r_{i}}\sfd,\mm^{\bar x}_{r},\bar x\right) \longrightarrow (X_{\infty},\sfd_{\infty},\mm_{\infty},\bar x_{\infty}),\qquad  pmGH.
$$
Then to any Lipschitz function $g \colon X \to \R$ we can associate a sequence of rescaled functions centered in $\bar x$: 
$$
g_{i}(x) : = \frac{g(x) - g(\bar x)}{r_{i}}. 
$$
If $g$ is $L$-Lipschitz in $(X,\sfd)$, then so is $g_{i}$ in $(X, \sfd/r_{i})$. 
With this in mind, we say that $u_{g} \colon X_{\infty} \to \R$ is a \emph{compatible tangent function of $g$ at $x$} if
$$
\lim_{i \to \infty} g_{i}(f^{-1}_{i}(z)) = \lim_{i \to \infty} \frac{g(f^{-1}_{i} (z) )   - g(\bar x)  }{r_{i}} = u_{g}(z), \qquad \forall \, z \in X_{\infty},
$$
where $f^{-1}_{i}$ is any $\ve_{i}$-inverse of the approximate isometry $f_{i}$ given by the $pmGH$ convergence of 
$(X, \sfd/r_{i},\mm^{\bar x}_{r},\bar x)$ to $(X_{\infty}, \sfd_{\infty},\mm_{\infty},\bar x_{\infty})$. 
The term \emph{compatible} is used to underline that we used the same scaling for the distance and the function $g$.

\begin{remark}
The definition of $u_{g}$ does not depend on the choice of the sequence of the $\ve_{i}$-inverses.
Since $f_{i}$ is almost surjective, for any $z \in X_{\infty}$ and $i \in \N$ sufficiently large, there exists $x_{i} \in X$ such that 
$$
\sfd_{\infty} (f_{i}(x_{i}) , z  ) \leq \ve_{i}.
$$
One then easily observes that $|g_{i}(f^{-1}_{i}(z))  - g_{i}(f^{-1}_{i} \circ f_{i}(x_{i})) | \to 0$. 
If $f_{i}^{-1}$ and $\hat f_{i}^{-1}$ are two distinct $\ve_{i}$-inverses of 
$f_{i}$, it follows, by the triangle inequality that 
$$
\lim_{i \to \infty} \frac{1}{r_{i}} \sfd (  f^{-1}_{i} \circ f_{i}(x_{i}), \hat f^{-1}_{i} \circ f_{i}(x_{i}) ) = 0,
$$
and since $g$ is Lipschitz, it follows that $g_{i}(f^{-1}_{i}(z))$ and $g_{i}(\hat f^{-1}_{i}(z))$ have the same limit.  
\end{remark}

\medskip

Concerning the existence of compatible tangent functions, the following compactness result holds. 
\begin{lemma}\label{L:tgfunct}
Let $(X,\sfd,\mm)$ be a doubling metric measure space and a sequence $r_{i} \to 0$ such that 
$$
\left(X,\frac{1}{r_{i}}\sfd,\mm^{\bar x}_{r},\bar x \right) \longrightarrow (X_{\infty},\sfd_{\infty},\mm_{\infty},\bar x_{\infty}) \in \Tan(X,\sfd,\mm,\bar x),
$$
where the convergence is in the $pmGH$ sense. 
Fix also a countable collection $\mathcal{F}$ of uniformly Lipschitz functions defined on $X$.
Then possibly choosing a subsequence of $\{ r_{i} \}_{i \in \N}$, for each $g \in \mathcal{F}$ there exists $u_{g}$ 
compatible tangent function of $g$ at $\bar x$.
\end{lemma}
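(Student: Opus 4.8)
The plan is to run an Arzel\`a--Ascoli compactness argument on the rescaled functions pulled back to the tangent space, followed by a double diagonalization, first over a countable dense subset of $X_{\infty}$ and then over the countable family $\mathcal F$. First I would fix a single $L$-Lipschitz $g \in \mathcal F$ and introduce the pulled-back functions $h_{i} := g_{i} \circ f^{-1}_{i}$, defined on (a subset of) $B^{X_{\infty}}_{R_{i}}(\bar x_{\infty})$, where $f^{-1}_{i}$ is any $\ve_{i}$-inverse of $f_{i}$. I would then record two estimates. At the base point, since $f^{-1}_{i}(\bar x_{\infty})$ is within $3\ve_{i}$ of $\bar x$ in $\sfd/r_{i}$ by the $\ve_{i}$-inverse property and $g_{i}(\bar x) = 0$, the Lipschitz bound gives $|h_{i}(\bar x_{\infty})| \le 3 L \ve_{i} \to 0$. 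For equicontinuity, using that $g_{i}$ is $L$-Lipschitz in $\sfd/r_{i}$ (as recalled before the lemma) and that $f^{-1}_{i}$ is a $(4\ve_{i})$-isometry, one obtains for $z,w$ in the domain
$$
|h_{i}(z) - h_{i}(w)| \le L\,(\sfd/r_{i})\bigl(f^{-1}_{i}(z), f^{-1}_{i}(w)\bigr) \le L\,\bigl(\sfd_{\infty}(z,w) + 4\ve_{i}\bigr),
$$
so the $h_{i}$ are asymptotically equi-Lipschitz; combined with the base-point bound they are uniformly bounded on each fixed ball for $i$ large.

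Next I would exploit that the tangent space $X_{\infty}$, being proper, is separable, fix a countable dense set $\{z_{k}\}_{k \in \N} \subset X_{\infty}$, and note that each sequence $\{h_{i}(z_{k})\}_{i}$ is bounded for $i$ large, since $z_{k}$ eventually lies in $B^{X_{\infty}}_{R_{i}}(\bar x_{\infty})$ as $R_{i} \to \infty$. A diagonal extraction over $k$ produces a subsequence of $\{r_{i}\}$ along which $h_{i}(z_{k})$ converges to a value $u_{g}(z_{k})$ for every $k$; passing the equi-Lipschitz estimate to the limit makes $z_{k} \mapsto u_{g}(z_{k})$ an $L$-Lipschitz map on the dense set, which I extend to an $L$-Lipschitz function $u_{g}$ on all of $X_{\infty}$. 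A standard three-$\epsilon$ argument, bounding $|h_{i}(z) - u_{g}(z)|$ by approximating $z$ with a nearby $z_{k}$ and invoking the displayed estimate, then upgrades the convergence to $\lim_{i} h_{i}(z) = u_{g}(z)$ for every $z \in X_{\infty}$; that is, $u_{g}$ is a compatible tangent function of $g$ at $\bar x$.

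Finally, writing $\mathcal F = \{g_{1}, g_{2}, \dots\}$ and applying the previous construction successively, extracting a subsequence of $\{r_{i}\}$ for $g_{1}$, a further subsequence for $g_{2}$, and so on, the diagonal subsequence is a single subsequence of $\{r_{i}\}$ along which every $g \in \mathcal F$ admits a compatible tangent function $u_{g}$.

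The main obstacle I anticipate is bookkeeping rather than depth: the functions $h_{i}$ live on balls of growing radius $R_{i}$ and $f^{-1}_{i}$ is only an approximate isometry, so one must check that every fixed $z$ (and each $z_{k}$) eventually lies in the relevant domain and that the error terms $\ve_{i}$ genuinely vanish in all estimates. The independence of the resulting limit from the choice of $\ve_{i}$-inverse is already supplied by the Remark preceding the lemma, so it need not be reproved here.
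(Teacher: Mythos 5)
The paper states Lemma \ref{L:tgfunct} without proof, recalling it as known material from \cite{cheeger:lip} and \cite{keith:diff}; your Arzel\`a--Ascoli argument --- the base-point bound $|h_{i}(\bar x_{\infty})| \le 3L\ve_{i}$ from the $\ve_{i}$-inverse property, the asymptotic equi-Lipschitz estimate $|h_{i}(z)-h_{i}(w)| \le L\bigl(\sfd_{\infty}(z,w)+4\ve_{i}\bigr)$, pointwise extraction on a countable dense subset of the proper (hence separable) space $X_{\infty}$, Lipschitz extension of the limit, the three-$\epsilon$ upgrade to every $z \in X_{\infty}$, and a final diagonalization over the countable family $\mathcal{F}$ --- is precisely the standard proof of this compactness statement and is correct. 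You also correctly defer the independence of the limit from the choice of $\ve_{i}$-inverses to the Remark preceding the lemma, so nothing is missing.
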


As one might expect, tangent functions of Lipschitz functions enjoy a generalized notion of linearity. It has different names according to different authors. 
Here we follow \cite{cheeger:lip} and say that tangent functions to Lipschitz functions, 
wherever they exists, are \emph{generalized linear}, see Definition 8.1 of \cite{cheeger:lip}. 
The terminology used is justified by the fact that being generalized linear on a Euclidean space is the same as being linear in the usual sense, 
see again \cite{cheeger:lip}, Theorem 8.11.

%%%%%%%%%%%%%%%%%%%%%%%%%%%%%%%%%%%%%%%%%%%%%%%
%%%--------------DIFFERENTIABILITY SPACES----------------------------------

\subsection{Lipschitz differentiability spaces}\label{Ss:diffspaces}

%̃\\
%{\color{blue}From the paper by Kirchheim:}

Under fairly general assumption on the structure of the metric measure space, it is proved in \cite{cheeger:lip} that the space of Lipschitz functions has finite dimension in the following sense. 
%We report here few definitions taken from  and 

\begin{definition}
Let $(X,\sfd)$ be a metric space and $n \in \N$. A Borel set $U \subset X$ and a Lipschitz function $\f \colon X \to \R^{n}$ form a \emph{chart of dimension n}, $(U,\f)$, and 
a function $g \colon X \to \R$ is \emph{differentiable at $x_{0} \in U$} \emph{with respect to} $(U,\f)$ if there exists 
a unique $Dg(x_{0}) \in \R^{n}$ such that 
$$
\limsup_{x\to x_{0}} \frac{| g(x) - g(x_{0})   - Dg(x_{0})\cdot(\f(x) - \f(x_{0})) |}{\sfd(x,x_{0})} = 0.
$$
Furthermore a metric measure space $(X,\sfd,\mm)$ is called a \emph{Lipschitz differentiability space} if there exists a countable decomposition of $X$ into charts such that any Lipschitz function $g \colon X \to \R$ is differentiable at $\mm$-almost every point of every chart.
\end{definition}

A celebrated result by Cheeger \cite{cheeger:lip} on Lipschitz differentiability spaces can be summarized by the following
\begin{theorem}\label{T:cheeger}
Let $(X,\sfd,\mm)$ be a doubling metric measure space supporting a $p$-Poincar\'e inequality 
with constant $L\geq1$ for some $p\geq 1$. Then $(X, \sfd,\mm)$ is a Lipschitz differentiability space. 
\end{theorem}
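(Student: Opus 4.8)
The plan is to follow the strategy of Cheeger, later streamlined by Keith \cite{keith:diff}, reducing the statement to a purely infinitesimal comparison between two pointwise Lipschitz constants. For a Lipschitz $g \colon X \to \R$ introduce
$$
\mathrm{Lip}\,g(x)=\limsup_{r\to 0}\ \sup_{\sfd(y,x)\le r}\frac{|g(y)-g(x)|}{r},\qquad
\mathrm{lip}\,g(x)=\liminf_{r\to 0}\ \sup_{\sfd(y,x)\le r}\frac{|g(y)-g(x)|}{r}.
$$
First I would show that the local $p$-Poincar\'e inequality forces the \emph{Lip-lip inequality}: there is a constant $K$, depending only on $L$, $p$ and the doubling constant, so that $\mathrm{Lip}\,g \le K\,\mathrm{lip}\,g$ holds $\mm$-a.e.\ for every Lipschitz $g$. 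The mechanism is a telescoping argument along a chain of balls shrinking to $x$: one applies the Poincar\'e inequality on each ball to control the mean oscillation of $g$ by an average of $|Dg|$, then uses doubling and a maximal-function estimate to replace this integrated quantity, at $\mm$-a.e.\ Lebesgue-type differentiation point, by the infinitesimal quantity $\mathrm{lip}\,g(x)$.

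Second, I would establish the finite-dimensionality of the prospective differentiable structure. Using doubling one proves a uniform bound $N$ on the size of any system $\f_{1},\dots,\f_{m}$ of Lipschitz functions that is \emph{independent at $x$}, in the quantitative sense that $\mathrm{Lip}\,(\sum_i a_i\f_i)(x) \ge c\,|a|$ for all $a \in \R^{m}$. A volume-packing argument on the unit sphere of $\R^{m}$, combined with the doubling bound on $\mm$, caps $m$ by some $N$, so at $\mm$-a.e.\ $x$ there is a well-defined maximal independence dimension $n(x)$. A measurable selection and exhaustion procedure then decomposes $X$, up to $\mm$-null sets, into countably many Borel pieces on which $n(x)$ is constant, equal to some $n$, and on each of which one can select a Lipschitz $\f=(\f_{1},\dots,\f_{n})$ that is uniformly independent; restricting to the set where this independence is quantitatively uniform produces a chart $(U,\f)$ of dimension $n$ as in the Definition.

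Third, I would prove $\mm$-a.e.\ differentiability of an arbitrary Lipschitz $g$ on such a chart. Fixing $x$, choose $a \in \R^{n}$ minimizing $\mathrm{Lip}\,(g-a\cdot\f)(x)$; maximality of the selected independent system forces this minimal value to vanish at $\mm$-a.e.\ $x$, since otherwise $(g,\f_{1},\dots,\f_{n})$ would form a larger independent system, contradicting the cap $n=n(x)$. By the Lip-lip inequality of the first step, $\mathrm{Lip}\,(g-a\cdot\f)(x)=0$ upgrades the defining $\limsup$ of the derivative to $0$, so $g$ is differentiable at $x$ with $Dg(x)=a$, while uniqueness of $a$ is precisely the quantitative independence of $\f$.

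The main obstacle is the first step: extracting the Lip-lip inequality from the Poincar\'e inequality. The chaining estimate must convert the integrated control furnished by the Poincar\'e inequality into a genuinely pointwise infinitesimal comparison, and this is exactly where doubling, maximal-function bounds, and the almost-everywhere choice of good (differentiation) points all intervene. Once this comparison is in hand, the remaining steps are comparatively soft, resting on measure-theoretic selection and the linear algebra of the independence bound.
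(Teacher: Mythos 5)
First, a point of reference: the paper does not prove this statement at all; it quotes it as a celebrated theorem of Cheeger, with \cite{cheeger:lip} (and Keith \cite{keith:diff}) as the source, so your proposal has to be measured against the known proofs rather than against an argument in the text. Your outline is recognizably Keith's route: (1) deduce the Lip-lip inequality from the Poincar\'e inequality, (2) cap the size of quantitatively independent systems of Lipschitz functions, (3) obtain differentiability by minimizing $\mathrm{Lip}\,(g-a\cdot\f)(x)$ and invoking maximality. Steps (2) and (3) are structurally faithful sketches of that program, and your observation that $\mathrm{Lip}\,(g-a\cdot\f)(x)=0$ is literally the differentiability statement is correct; note, though, that both steps also consume the Lip-lip inequality and not only doubling, since the sphere-packing argument needs lower bounds on oscillation at \emph{all} sufficiently small scales, which is exactly what $\mathrm{lip}$ (as opposed to $\mathrm{Lip}$) provides.

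The genuine gap is Step 1, and it is not a technical detail. The mechanism you describe (telescoping chains of balls, Poincar\'e on each ball, maximal-function bounds, Lebesgue points) yields the standard estimate
$$
|g(y)-g(x)| \le C\, \sfd(x,y)\Bigl[ \bigl(M_{\lambda \sfd(x,y)}(\mathrm{lip}\,g)^{p}(x)\bigr)^{1/p} + \bigl(M_{\lambda \sfd(x,y)}(\mathrm{lip}\,g)^{p}(y)\bigr)^{1/p} \Bigr],
$$
where $M_{r}$ denotes the restricted maximal operator and one uses that $\mathrm{lip}\,g$ is an upper gradient of the Lipschitz function $g$. At a Lebesgue point $x$ of $(\mathrm{lip}\,g)^{p}$ the first term indeed tends to $\mathrm{lip}\,g(x)$; but $\mathrm{Lip}\,g(x)$ requires a supremum over $y\in B_{r}(x)$, and the second term --- a maximal function centered at the \emph{varying} point $y$ --- need not be small uniformly in $y$ near $x$: a thin spike of $\mathrm{lip}\,g$ concentrated near some $y$ at distance $r/2$ from $x$ keeps it large, no matter how good a differentiation point $x$ is. So the chaining argument alone does not convert the integrated Poincar\'e control into the pointwise comparison $\mathrm{Lip}\,g\le K\,\mathrm{lip}\,g$ a.e. Closing this gap is precisely the deep content of Theorem 6.1 of \cite{cheeger:lip} (that $\mathrm{lip}\,g = \mathrm{Lip}\,g = $ the minimal generalized upper gradient $\mm$-a.e.), whose proof runs through the Sobolev-space theory of generalized upper gradients and a blow-up argument via asymptotic generalized linearity; this is also why the Lip-lip property of PI spaces is quoted from Cheeger in \cite{keith:diff} rather than re-derived there. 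As written, your argument proves the theorem only modulo this result, which is the heart of the matter.
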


Subsequently in \cite{bate:measurelip} a finer analysis on curves, and their possible directions with respect to a given chart, was carried on. 
Here we report only the main statement. 
We use $\Gamma(X)$ to denote the set of biLipschitz maps
$$
\gamma \colon \dom(\gamma) \to X,
$$
with $\dom(\gamma) \subset \R$ non-empty and compact.

\begin{theorem}[\cite{bate:measurelip}, Theorem 6.6, Corollary 6.7]\label{T:bate}
Let $(X,\sfd,\mm)$ be a Lipschitz differentiability space and $(U,\f)$ an $n$-dimensional chart. 
Then for $\mm$-a.e. $x \in U$, there exist $\gamma_{1}^{x},\dots, \gamma_{n}^{x} \in \Gamma(X)$
such that each 
\begin{itemize}
\item[i)] $(\gamma_{i}^{x})^{-1}(x) = 0$ is a point of density one of $(\gamma_{i}^{x})^{-1}(U)$;  
\item[ii)] $(\f \circ \gamma_{i}^{x})'(0)$ are linearly independent.
\end{itemize}
Moreover, for any such $\gamma_{i}^{x}$, for any Lipschitz $g \colon X \to \R$ and $\mm$-a.e. $x \in U$, the gradient of $g$ at $x$ with respect to $\f$ 
and $\gamma_{1}^{x},\dots, \gamma_{n}^{x}$ equals $Dg(x)$, 
that is 
\[
\left(g \circ \gamma_{i}^{x}\right)'(0) = Dg(x)\cdot \left(\f\circ \gamma_{i}^{x}\right)'(0), \quad \mm-a.e.\, x \in U,
\]
for $i =1,\dots, n$.
\end{theorem}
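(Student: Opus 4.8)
The plan is to deduce Theorem~\ref{T:bate} from the structure theory of \emph{Alberti representations} of the measure $\mm$. Recall that an Alberti representation of $\mm|_{U}$ is a way of writing
\[
\mm|_{U} = \int_{\Gamma(X)} \mu_{\gamma}\, dP(\gamma),
\]
where $P$ is a probability measure on $\Gamma(X)$ and, for $P$-a.e.\ $\gamma$, the measure $\mu_{\gamma}$ is carried by the image $\gamma(\dom(\gamma))$ and is absolutely continuous with respect to $\haus^{1}$ along the curve. For such a representation one may speak of its $\f$-\emph{direction} at a point: for $P$-a.e.\ $\gamma$ and a.e.\ $t \in \dom(\gamma)$ the derivative $(\f \circ \gamma)'(t) \in \R^{n}$ exists, and we call $n$ Alberti representations $\f$-\emph{independent} if there is $\delta > 0$ such that, at a.e.\ point, one can select one direction from each representation so that the resulting $n$ vectors span a parallelepiped of volume at least $\delta$. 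The theorem follows once we produce $n$ such $\f$-independent representations of $\mm|_{U}$: for $\mm$-a.e.\ $x$ each representation places $x$ on a curve fragment $\gamma_{i}^{x}$ with $0$ a density-one point of $(\gamma_{i}^{x})^{-1}(U)$ and with $(\f \circ \gamma_{i}^{x})'(0)$ controlled, and quantitative independence yields (ii).

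I would first note that $\mm|_{U}$ admits at least one Alberti representation with positive $\f$-speed, obtained by maximising the speed in a fixed direction $w \in S^{n-1}$ over all admissible representations and invoking the differentiability of the single coordinate $w \cdot \f$ to see the maximum is positive. The $n$ independent representations are then assembled by a greedy induction in direction space. Suppose that on a positive-measure set $U' \subset U$ we have only $k < n$ $\f$-independent representations, whose directions span a $k$-dimensional subspace $V(x) \subset \R^{n}$ at a.e.\ $x \in U'$. The key claim is that one can produce a further representation of $\mm|_{U''}$, for some positive-measure $U'' \subset U'$, whose $\f$-direction is transverse to $V$. If this were impossible, then every Alberti representation of every positive-measure piece of $U'$ would have its directions confined to a fixed proper subspace; choosing $w$ orthogonal to that subspace and setting $g = w \cdot \f$, we would have $(g \circ \gamma)' = w \cdot (\f \circ \gamma)' = 0$ along $P$-a.e.\ curve of every representation, forcing the Alberti upper gradient of $g$ to vanish and hence $|Dg| = 0$ a.e.\ on $U'$. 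This contradicts differentiability, which forces $D(w \cdot \f) = w \ne 0$. Iterating until $k = n$ and exhausting $U$ by countably many such sets gives the $n$ representations $\mm$-a.e.\ on $U$.

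The hard part is the transversality step, that is, the actual \emph{construction} of a representation in a new direction rather than the soft contradiction above. This is the technical heart of the argument: one fixes a target direction $w$ transverse to $V$ and, among curve fragments whose $\f$-image moves in the $w$-direction, runs a Vitali-type covering and gluing procedure to assemble enough fragments to carry a positive fraction of $\mm|_{U''}$, controlling overlaps so that the pieces patch into a genuine Alberti representation with $\f$-direction close to $w$. Establishing that the directional speed in the direction $w$ can be kept uniformly positive -- so that the new representation is truly independent of the previous $k$ -- is where the differentiability hypothesis is used most delicately, through the interplay between the pointwise Lipschitz constant of $w \cdot \f$ and the speed of the constructed fragments.

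Finally, for the ``moreover'' statement, fix a Lipschitz $g \colon X \to \R$ and a point $x$ at which $g$ is differentiable with respect to $(U,\f)$ and at which, for each $i$, both $g \circ \gamma_{i}^{x}$ and $\f \circ \gamma_{i}^{x}$ are differentiable at $0$; this holds for $\mm$-a.e.\ $x \in U$. Since $\gamma_{i}^{x}$ is biLipschitz we have $\sfd(\gamma_{i}^{x}(t), x) \asymp |t|$, and since $0$ is a density-one point of $(\gamma_{i}^{x})^{-1}(U)$ the curve stays in $U$ for a density-one set of small $t$, so the differentiability estimate
\[
\frac{\bigl| g(\gamma_{i}^{x}(t)) - g(x) - Dg(x)\cdot(\f(\gamma_{i}^{x}(t)) - \f(x)) \bigr|}{\sfd(\gamma_{i}^{x}(t), x)} \longrightarrow 0
\]
may be divided by $t$ and passed to the limit $t \to 0$ along that set. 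Combined with the chain rule for $g \circ \gamma_{i}^{x}$ and $\f \circ \gamma_{i}^{x}$ this yields exactly $(g \circ \gamma_{i}^{x})'(0) = Dg(x) \cdot (\f \circ \gamma_{i}^{x})'(0)$, as claimed.
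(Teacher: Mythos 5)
First, a point of comparison: the paper does not prove this statement at all --- Theorem \ref{T:bate} is imported verbatim from Bate \cite{bate:measurelip} (Theorem 6.6 and Corollary 6.7), with the paper explicitly saying it reports ``only the main statement.'' So the only benchmark is Bate's original argument, whose high-level architecture your sketch does reproduce correctly: decompose $\mm|_{U}$ into pieces carrying Alberti representations, increase the number of $\f$-independent representations by an induction on the spanned direction space, and obtain the ``moreover'' identity by a chain-rule argument. Your final paragraph (dividing the differentiability estimate by $t$ using the biLipschitz bound $\sfd(\gamma_{i}^{x}(t),x)\asymp |t|$ and the density-one property of the domain) is correct and is essentially how Bate's Corollary 6.7 goes, modulo the Fubini-type step over the representation needed to know that $g\circ\gamma_{i}^{x}$ is differentiable at $0$ for $\mm$-a.e.\ $x$.

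However, there is a genuine gap at the decisive step, and the ``soft contradiction'' you offer in its place is invalid. Suppose all directions of the existing representations lie in a proper subspace $V$ and $w\perp V$; set $g=w\cdot\f$. The vanishing of $(g\circ\gamma)'$ along $P$-a.e.\ curve is then perfectly consistent with $Dg=w\neq 0$: the chain rule $(g\circ\gamma)'(0)=Dg(x)\cdot(\f\circ\gamma)'(0)$ only yields $Dg(x)\cdot v=0$ for $v\in V(x)$, which is no contradiction at all. The inference ``zero derivative along every curve of every representation, hence zero Alberti upper gradient, hence $|Dg|=0$ a.e.'' is a non sequitur --- passing from directional information along curve fragments to pointwise Lipschitz or derivative bounds is precisely the content of Bate's hard construction, not a formal consequence of the definitions. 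In Bate's proof the contradiction is reached differently: assuming no positive-measure subset admits a representation with directions in a prescribed cone around $w$, one \emph{constructs} a Lipschitz function whose pointwise Lipschitz constant is bounded below on a positive-measure set while its derivative along all available fragments is arbitrarily small, and such a function cannot be differentiable with respect to the chart. You do acknowledge that the ``hard part'' is constructing a representation in a new direction, but you give only a one-sentence gesture at a Vitali-type gluing; moreover, even your starting point --- obtaining one representation by ``maximising the speed over all admissible representations'' --- presupposes that the set of Alberti representations is non-empty, which itself requires the same construction. So the proposal is a correct plan matching Bate's strategy, but the two steps that carry the entire weight (existence of representations, and the transversality refinement) are missing, and the substitute argument supplied for the latter would fail as stated.
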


Hence not only the space of Lipschitz functions has locally finite dimension but also each Lipschitz function 
is locally described in terms of directional derivative with respect to 
a family of biLipschitz curves.

Here also Keith's results on coordinate functions is worth mentioning: in \cite{keith:diff} it is proved that 
the role of coordinate map $\f$ in chart $(U,\f)$ can be played by distance functions from a well prepared set. We report here Theorem 2.7 of \cite{keith:diff}. 

\begin{theorem}
Let $(X,\sfd,\mm)$ be a complete and separable metric measure space admitting a $p$-Poincar\'e inequality with $\mm$ doubling. 
Then there exists a measurable differentiable structure $\{(U_{i},\f_{i})\}_{i\in \N}$ such that each $\f_{i} : U_{i} \to \R^{d(i)}$ is of the form 
$$
\f_{i}(z) = \left( \sfd(z,x_{1}), \dots, \sfd(z,x_{d(i)}) \right),
$$
for some $x_{1},\dots, x_{d(i)} \in X$,
\end{theorem}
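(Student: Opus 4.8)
The plan is to deduce this from Cheeger's Theorem \ref{T:cheeger} and Bate's Theorem \ref{T:bate} by showing that the \emph{differentials of distance functions already span the whole cotangent space}, and then selecting finitely many distance functions as coordinates on each piece. First I would start from the measurable differentiable structure $\{(V_j,\psi_j)\}$ guaranteed by Theorem \ref{T:cheeger} and fix one chart $(V,\psi)$ of dimension $n$. Every distance function $d_x:=\sfd(\cdot,x)$ is $1$-Lipschitz, hence differentiable at $\mm$-a.e.\ $y\in V$ with respect to $\psi$, producing a covector $Dd_x(y)\in\R^n$ that lies in the (bounded) unit ball of Cheeger's cotangent norm. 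The whole theorem reduces to: for $\mm$-a.e.\ $y$ one can find $x_1,\dots,x_n$ with $Dd_{x_1}(y),\dots,Dd_{x_n}(y)$ linearly independent.

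The key computation is that distance functions have nonzero differential in geodesic directions. Since $(X,\sfd,\mm)$ supports a Poincar\'e inequality it is quasiconvex, so any two nearby points are joined by an essentially geodesic rectifiable curve. If $\gamma$ is such a curve with $\gamma(0)=y$, unit speed, and $x=\gamma(s)$ for small $s>0$, then $d_x(\gamma(t))=s-t$ for $t\in[0,s]$, so $(d_x\circ\gamma)'(0)=-1$; by the differentiability relation this reads
$$
Dd_x(y)\cdot(\psi\circ\gamma)'(0)=-1\neq 0 .
$$
Thus $Dd_x(y)$ pairs nontrivially with the velocity of every geodesic emanating from $y$. Combining this with Theorem \ref{T:bate}, which supplies curves $\gamma_1,\dots,\gamma_n$ through $y$ whose velocities $w_i:=(\psi\circ\gamma_i)'(0)$ are linearly independent, I would argue for spanning by contradiction: if the covectors $\{Dd_x(y)\}_x$ all lay in a proper subspace on a set of positive measure, there would be a nonzero tangent direction $v\in\R^n$ with $Dd_x(y)\cdot v=0$ for all $x$; realizing $v$ by a geodesic and applying the displayed identity produces the contradiction.

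To assemble the statement, once spanning is established I would, for a.e.\ $y\in V$, choose $x_1,\dots,x_n$ so that $\tilde\psi:=(d_{x_1},\dots,d_{x_n})$ has invertible differential at $y$; by the chain rule for differentials an invertible linear change of coordinates transfers differentiability of every Lipschitz $g$ from $\psi$ to $\tilde\psi$, so $\tilde\psi$ is again a valid chart. Reducing to a fixed countable dense set of candidate points (via the stability/generalized-linearity of tangent functions from Lemma \ref{L:tgfunct}), I would then partition $V$ into countably many Borel pieces $U_{(k_1,\dots,k_n)}$ on which one fixed tuple of distance functions serves as coordinates, yielding the required countable structure $\{(U_i,\f_i)\}$ with each $\f_i(z)=(\sfd(z,x_1),\dots,\sfd(z,x_{d(i)}))$ and $d(i)=n$.

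The main obstacle is precisely the spanning/independence step: ruling out that the distance-function differentials degenerate into a proper subspace, i.e.\ showing the distance functions genuinely detect all $n$ chart directions. The delicate point is that the annihilated direction $v$ must be realized by an honest geodesic, and that off-diagonal pairings $Dd_{x_i}(y)\cdot w_j$ need not be controlled a priori. I expect the clean way around this is to \emph{blow up at $y$}: the tangent $(X_\infty,\sfd_\infty,\bar x_\infty)$ is itself a geodesic space, the chart coordinates converge to genuinely linear tangent functions (generalized linearity becomes honest linearity), and Bate's curves blow up to $n$ independent lines along which the limiting distance functions are exactly affine of slope $-1$; working in this linearized, geodesic model is what makes the nonvanishing and independence of the pairing matrix rigorous.
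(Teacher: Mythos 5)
The paper itself contains no proof of this statement: it is quoted verbatim from Keith \cite{keith:diff}, Theorem 2.7. So your proposal can only be measured against Keith's argument and the standard fragment-based proofs (Bate--Schioppa style), not against an in-paper proof. Your high-level strategy --- prove that the differentials $Dd_x(y)$ of the distance functions $d_x:=\sfd(\cdot,x)$ span $(\R^n)^*$ at $\mm$-a.e.\ $y$, then conclude by the chain rule for an invertible change of chart and a countable Borel decomposition --- is indeed the right one, and your assembly steps at the end are fine. The genuine gap is exactly where you locate it, and neither of your proposed repairs closes it. Your key computation $(d_x\circ\gamma)'(0)=-1$ requires an honest unit-speed geodesic through $y$. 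A doubling space with a $p$-Poincar\'e inequality is only \emph{quasiconvex}, not geodesic: for instance the unit circle with the chordal metric inherited from $\R^2$ is doubling and supports a Poincar\'e inequality (it is biLipschitz to the arclength circle), yet \emph{no} two distinct points of it are joined by any geodesic. Along a $C$-quasi-geodesic with $x=\gamma(s)$ one only controls the \emph{average} slope, $\bigl(d_x(\gamma(s))-d_x(\gamma(0))\bigr)/s\le -1/C$; the derivative at the single time $t=0$ --- the only quantity that pairs with $Dd_x(y)$ through the chain rule --- is completely uncontrolled and may vanish. Likewise, ``realizing the annihilated direction $v$ by a geodesic'' is unavailable even in geodesic PI spaces: nothing guarantees a geodesic whose chart velocity is proportional to a prescribed $v\in\R^n$. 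What Theorem \ref{T:bate} (and its refinement in \cite{bate:measurelip} with $\f$-directions confined to prescribed cones) provides are biLipschitz \emph{fragments}, and for those the pointwise derivative at $t=0$ of $d_x\circ\gamma$ is again uncontrolled.

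\textbf{Why the blow-up fix fails, and what the correct mechanism is.} The blow-up is circular. After blowing up at $\bar x$, the object living on $X_\infty$ is not the distance function of $X_\infty$ but the tangent function $u_{d_x}$ of the \emph{fixed} function $d_x$, and at a point of differentiability one has exactly $u_{d_x}=Dd_x(\bar x)\cdot u_{\f}$ (rescale the defining relation of differentiability; compare Section \ref{Ss:tangetfunct} and \cite{david}). Hence the slope of $u_{d_x}$ along the blow-up line of $\gamma_j$ equals $Dd_x(\bar x)\cdot(\f\circ\gamma_j)'(0)$, which is precisely the quantity whose nonvanishing you are trying to establish; it is not ``affine of slope $-1$''. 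It can even vanish identically: on the chordal circle with $x$ antipodal to $\bar x$, the function $d_x$ has a maximum at $\bar x$, so $Dd_x(\bar x)=0$ and $u_{d_x}\equiv 0$. Moreover, geodesics of $X_\infty$ are not blow-ups of curves in $X$, so they do not pair with differentials computed in $X$ at all. The known way to close the gap replaces your pointwise derivative by an \emph{integrated} one. Suppose $Dd_x(y)\cdot v=0$ for all $x$ in a countable dense set and all $y$ in a positive-measure set $A$ (with $v$ fixed after a further decomposition). Take an Alberti representation of $\mm$ restricted to $A$ whose $\f$-directions lie in a narrow cone around $v$, and let $\gamma$ be a fragment through a density point of $A$, with biLipschitz constant $L$ and $x=\gamma(s)$ (passing to a nearby point of the dense set costs an error $2\,\sfd(x,\gamma(s))/s$). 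Then, up to errors controlled by the density of $\dom(\gamma)$ in $[0,s]$, the fundamental theorem of calculus and the a.e.\ chain rule along the fragment give
$$
-\frac{1}{L}\;\gtrsim\;\frac{d_x(\gamma(s))-d_x(\gamma(0))}{s}\;\approx\;\frac{1}{s}\int_{0}^{s} Dd_x(\gamma(t))\cdot\bigl(\f\circ\gamma\bigr)'(t)\,dt,
$$
while the integrand is small, because (on a further positive-measure subset where $|Dd_x|$ is uniformly bounded) each covector $Dd_x(\gamma(t))$ annihilates $v$ and $(\f\circ\gamma)'(t)$ lies in a thin cone around $v$. This contradiction needs no geodesics and no blow-up; it is the step missing from your sketch, and it is what makes the theorem true in the merely quasiconvex setting the hypotheses actually provide.
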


%%%%%%%%%%%%%%%%%%%%%%%%%%%%%%%%%%%%%%%%%%%%%%%%%%%%%%
%-----------GEODESICS IN PRODUCT SPACES------------------------------------

\subsection{Geodesics in product spaces}

If $(X,\sfd_{X})$ and $(Y, \sfd_{Y})$ are two metric spaces, we can consider the product distance $\sfd_{XY}$ defined by
$$
\sfd_{XY} : = \sqrt{ \sfd_{X}^{2} + \sfd_{Y}^{2} }.
$$
Then $(X\times Y, \sfd_{XY})$ is again a metric space. We recall an easy lemma on geodesics in product spaces.

\begin{lemma}\label{L:geodesicproduct}
A curve $[0,1] \ni t \mapsto (\gamma_{t}^{1}, \gamma_{t}^{2}) \in (X\times Y,\sfd_{XY})$ is a geodesic 
if and only if $\gamma^{1}$ is a geodesic in $(X,\sfd_{X})$ and $\gamma^{2}$ is a geodesic in $(Y,\sfd_{Y})$.
\end{lemma}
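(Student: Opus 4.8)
The plan is to prove both directions of the equivalence by exploiting the elementary relationship between the product distance $\sfd_{XY} = \sqrt{\sfd_X^2 + \sfd_Y^2}$ and the lengths of the projected curves, together with the characterization of geodesics as constant-speed length-minimizing curves. First I would recall that a curve $\gamma \colon [0,1] \to (Z,\sfd_Z)$ is a geodesic precisely when $\sfd_Z(\gamma_s, \gamma_t) = |t-s|\,\sfd_Z(\gamma_0, \gamma_1)$ for all $s,t$, i.e. it realizes the distance between its endpoints with constant speed. The whole proof is then a matter of translating this identity through the definition of $\sfd_{XY}$ and projecting onto the two factors.

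For the easy implication, suppose $\gamma^1$ and $\gamma^2$ are geodesics in $X$ and $Y$ respectively. Then for all $s,t \in [0,1]$ we have $\sfd_X(\gamma^1_s, \gamma^1_t) = |t-s|\,\sfd_X(\gamma^1_0, \gamma^1_1)$ and similarly for $\gamma^2$. Squaring and summing gives
$$
\sfd_{XY}^2\bigl((\gamma^1_s,\gamma^2_s),(\gamma^1_t,\gamma^2_t)\bigr) = |t-s|^2\bigl(\sfd_X(\gamma^1_0,\gamma^1_1)^2 + \sfd_Y(\gamma^2_0,\gamma^2_1)^2\bigr) = |t-s|^2\,\sfd_{XY}^2\bigl((\gamma^1_0,\gamma^2_0),(\gamma^1_1,\gamma^2_1)\bigr),
$$
so the product curve satisfies the constant-speed minimality identity and is a geodesic.

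The converse is where the only real content lies. Suppose $t \mapsto (\gamma^1_t, \gamma^2_t)$ is a geodesic in $X \times Y$. I would argue that for any intermediate time $u$, the triangle inequality in each factor gives $\sfd_X(\gamma^1_0,\gamma^1_1) \le \sfd_X(\gamma^1_0,\gamma^1_u) + \sfd_X(\gamma^1_u,\gamma^1_1)$ and likewise in $Y$. Combining these with the Minkowski (triangle) inequality for the Euclidean norm $\sqrt{a^2+b^2}$ in $\R^2$ yields
$$
\sfd_{XY}\bigl((\gamma^1_0,\gamma^2_0),(\gamma^1_1,\gamma^2_1)\bigr) \le \sfd_{XY}\bigl((\gamma^1_0,\gamma^2_0),(\gamma^1_u,\gamma^2_u)\bigr) + \sfd_{XY}\bigl((\gamma^1_u,\gamma^2_u),(\gamma^1_1,\gamma^2_1)\bigr),
$$
and since the left side equals the right side by the geodesic hypothesis, every intermediate inequality must be an equality. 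The key point, and the main obstacle to handle carefully, is that equality in the Minkowski inequality for $\sqrt{a^2+b^2}$ forces the two vectors $(\sfd_X(\gamma^1_0,\gamma^1_u),\sfd_Y(\gamma^2_0,\gamma^2_u))$ and $(\sfd_X(\gamma^1_u,\gamma^1_1),\sfd_Y(\gamma^2_u,\gamma^2_1))$ to be positively proportional, which together with the forced equalities in the two factor triangle inequalities shows that $u \mapsto \gamma^1_u$ and $u \mapsto \gamma^2_u$ each realize the distance between their endpoints. A short additional check, rescaling the parameter or testing all pairs $s,t$ rather than just $0,u,1$, then upgrades this to the constant-speed identity in each factor, establishing that $\gamma^1$ and $\gamma^2$ are geodesics. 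I expect the proportionality/equality analysis of the Minkowski inequality to be the only step requiring genuine care; everything else is a direct substitution.
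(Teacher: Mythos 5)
Your proposal is correct and rests on the same elementary inequality as the paper's proof: the equality case of Minkowski's inequality in $\R^{2}$ that you invoke is exactly the Cauchy--Schwarz inequality $(a^{2}+b^{2})(c^{2}+d^{2})\geq (ac+bd)^{2}$, which the paper records as \eqref{E:geodprod}. The difference is organizational. The paper treats only the nontrivial direction, and by contradiction: assuming $\gamma^{1}$ fails the triangle equality at a triple of times (normalized to $-s,0,s$), it expands the product-geodesic additivity identity and applies Cauchy--Schwarz to conclude that $\gamma^{2}$ would have to violate the triangle inequality in $Y$. Your argument is the direct (contrapositive) version: force equality in both factor triangle inequalities and in Minkowski, then analyze the equality case. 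What the direct version buys is precisely the two points the paper glosses over. First, you prove the easy implication explicitly. Second, and more substantively, you attend to the parametrization: with ``geodesic'' meaning constant-speed minimizer, a component could a priori satisfy every triangle equality and still fail to be a geodesic through bad parametrization, so the paper's opening reduction (``$\gamma^{1}$ is not a geodesic, hence strict triangle inequality at some triple'') is not literally justified as stated; your proportionality step closes exactly this loophole, since the nonnegative proportional vectors $(\sfd_{X}(\gamma^{1}_{0},\gamma^{1}_{u}),\sfd_{Y}(\gamma^{2}_{0},\gamma^{2}_{u}))$ and $(\sfd_{X}(\gamma^{1}_{u},\gamma^{1}_{1}),\sfd_{Y}(\gamma^{2}_{u},\gamma^{2}_{1}))$ sum to the endpoint vector, and combining this with $\sfd_{XY}(\gamma_{0},\gamma_{u})=u\,\sfd_{XY}(\gamma_{0},\gamma_{1})$ forces the proportionality factor to equal $u$, giving the constant-speed identity in each factor. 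Do write out that last upgrade in full rather than leaving it as ``a short additional check''; with it spelled out, your proof is complete and, on the parametrization point, more careful than the paper's.
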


\begin{proof}
We start with the easy inequality: for $a,b,c,d$ positive real numbers, 
\begin{equation}\label{E:geodprod}
(a^{2}+ b^{2})(c^{2} + d^{2}) \geq (bd + ac)^{2}.
\end{equation}
Then let $[0,1] \ni t \mapsto (\gamma_{t}^{1}, \gamma_{t}^{2}) \in X\times Y$ be a geodesic and suppose by contradiction that $\gamma^{1}$ is not. 
For ease of notation, we can assume that
$$
\sfd_{X} (\gamma^{1}_{-s}, \gamma^{1}_{s}) < \sfd_{X} (\gamma^{1}_{0}, \gamma^{1}_{s}) + \sfd_{X} (\gamma^{1}_{0}, \gamma^{1}_{s}),
$$
for some $s> 0$.
On the other hand 
$$
\sfd_{X}^{2} (\gamma^{1}_{-s}, \gamma^{1}_{s}) + \sfd_{Y}^{2} (\gamma^{2}_{-s}, \gamma^{2}_{s}) 
	=  \left(  \sqrt{\sfd_{X}^{2} (\gamma^{1}_{-s}, \gamma^{1}_{0}) + \sfd_{Y}^{2} (\gamma^{2}_{-s}, \gamma^{2}_{0})} 
		+  \sqrt{\sfd_{X}^{2} (\gamma^{1}_{0}, \gamma^{1}_{s}) + \sfd_{Y}^{2} (\gamma^{2}_{0}, \gamma^{2}_{s})} \right)^{2}.
$$
Expanding the squares and using that $\gamma^{1}$ is not a geodesic, we obtain that
\begin{align*}
\sfd_{Y}^{2} (\gamma^{2}_{-s}, \gamma^{2}_{s}) 
	>	&~ \sfd_{Y}^{2} (\gamma^{2}_{-s}, \gamma^{2}_{0}) + \sfd_{Y}^{2} (\gamma^{2}_{0}, \gamma^{2}_{s})  \crcr
		&~ +2  \sqrt{\sfd_{X}^{2} (\gamma^{1}_{-s}, \gamma^{1}_{0}) + \sfd_{Y}^{2} (\gamma^{2}_{-s}, \gamma^{2}_{0})} \cdot
				\sqrt{\sfd_{X}^{2} (\gamma^{1}_{0}, \gamma^{1}_{s}) + \sfd_{Y}^{2} (\gamma^{2}_{0}, \gamma^{2}_{s})} \crcr
		&~ - 2 \sfd_{X}(\gamma^{1}_{-s}, \gamma^{1}_{0}) \sfd_{X}(\gamma^{1}_{0}, \gamma^{1}_{s}).
\end{align*}
We can now use the first inequality we wrote and get 
$$
\sfd_{Y}^{2} (\gamma^{2}_{-s}, \gamma^{2}_{s}) 
		>  \sfd_{Y}^{2} (\gamma^{2}_{-s}, \gamma^{2}_{0}) + \sfd_{Y}^{2} (\gamma^{2}_{0}, \gamma^{2}_{s})  
			+ 2 \sfd_{Y}(\gamma^{2}_{-s}, \gamma^{2}_{0}) \sfd_{Y}(\gamma^{2}_{0}, \gamma^{2}_{s}),
$$
violating the triangle inequality. The claim follows.
\end{proof}

%%%%%%%%%%%%%%%%%%%%%%%%%%%%%%%%%%%%%%%%%%%%%%%%%%%%
%%%%%%%%%%%%%%%%%%%%%%%%%%%%%%%%%%%%%%%%%%%%%%%%%%%%
%%%%%%%%%%%%%%%%%%%%%%%%%%%%%%%%%%%%%%%%%%%%%%%%%%%%
%%%%%%%%%%%%%%%%%%%%%%%%%%%%%%%%%%%%%%%%%%%%%%%%%%%%
%%%%%%%%%%%%%%%%%%%%%%%%%%%%%%%%%%%%%%%%%%%%%%%%%%%%
%-------------------TANGENT LINES-----------------------------------%

\section{Tangent lines}\label{S:tangentline}

Let us start this section by recalling a result from \cite{rudin:real}, Theorem 7.10:
a more general version of Lebesgue Differentiation Theorem.
Here and in the sequel $\L^{d}$ denotes the Lebesgue measure on $\R^{d}$.

\begin{definition} 
Fix $x \in \R^{d}$ and a sequence of Borel sets $\{ E_{i}\}_{i\in \N} \subset \R^{d}$. 
We say that $\{E_{i} \}_{i\in \N}$ \emph{shrinks nicely to x} provided there exist $r_{i} >0$ and $\alpha >0$ such that 
for each $i \in \N$ we have
$$
E_{i} \subset B_{r_{i}}(x) \qquad \text{and} \qquad
\L^{d}(E_{i}) \geq \alpha  \L^{d}(B_{r_{i}}(x)).
$$
\end{definition}

For the nicely shrinking sets we have the following general version of Lebesgue Differentiation Theorem.

\begin{theorem}\label{T:diffLeb} 
Let $f \in L^{1}(\R^{d},\R)$ be any function. Associate to each $x \in \R^{d}$ a sequence $\{ E_{i}(x)\}_{i \in \N}$ of sets nicely shrinking to $x$. 
Then 
$$
f(x) = \lim_{i \to \infty} \frac{1}{\L^{d}(E_{i}(x))} \int_{E_{i}(x)} f(y) dy,
$$
for every Lebesgue point $x$ of $f$. In particular it holds for $\L^{d}$-almost every $x$.
\end{theorem}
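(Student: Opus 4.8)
The plan is to reduce everything to the notion of a \emph{Lebesgue point}: a point $x \in \R^{d}$ is a Lebesgue point of $f$ if
$$
\lim_{r \to 0} \frac{1}{\L^{d}(B_{r}(x))} \int_{B_{r}(x)} |f(y) - f(x)|\,dy = 0.
$$
The theorem then splits into two parts. First, one shows that $\L^{d}$-almost every $x$ is a Lebesgue point of $f$; this yields the final ``in particular'' assertion. Second, one shows that at any fixed Lebesgue point $x$ the averages of $f$ over the nicely shrinking sets $E_{i}(x)$ converge to $f(x)$. The second part is elementary and is where the nicely shrinking hypothesis is used; the first part carries the real analytic content.

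For the second part, I would fix a Lebesgue point $x$ and write $r_{i}, \alpha$ for the constants associated to $\{E_{i}(x)\}$, so that $E_{i}(x) \subset B_{r_{i}}(x)$, $\L^{d}(E_{i}(x)) \geq \alpha\,\L^{d}(B_{r_{i}}(x))$ and $r_{i} \to 0$ (the latter being implicit in ``shrinking to $x$''). Since $f(x)$ equals the average of the constant $f(x)$ over $E_{i}(x)$, I estimate
$$
\left| \frac{1}{\L^{d}(E_{i}(x))} \int_{E_{i}(x)} f(y)\,dy - f(x) \right| \leq \frac{1}{\L^{d}(E_{i}(x))} \int_{E_{i}(x)} |f(y) - f(x)|\,dy.
$$
Using $E_{i}(x) \subset B_{r_{i}}(x)$ to enlarge the domain of integration and the density bound $\L^{d}(E_{i}(x)) \geq \alpha\,\L^{d}(B_{r_{i}}(x))$ to replace the normalizing factor, the right-hand side is at most
$$
\frac{1}{\alpha}\cdot\frac{1}{\L^{d}(B_{r_{i}}(x))} \int_{B_{r_{i}}(x)} |f(y) - f(x)|\,dy,
$$
which tends to $0$ as $i \to \infty$ by the Lebesgue point condition, since $r_{i} \to 0$. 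Here it is crucial that $\alpha$ is independent of $i$, so the whole expression vanishes in the limit, giving the claim.

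The main obstacle is the first part: proving that $\L^{d}$-almost every point is a Lebesgue point. Here I would introduce the Hardy--Littlewood maximal function $Mf(x) = \sup_{r>0} \frac{1}{\L^{d}(B_{r}(x))}\int_{B_{r}(x)} |f|\,dy$ and invoke the weak type $(1,1)$ estimate $\L^{d}(\{Mf > \lambda\}) \leq \frac{C_{d}}{\lambda}\|f\|_{L^{1}}$, itself a consequence of a Vitali-type covering argument. Given this, I would approximate $f$ in $L^{1}$ by continuous functions $g$ (dense in $L^{1}(\R^{d})$), for which the Lebesgue point property is immediate by continuity, and then dominate the relevant limsup of $\frac{1}{\L^{d}(B_{r}(x))}\int_{B_{r}(x)}|f-f(x)|\,dy$ by $M(f-g) + |f-g|$. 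Applying the weak type estimate to $f-g$ and letting $\|f-g\|_{L^{1}} \to 0$ shows that the set where this limsup exceeds any $\lambda > 0$ is null; a countable union over $\lambda \to 0$ then finishes the argument. This maximal-function step is the only genuinely nontrivial ingredient; the rest is bookkeeping with the nicely shrinking condition.
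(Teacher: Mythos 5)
Your proof is correct, and it is essentially the same argument as in the paper's source: the paper does not prove this theorem at all, but cites it as Theorem 7.10 of Rudin's \emph{Real and complex analysis}. Both of your steps --- the elementary comparison at a Lebesgue point, enlarging $E_{i}(x)$ to $B_{r_{i}}(x)$ and using $\L^{d}(E_{i}(x)) \geq \alpha\,\L^{d}(B_{r_{i}}(x))$ with $\alpha$ independent of $i$, and the Hardy--Littlewood maximal-function/weak-$(1,1)$ argument that $\L^{d}$-almost every point is a Lebesgue point --- are precisely Rudin's Theorems 7.10 and 7.7, respectively.
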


Consider now $(X,\sfd)$ a complete, and separable metric space and note that for the next statement we do not need to assume $(X,\sfd)$ to be proper. 
%
%Consider $\gamma \in \Gamma(X)$ so that: 
%\[
%\gamma \colon [-c, c] \to X, \qquad \gamma_{0} = \bar x,
%\]
%with $t = 0$ point of metric differentiability. Since $\gamma \in \Gamma(X)$, it holds $|d\gamma_{0}|>0$.
%
%
%The natural set of curves we want to work with is $\Gamma(X)$: the set of bi-Lipschitz curves
%%
%$$
%\gamma : \textrm{Dom\,}(\gamma) \to X,
%$$
%%
%with $\textrm{Dom\,}\gamma \subset \R$ non-empty and compact. 

\begin{definition}\label{D:metricdiffer}
Let $\gamma \colon \dom(\gamma)\to X$ be any curve. We say that $\gamma$ is \emph{metric differentiable} at $t \in \dom(\gamma)$ provided the following limit 
$$
\lim _{\substack{s ,\tau \to 0 \\ nicely}}  \frac{\sfd(\gamma_{t+s}, \gamma_{t+\tau})}{|s - \tau|}
$$
exists for any sequence of $s$ and $\tau$, where with \emph{nicely} we ask for the interval with boundary formed by $t+s$ and $t+\tau$ to shrink nicely to $t$. 
In case the limit exists, we denote it with $|d\gamma|(t)$.
\end{definition}

\begin{remark}\label{R:comparespeed}
By definition, the existence of $|d\gamma|$ is a priori a more demanding property compared to existence of metric speed $|\dot \gamma|$, 
for its definition see \cite{ambrtilli:metricanal}.
Actually the two notions are different. Consider for instance the curve $\gamma\colon [-1,1 ] \to \R^{2}$ defined by 
$\gamma(t) : = (t,t)$ for $t \geq 0$ and $\gamma(t) : = (t,-t)$ for $t \leq 0$. Then the metric speed always exists and is $1$, while $|d\gamma|$ does not exists for $t =0$.
The converse trivially holds.
For curves with values in a Euclidean space, at any point of differentiability, $|d\gamma|(t_{0})$ coincides with the modulus of the derivative. 
\end{remark}

\begin{remark}\label{R:kirchh}
Another notion of differentiability for maps with values in metric spaces was introduced by Kirchheim in \cite{kirch:diff}:
for any $g \colon \R^n \to (X,\sfd)$ consider the following quantity
$$
MD(g,x)(u) := \lim_{r \searrow 0}\frac{1}{r}\sfd( g(x+ru), g(x))
$$
for all $x,u \in \R^n$, whenever the limit exists. 
In Theorem 2 of \cite{kirch:diff} it is proved that for Lipschitz functions $g$, $MD$ exists almost everywhere, with respect to Lebesgue measure, 
and at almost every point where it exists, it is a seminorm.

\begin{theorem}[Kirchheim]\label{T:Kirch}
 Let $g \colon \R^n \to X$ be Lipschitz. Then, for almost every $x \in \R^n$, $MD(g,x)(\cdot)$ is a seminorm on $\R^n$ and
\begin{equation}\label{E:Kirch}
  \sfd(g(z),g(y)) - MD(g,x)(z-y) = o(|z-x|+|z-y|).
\end{equation}
\end{theorem}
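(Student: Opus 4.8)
The plan is to reduce the metric-valued problem to countably many scalar Lipschitz functions and then invoke the classical Rademacher and Lebesgue differentiation theorems. Since $\R^n$ is separable and $g$ is continuous, the image $g(\R^n)$ is separable; fix a countable dense set $\{x_k\}_{k\in\N}\subset g(\R^n)$ and put $f_k(x):=\sfd(g(x),x_k)$. Each $f_k$ is Lipschitz with the same constant as $g$ (the maps $\sfd(\cdot,x_k)$ being $1$-Lipschitz), and the reverse triangle inequality together with density of $\{x_k\}$ yields the key identity $\sfd(g(z),g(y))=\sup_k|f_k(z)-f_k(y)|$: indeed each term is bounded by $\sfd(g(z),g(y))$, while choosing $x_k\to g(y)$ makes $f_k(z)-f_k(y)\to\sfd(g(z),g(y))$. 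By Rademacher's theorem each $f_k$ is differentiable $\L^n$-a.e.; let $A\subset\R^n$ be the full-measure set where every $f_k$ is differentiable. For $x\in A$ I set $MD(g,x)(u):=\sup_k|\nabla f_k(x)\cdot u|$. As a supremum of the seminorms $u\mapsto|\nabla f_k(x)\cdot u|$, each dominated by $\Lip(g)\,|u|$, this is automatically a finite seminorm on $\R^n$, so the seminorm assertion is built into the construction; the real work is to show it computes the limit defining $MD$ and satisfies \eqref{E:Kirch}.

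For the directional limit I fix a direction $u$ and show $\sfd(g(x+ru),g(x))/r\to MD(g,x)(u)$ for $\L^n$-a.e. $x$. The lower bound is immediate: for each $k$, $|f_k(x+ru)-f_k(x)|\le\sfd(g(x+ru),g(x))$, so dividing by $r$ and letting $r\to0$ gives $\liminf_r\sfd(g(x+ru),g(x))/r\ge|\nabla f_k(x)\cdot u|$, hence $\ge MD(g,x)(u)$ after the supremum over $k$. For the upper bound I use that each Lipschitz $f_k$ is absolutely continuous along almost every line parallel to $u$ with derivative $\nabla f_k\cdot u$; since there are only countably many $f_k$, by Fubini this holds simultaneously for all $k$ along a.e.\ such line. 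Writing $h(x):=MD(g,x)(u)$ and integrating, $\sfd(g(x+tu),g(x))=\sup_k\bigl|\int_0^t(\nabla f_k\cdot u)(x+\tau u)\,d\tau\bigr|\le\int_0^t h(x+\tau u)\,d\tau$, so at Lebesgue points of $\tau\mapsto h(x+\tau u)$ — that is, at a.e.\ $x$ — one gets $\limsup_t\sfd(g(x+tu),g(x))/t\le h(x)$. Running this over a countable dense set $D\subset S^{n-1}$ gives the limit for all $u\in D$ at a.e.\ $x$; since for fixed $r$ the map $u\mapsto\sfd(g(x+ru),g(x))/r$ and the map $u\mapsto MD(g,x)(u)$ are both $\Lip(g)$-Lipschitz and agree on the dense set $D$, the equality extends to all $u$ simultaneously.

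The essential difficulty is the two-point estimate \eqref{E:Kirch}, where the base point $x$ is fixed but both $y$ and $z$ vary. The naive route expands $f_k(z)-f_k(y)=\nabla f_k(x)\cdot(z-y)+o_k(|z-x|+|z-y|)$ and takes the supremum over $k$, giving $\sfd(g(z),g(y))\le MD(g,x)(z-y)+\sup_k o_k$; but the remainder depends on $k$ and the supremum need not preserve a uniform $o(\cdot)$, and this \emph{lack of uniformity in $k$} is the heart of the matter. I would resolve it by a covering argument on the sphere combined with a Lebesgue density reduction: cover $S^{n-1}$ by finitely many caps of radius $\ve$ centred at directions $u_1,\dots,u_M$, and use the directional result of the previous paragraph (via Egorov, or a density argument applied to the finitely many functions $x\mapsto\sfd(g(x+ru_j),g(x))/r-MD(g,x)(u_j)$) to produce, for a.e.\ $x$, a single radius $\delta$ making all $M$ directional approximations hold with error $\ve$ for $0<r<\delta$. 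The seminorm (triangle) inequality for $MD(g,x)$ together with the uniform $\Lip(g)$-Lipschitz control of both $(y,z)\mapsto\sfd(g(z),g(y))$ and $w\mapsto MD(g,x)(w)$ then lets me interpolate from the controlled net directions to an arbitrary pair $(y,z)$ with $|z-x|,|z-y|$ small, converting the finite $\ve$-net estimate into the claimed $o(|z-x|+|z-y|)$ as $\ve\to0$. The delicate point, where the argument must be executed with care, is arranging the common $\delta$ on a full-measure set and checking that refining the net as $y,z\to x$ yields a genuine $o(\cdot)$ rather than a fixed-scale error $O\bigl(\ve(|z-x|+|z-y|)\bigr)$.
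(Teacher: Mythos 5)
Your first three steps are sound: the reduction to the countable family $f_k=\sfd(g(\cdot),x_k)$, the identity $\sfd(g(z),g(y))=\sup_k|f_k(z)-f_k(y)|$, the definition $MD(g,x)(u)=\sup_k|\nabla f_k(x)\cdot u|$ (a seminorm by construction), and the verification via Rademacher, integration along lines and Lebesgue points that this seminorm computes the radial limits $\lim_{r\searrow 0}\sfd(g(x+ru),g(x))/r$ for every $u$ at a.e.\ $x$. This is in fact the same mechanism the paper uses in the one-dimensional case (Proposition \ref{P:metricdiff}); note that the paper itself never proves Theorem \ref{T:Kirch} — it cites \cite{kirch:diff} — and only establishes the curve case through Proposition \ref{P:metricdiff} together with Lemma \ref{L:Kirchcomparison}. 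The gap is in your final step, and it is not the ``delicate point'' you flag (arranging a common $\delta$, refining the net): the two-point estimate \eqref{E:Kirch} \emph{cannot} be deduced from radial information at $x$, however uniform, combined with the seminorm inequality and Lipschitz bounds. The paper's own example in Remark \ref{R:comparespeed} shows this. Take $g\colon\R\to\R^2$, $g(t)=(t,t)$ for $t\ge0$ and $g(t)=(t,-t)$ for $t\le0$, and $x=0$: then $\sfd(g(ru),g(0))=\sqrt2\,|ru|$ \emph{exactly}, for both directions $u=\pm1$ and every scale $r>0$, so all the inputs to your interpolation hold at $x=0$ with zero error, $MD(g,0)(u)=\sqrt2|u|$ is a genuine norm, and both maps you invoke are Lipschitz; yet with $y=s$, $z=-s$ one has $\sfd(g(z),g(y))=2s$ while $MD(g,0)(z-y)=2\sqrt2\,s$, a discrepancy comparable to $|z-x|+|z-y|=3s$, so \eqref{E:Kirch} fails at $0$. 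Since every ingredient your interpolation is permitted to use is satisfied there, no argument of the proposed form can be valid: your scheme purports to prove \eqref{E:Kirch} at any point where the radial net estimates hold, and that implication is false (the point $0$ above is of course a null set, consistent with the theorem, but fatal to the method).

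The missing idea is that one must apply directional estimates \emph{based at points other than} $x$. The correct argument (Kirchheim's) splits the pairs: if $|z-y|\le\ve(|z-x|+|y-x|)$, the crude bound $|\sfd(g(z),g(y))-MD(g,x)(z-y)|\le 2\Lip(g)|z-y|$ suffices, exactly as in Lemma \ref{L:Kirchcomparison}. For the remaining (non-degenerate) pairs one uses Egorov to produce sets $E_m$, exhausting $\R^n$ up to a null set, on which the directional approximation is uniform in the base point, the scale, and a finite net of directions; one then works at $x$ which is both a density point of $E_m$ and an approximate continuity point of $w\mapsto MD(g,w)$. Given $(y,z)$, density of $E_m$ at $x$ yields $w\in E_m$ with $|w-y|\le\eta|z-y|$ and $MD(g,w)$ close to $MD(g,x)$; one estimates $\sfd(g(z),g(y))$ by $\sfd(g(z),g(w))\pm\Lip(g)|w-y|$ and applies the uniform directional estimate at the base point $w$, not at $x$. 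This transfer of base point is precisely what your counterexample-proof step lacks. It is also why the one-dimensional case is easier: there, Theorem \ref{T:diffLeb} applies to the interval \emph{between} the two moving points $t+s$ and $t+\tau$ (a nicely shrinking set not containing $t$), whereas for $n\ge2$ the segment $[y,z]$ is Lebesgue-null and no differentiation theorem applies along it, so the Egorov--density-point mechanism is genuinely needed.
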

In the case of Lipschitz curves ($n=1$) the quantity $MD$ coincides with the metric speed and at any point where it exists it is also a seminorm. 
As the objective of this paper is the study of tangent lines, \eqref{E:Kirch} is the relevant identity. 
It is straightforward to observe that if \eqref{E:Kirch} holds at $t \in \dom(\gamma)$ then $t$ is a point of metric differentiability and $|d\gamma|(t) = MD(\gamma,t)$.
Also the converse implication holds. We include here a short proof for reader's convenience.

\begin{lemma}\label{L:Kirchcomparison}

Suppose a Lipschitz curve $\gamma : [ - c, c] \to X$ is metric differentiable at $0$. 
Then 
$$
\sfd(\gamma_{t}, \gamma_{s}) - |d\gamma|(0) \cdot |t - s| = o(|t| + |t - s|).
$$
\end{lemma}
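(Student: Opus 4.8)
The plan is to argue by contradiction, playing the definition of metric differentiability against the Lipschitz bound. Write $a := |d\gamma|(0)$ and let $L$ be the Lipschitz constant of $\gamma$; since every difference quotient of $\gamma$ is bounded by $L$, passing to the limit along any nicely shrinking sequence gives $a \le L$. Suppose the claimed estimate fails. Then there are $\ve_0 > 0$ and pairs $(t_n,s_n)$ with $|t_n| + |t_n - s_n| \to 0$ and
\[
|\sfd(\gamma_{t_n}, \gamma_{s_n}) - a\,|t_n - s_n|| > \ve_0\,(|t_n| + |t_n - s_n|)
\]
for every $n$. In particular $|t_n| + |t_n - s_n| > 0$ (and hence also $t_n \ne s_n$), so the ratios below are well defined.

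First I would introduce the dichotomy governed by
\[
q_n := \frac{|t_n - s_n|}{|t_n| + |t_n - s_n|} \in (0,1],
\]
and, after passing to a subsequence, assume $q_n \to q \in [0,1]$. The point is that this ratio measures exactly whether the interval with endpoints $t_n$ and $s_n$ can be regarded as shrinking nicely to $0$: setting $r_n := 2\max(|t_n|,|s_n|)$ one always has $[t_n,s_n] \subset B_{r_n}(0)$, while the lower bound $|t_n - s_n| \ge \alpha\, \L^1(B_{r_n}(0))$ in the nicely-shrinking condition holds with a uniform $\alpha$ precisely when $|t_n - s_n|$ is comparable to $\max(|t_n|,|s_n|)$, hence (using $\max(|t_n|,|s_n|) \le |t_n| + |t_n - s_n| \le 3\max(|t_n|,|s_n|)$) to $|t_n| + |t_n - s_n|$.

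In the case $q > 0$, for large $n$ we have $|t_n - s_n| \ge \tfrac{q}{2}(|t_n| + |t_n - s_n|) \ge \tfrac{q}{2}\max(|t_n|,|s_n|)$, so the intervals shrink nicely to $0$. Metric differentiability at $0$ then forces $\sfd(\gamma_{t_n}, \gamma_{s_n})/|t_n - s_n| \to a$ (the limit being the same value $a$ for every nicely shrinking sequence, by definition), whence
\[
\frac{|\sfd(\gamma_{t_n}, \gamma_{s_n}) - a\,|t_n - s_n||}{|t_n| + |t_n - s_n|} = \frac{|\sfd(\gamma_{t_n}, \gamma_{s_n}) - a\,|t_n - s_n||}{|t_n - s_n|}\cdot q_n \longrightarrow 0,
\]
contradicting the displayed inequality. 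In the complementary case $q = 0$ the interval sits far from $0$ relative to its length and metric differentiability is unavailable; here I would instead use only that both $\sfd(\gamma_{t_n}, \gamma_{s_n})$ and $a\,|t_n - s_n|$ are bounded by $L\,|t_n - s_n|$, so that the numerator is at most $2L\,|t_n - s_n|$ and
\[
\frac{|\sfd(\gamma_{t_n}, \gamma_{s_n}) - a\,|t_n - s_n||}{|t_n| + |t_n - s_n|} \le 2L\,q_n \longrightarrow 0,
\]
again a contradiction.

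The only delicate point is the bookkeeping that identifies exactly when the pair $(t_n,s_n)$ produces an interval shrinking nicely to $0$; this is the entire content of the ratio $q_n$, and it is what allows metric differentiability to be invoked in the regime $q > 0$. The regime $q = 0$ — which is precisely where the nicely-shrinking hypothesis \emph{cannot} be used — is harmless, because there the separation $|t_n - s_n|$ is negligible compared with $|t_n| + |t_n - s_n|$ and the crude Lipschitz bound already beats the required error term. I expect this case split to be the one conceptual step, with the remaining estimates being routine.
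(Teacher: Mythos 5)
Your proof is correct and follows essentially the same route as the paper's: both hinge on the dichotomy between the regime where $|t-s|$ is comparable to $|t|$ (so the intervals shrink nicely to $0$ and metric differentiability gives the quotient $\to |d\gamma|(0)$) and the regime $|t-s| \ll |t|$ (where the crude Lipschitz bound $\sfd(\gamma_{t},\gamma_{s}) - |d\gamma|(0)\,|t-s| \leq 2L|t-s|$ already beats the error term). The paper runs this as a direct $\epsilon$-argument while you package it as a contradiction with subsequences via the ratio $q_{n}$, but the two ingredients and the case split are identical.
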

\begin{proof} 
Denote the Lipschitz constant of $\gamma$ by $L$. 
Let $\epsilon > 0$ . From the metric differentiability there exists $r_{\epsilon}>0$ such that if  $\epsilon |t| <|t - s|<|t|² r_{\epsilon}$,
we have
$$
\sfd (\gamma_{t}, \gamma_{s}) - |d\gamma|(0)\cdot |t - s| \, \leq \, \epsilon |t - s|\,  < \, \epsilon |t|.
$$
On the other hand, if $0 < |t - s| < \epsilon |t|$,
we have from the Lipschitz-continuity
$$
\sfd(\gamma_{t}, \gamma_{s}) - |d\gamma|(0)\cdot |t- s| \,  \leq \, 2L |t - s| \,  <  \, 2L \epsilon |t|.
$$
The claim follows by combining the estimates.
\end{proof}

%Since Theorem \ref{T:Kirch} does not give a criterion to check if \eqref{E:Kirch} holds true or not at a given point $x$ and 
%by construction metric differentiability is a more direct condition to check, 
In this paper we prefer to analyze the properties of $|d\gamma|$ rather than \eqref{E:Kirch}.

\end{remark}

Taking advantage of Theorem \ref{T:diffLeb}, it is fairly easy to obtain the almost everywhere existence of $|d\gamma|$.

\begin{proposition}\label{P:metricdiff}
Let $\gamma \colon \dom(\gamma) \to X$ be a Lipschitz curve. 
Then metric differentiability holds $\L^{1}$-a.e. in $\dom(\gamma)$.
\end{proposition}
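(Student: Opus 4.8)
The plan is to reduce the metric-valued statement to a countable family of scalar Lipschitz functions and then apply the generalized Lebesgue Differentiation Theorem (Theorem \ref{T:diffLeb}) to their derivatives. First I would fix a countable dense set $\{x_{k}\}_{k\in\N}\subset X$ and set $g_{k}(t):=\sfd(\gamma_{t},x_{k})$ for $t\in\dom(\gamma)$. Each $g_{k}$ is Lipschitz with the same constant $L$ as $\gamma$, and the reverse triangle inequality gives $|g_{k}(a)-g_{k}(b)|\le \sfd(\gamma_{a},\gamma_{b})$, while letting $x_{k}\to\gamma_{b}$ shows the bound is saturated; hence
$$
\sfd(\gamma_{a},\gamma_{b})=\sup_{k}|g_{k}(a)-g_{k}(b)|,\qquad a,b\in\dom(\gamma).
$$
Extending each $g_{k}$ to an $L$-Lipschitz function $\bar g_{k}\colon\R\to\R$ (McShane), I would define the measurable, bounded function $V(r):=\sup_{k}|\bar g_{k}'(r)|$, which lies in $L^{\infty}\subset L^{1}_{\loc}$ and will turn out to be the value $|d\gamma|$.

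Next I would estimate the difference quotient from both sides at a common Lebesgue point $t$ of $V$ and of all the $\bar g_{k}'$ (a full-measure set, being a countable intersection of full-measure sets). For the lower bound, for each fixed $k$ absolute continuity gives $\bar g_{k}(t+s)-\bar g_{k}(t+\tau)=\int_{I}\bar g_{k}'$, where $I$ is the interval with endpoints $t+\tau,t+s$; since $t$ is a Lebesgue point of $\bar g_{k}'$ and $I$ shrinks nicely to $t$, Theorem \ref{T:diffLeb} yields $\frac{g_{k}(t+s)-g_{k}(t+\tau)}{s-\tau}\to \bar g_{k}'(t)$, so that $\liminf \frac{\sfd(\gamma_{t+s},\gamma_{t+\tau})}{|s-\tau|}\ge|\bar g_{k}'(t)|$ and, taking the supremum over $k$, this liminf is $\ge V(t)$. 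For the upper bound I would use the displayed identity together with $|\int_{I}\bar g_{k}'|\le\int_{I}|\bar g_{k}'|\le\int_{I}V$ to get
$$
\frac{\sfd(\gamma_{t+s},\gamma_{t+\tau})}{|s-\tau|}=\frac{1}{\L^{1}(I)}\sup_{k}\Bigl|\int_{I}\bar g_{k}'\Bigr|\le\frac{1}{\L^{1}(I)}\int_{I}V,
$$
and, again by Theorem \ref{T:diffLeb} at a Lebesgue point of $V$ along nicely shrinking $I$, the right-hand side tends to $V(t)$, so $\limsup\le V(t)$.

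Combining the two bounds shows that the limit in Definition \ref{D:metricdiffer} exists and equals $V(t)$ at every common Lebesgue point, hence for $\L^{1}$-a.e. $t\in\dom(\gamma)$. The one point that needs care — and which I expect to be the main obstacle — is the interchange of the supremum over $k$ with the limit in $s,\tau$: the individual quotients converge to $|\bar g_{k}'(t)|$, but a priori $\sup_{k}\lim\neq\lim\sup_{k}$. This is exactly what the sandwich above resolves, the same function $V=\sup_{k}|\bar g_{k}'|$ playing two roles, as the pointwise supremum furnishing the lower bound and as the dominating integrand furnishing the upper bound; it is here that the nicely-shrinking formulation of Theorem \ref{T:diffLeb} is indispensable, since the intervals $I$ need not be centred at $t$. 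Finally I would observe that passing to the extensions $\bar g_{k}$ is harmless, because the two endpoints $t+s,t+\tau$ lie in $\dom(\gamma)$, where $\bar g_{k}=g_{k}$, so the identity for $\sfd(\gamma_{t+s},\gamma_{t+\tau})$ is unaffected.
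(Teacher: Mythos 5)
Your proof is correct and takes essentially the same route as the paper's: both reduce to the countable family of distance functions $t\mapsto\sfd(\gamma_{t},x_{k})$, extend them to Lipschitz functions on $\R$, and sandwich the metric difference quotient between $\sup_{k}$ of the individual derivative limits (lower bound) and the average over $I$ of the same function $V=\sup_{k}|\bar g_{k}'|$ (upper bound, the paper's $p$), with the nicely-shrinking form of Theorem \ref{T:diffLeb} doing the work in both directions. The only cosmetic differences are that the paper takes its dense sequence inside the compact image $\gamma(\dom(\gamma))$ and invokes Rademacher differentiability for the lower bound, whereas you use a dense set in $X$ and Lebesgue points of $\bar g_{k}'$ together with absolute continuity.
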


The proof can be obtained already from what was said in Remark \ref{R:kirchh}. 
However, we present here an alternative proof obtained following the ideas of the proof of existence of the metric 
speed for $\L^{1}$-a.e. $t \in [0,1]$, see \cite{ambrtilli:metricanal}, Theorem 4.1.6.

\begin{proof}

{\bf Step 1.} \\
Consider $\Lambda : = \gamma(\dom(\gamma))$. 
By continuity of $\gamma$, the set $\Lambda$ is compact and we can consider a dense sequence $\{x_{n}\} \subset \Lambda$.
We define a sequence of Lipschitz function as follows: 
$$
\dom(\gamma) \ni t \mapsto \f_{n}(t) : = \sfd(\gamma_{t},x_{n}).
$$
The Lipschitz constant of $\f_{n}$ coincides with that of $\gamma$. For each $n \in \N$ we denote with $\hat \f_{n}$ a Lipschitz extension of $\f_{n}$.
We can assume $\hat \f_{n}$ to be defined on an interval, say on $(a,b)$, containing $\dom(\gamma)$. 
By Rademacher theorem, each $\hat \f_{n}$ is differentiable $\L^{1}$-a.e. and therefore we can define the following map
$$
p(t) : = \sup_{n\in \N} |\dot{\hat{\f}}_{n}(t)|,
$$
at least for almost every $t \in (a,b)$.
\medskip

{\bf Step 2.} \\
For the rest of the proof we fix $t \in \dom(\gamma)$ which is a point of differentiability of all $\f_{n}$ and a Lebesgue-point 
of $p$. 
We also fix two sequences $s_{m},\tau_{m} \to 0$ with $0 \leq s_{m} \leq \tau_{m}$ so that 
$$
t +s_{m}, t+ \tau_{m} \in \dom(\gamma), \qquad \frac{s_{m}}{\tau_{m}} \to \alpha \in [0,1).
$$
This last condition is equivalent to ask that the interval $(t+s_{m},t+\tau_{m})$ shrinks nicely to $t$. For ease of notation, $s = s_{m}, \tau = \tau_{m}$.
Then for any $n \in \N$ we have
$$
\frac{\sfd(\gamma_{t+s}, \gamma_{t+\tau})}{\tau - s} \geq \frac{ |\f_{n}(\gamma_{t+s})   - \f_{n}(\gamma_{t+\tau}) |  }{\tau - s}, 
$$
and therefore
$$
\liminf_{s,\tau \to 0} \frac{\sfd(\gamma_{t+s}, \gamma_{t+\tau})}{\tau - s} \geq |\dot \f_{n}(t)|.
$$
We can take the supremum over all $n$ without changing the left hand side of the previous inequality and obtaining on the right hand side $p(t)$.
\medskip

{\bf Step 3.} \\
Since $\{x_{n}\}_{n\in \N}$ is a dense sequence
\begin{align*}
\sfd(\gamma_{t+s},\gamma_{t+\tau}) = &~ \sup_{n\in \N} |\sfd(\gamma_{t+s},x_{n} ) - \sfd(x_{n}, \gamma_{t+\tau}) | \crcr
= &~  \sup_{n \in \N} |\f_{n}(t+s) -  \f_{n}(t+\tau) | \crcr
\leq &~ \sup_{n \in \N} \int_{s}^{\tau} |\dot{\hat{\f}}_{n}(\sigma)| \L^{1}(d\sigma) \crcr
\leq &~  \int_{t+s}^{t+\tau} p(\sigma) \L^{1}(d\sigma).
\end{align*}
By assumption $t$ is a Lebesgue-point of $p$, then
$$
\limsup_{s,\tau \to 0} \frac{\sfd(\gamma_{t+s},\gamma_{t+\tau})}{\tau -s} \leq \limsup_{s,\tau \to 0}  \frac{1}{\tau - s} \int_{t+s}^{t+\tau} p(\sigma) \L^{1}(d\sigma) = p(t),
$$
where the last identity follows from Theorem \ref{T:diffLeb}.
For $\mathcal{L}^{1}$-a.e. $t \in \dom(\gamma)$:
$$
p(t) \leq \liminf_{\substack{s ,\tau \to 0 \\ nicely}} \frac{\sfd(\gamma_{t+s},\gamma_{t+\tau})}{\tau -s} \leq 
\limsup_{\substack{s ,\tau \to 0 \\ nicely}} \frac{\sfd(\gamma_{t+s},\gamma_{t+\tau})}{\tau -s} \leq p(t), 
$$
and the claim follows.
\end{proof}

\medskip

%%%%%%%%%%%%%%%%%%%%%%%%%%%%%%%%%%%%%%%%%%%%%%%%%%%%%%
%----------------------EXISTENCE OF TANGENT LINES--------------------------------------

\subsection{Existence of Tangent lines}

From Proposition \ref{P:metricdiff} one can prove that at each point of metric differentiability the blow up of the Lipschitz curve is a tangent line.   
Note that we use the properness assumption of the base space $(X,\sfd)$ in the proof.
%Note that the properness assumption on the base space $(X,\sfd)$ is needed.

\begin{lemma}\label{L:line} 
Let $(X,\sfd)$ be a complete, proper and separable metric space. Fix $\bar x \in X$ and assume 
the existence of a pointed, proper, complete and separable metric space $(X_{\infty}, \sfd_{\infty},\bar x_{\infty}) \in \Tan(X,\sfd,\bar x)$.
Let $\gamma \in \Gamma(X)$ be such that
$$ 
\dom(\gamma) = [-c,c], \qquad  \gamma_{0} = \bar x, \qquad |d\gamma|(0)>0. 
$$
Then $(X_{\infty}, \sfd_{\infty},\bar x_{\infty})$ contains an isometric copy of $\R$, in brief a line, limit of $\gamma$.
\end{lemma}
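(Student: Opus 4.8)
The plan is to realise the line as the $pGH$-limit, under the approximate isometries defining the tangent, of a suitable rescaling of $\gamma$ centred at $0$. Write $\lambda := |d\gamma|(0) > 0$; since this quantity is defined, $\gamma$ is metric differentiable at $0$, so Lemma \ref{L:Kirchcomparison} yields
$$
\sfd(\gamma_t, \gamma_s) = \lambda\,|t - s| + o(|t| + |t - s|), \qquad t, s \to 0.
$$
Let $r_i \to 0$ and let $f_i \colon B^{X}_{R_i}(\bar x) \to B^{X_\infty}_{R_i}(\bar x_\infty)$ be the $\ve_i$-isometries (for the rescaled metric $\sfd/r_i$) realising $(X_\infty, \sfd_\infty, \bar x_\infty) \in \Tan(X, \sfd, \bar x)$, with $f_i(\bar x) = \bar x_\infty$. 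For $a \in \R$ and $i$ large enough that $r_i a/\lambda \in [-c,c]$, set $p_i(a) := f_i(\gamma_{r_i a/\lambda}) \in X_\infty$. The factor $1/\lambda$ is chosen so that the limit curve will have unit speed: indeed $\tfrac{1}{r_i}\sfd(\gamma_{r_i a/\lambda}, \bar x) \to |a|$, so for each fixed $a$ the point $p_i(a)$ eventually lies in a fixed ball of $X_\infty$.

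First I would establish the distance identity in the limit. Since $f_i$ almost preserves the rescaled distance,
$$
\Bigl| \sfd_\infty\bigl(p_i(a), p_i(b)\bigr) - \tfrac{1}{r_i}\sfd\bigl(\gamma_{r_i a/\lambda}, \gamma_{r_i b/\lambda}\bigr) \Bigr| \leq \ve_i,
$$
while inserting $t = r_i a/\lambda$, $s = r_i b/\lambda$ into the metric differentiability estimate and dividing by $r_i$ gives
$$
\tfrac{1}{r_i}\sfd\bigl(\gamma_{r_i a/\lambda}, \gamma_{r_i b/\lambda}\bigr) = |a - b| + o(1), \qquad i \to \infty.
$$
Combining the two, $\sfd_\infty(p_i(a), p_i(b)) \to |a - b|$ for all $a, b \in \R$. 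In particular the maps $a \mapsto p_i(a)$ are eventually $(\tfrac{L}{\lambda} + 1)$-Lipschitz, where $L = \Lip(\gamma)$.

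Finally, using that $X_\infty$ is proper, I would extract a convergent limit curve. Fixing a countable dense set $D \subset \R$ and applying a diagonal argument (for each $a \in D$ the points $p_i(a)$ eventually range in a fixed compact ball), I pass to a subsequence of $\{r_i\}$ --- which does not change the tangent $X_\infty$ --- along which $p_i(a)$ converges to some $\iota(a) \in X_\infty$ for every $a \in D$. By the distance identity, $\iota$ is distance-preserving on $D$, hence uniformly continuous, and so extends to a map $\iota \colon \R \to X_\infty$ with $\sfd_\infty(\iota(a), \iota(b)) = |a - b|$ for all $a, b \in \R$. Thus $\iota(\R)$ is an isometric copy of $\R$ inside $X_\infty$, obtained as the limit of $\gamma$, as claimed. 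The only delicate point is the control of the error term: one must check that the $o(|t| + |t - s|)$ in the metric differentiability estimate, evaluated at arguments of order $r_i$, becomes $o(r_i)$ and hence disappears after division by $r_i$ --- this is precisely where the positivity of $\lambda$ and the nicely shrinking nature of the estimate in Lemma \ref{L:Kirchcomparison} are used.
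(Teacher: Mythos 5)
Your proof is correct and follows essentially the same route as the paper's: rescale $\gamma$ by $r_i$, transfer distances to $X_\infty$ via the $\ve_i$-isometries, use metric differentiability at $0$ to identify the limiting distances, and extract a limit curve on a countable dense set by properness and a diagonal argument, extending by continuity. The only cosmetic differences are that you reparametrize by $1/\lambda$ to obtain a unit-speed line (the paper keeps speed $|d\gamma|(0)$ and observes the image is still isometric to $\R$) and that you invoke the quantitative estimate of Lemma \ref{L:Kirchcomparison} where the paper applies Definition \ref{D:metricdiffer} directly.
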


\begin{proof}
By assumption there exists a sequence of positive real numbers $\{r_{i} \}_{i\in \N}$ with $r_{i} \to 0$ such that 
\[
\left(X,\frac{1}{r_{i}} \sfd, \bar x \right) \to (X_{\infty}, \sfd_{\infty},\bar x_{\infty}) \qquad\text{in the }pGH\text{-convergence}.
\]
%$$
%\sfd_{pGH}\left(\left(X,\frac{1}{r_{i}} \sfd, \bar x \right), (X_{\infty}, \sfd_{\infty},\bar x_{\infty}) \right) \longrightarrow 0.
%$$
Consider the sequence of approximate isometries $f_{i} : X \to X_{\infty}$ associated to the convergence.
For any two real numbers $\delta,\eta$  and $i \in \N$ sufficiently large it holds:
$$
\left| \frac{1}{r_{i}}  \sfd(\gamma_{r_{i}\delta}, \gamma_{r_{i}\eta}) - \sfd_{\infty}(f_{i}(\gamma_{r_{i}\delta}), f_{i}(\gamma_{r_{i}\eta}) )   \right| \leq \ve_{i}.
$$
Thanks to metric differentiability
$$
\lim_{i\to \infty} \sfd_{\infty}(f_{i}(\gamma_{r_{i}\delta}), f_{i}(\gamma_{r_{i}\eta}) ) = |\delta - \eta| |d\gamma|(0).
$$
%
%We define a sequence of curves:
%\[
%\gamma^{i} \colon \left( -\frac{c}{r_{i}}, \frac{c}{r_{i}} \right)  \to X_{\infty}, \qquad \tau \to  \gamma^{i}_{\tau} : = f_{i}(\gamma_{r_{i}\tau}). 
%\]
%Then since $\gamma^{i}_{0} = \bar x_{\infty}$ and 

Since the limit space is proper, using a diagonal argument, we have convergence of $\{ f_{i}(\gamma_{r_{i}\delta}) \}_{i \in \N}$ for all rational numbers $\delta$.  
By density and there exists a curve $z \colon \R \to X_{\infty}$ such that
$$
\sfd_{\infty} (z_{\eta},z_{\delta}) = |\eta - \delta | \cdot |d\gamma|(0).
$$
It follows that $z(\R)$ is isometric to $\R$.
\end{proof}

In Lemma \ref{L:line} we have not assumed any length structure on the metric space $(X,\sfd)$. 
Hence the assumption $\dom(\gamma) = [-c,c]$ could sound a bit restrictive.
In what follows we consider $\gamma \in \Gamma(X)$ with a more general domain.

\begin{proposition}\label{P:generaline}
Let $(X,\sfd)$ be a complete, proper and separable metric space. Fix $\bar x \in X$ and assume 
the existence of a pointed, proper, complete and separable metric space $(X_{\infty}, \sfd_{\infty},\bar x_{\infty}) \in \Tan(X,\sfd,\bar x)$.
Let $\gamma \in \Gamma(X)$ be such that
$$
\gamma_{0} = \bar x, \qquad  0 \textrm{ is a point of density one of \ } \dom(\gamma), \qquad  |d\gamma|(0)>0. 
$$
Then $(X_{\infty}, \sfd_{\infty},\bar x_{\infty})$ contains a line, limit of $\gamma$.
\end{proposition}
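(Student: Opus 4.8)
The plan is to run the proof of Lemma \ref{L:line} essentially verbatim; the only genuine novelty is that $\dom(\gamma)$ need no longer be an interval, so the points $r_i\delta$ at which we evaluated $\gamma$ there may now lie outside $\dom(\gamma)$. The remedy is to replace each $r_i\delta$ by a nearby point of $\dom(\gamma)$, which is possible precisely because $0$ has density one. Fix a sequence $r_i\to 0$ realizing $(X,\sfd/r_i,\bar x)\to(X_\infty,\sfd_\infty,\bar x_\infty)$ in the $pGH$-convergence, with approximate isometries $f_i$ and errors $\ve_i\to 0$. Writing $N:=\R\setminus\dom(\gamma)$, density one at $0$ means $\L^1(N\cap(-r,r))=o(r)$ as $r\to 0$. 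I would first produce, for every rational $\delta$, points $t_i^\delta\in\dom(\gamma)$ with $t_i^\delta/r_i\to\delta$: set $t_i^0:=0$ (legitimate since $\gamma_0=\bar x$ forces $0\in\dom(\gamma)$), and for $\delta\neq 0$ put $M:=|\delta|+1$, so that $N\cap(-Mr_i,Mr_i)$ has measure $\beta_i=o(r_i)$; choosing $\rho_i$ with $\beta_i\ll\rho_i\ll r_i$ (for instance $\rho_i=\sqrt{\beta_i r_i}$), the interval $[r_i\delta-\rho_i,r_i\delta+\rho_i]\subset(-Mr_i,Mr_i)$ has length $2\rho_i>\beta_i$ and hence meets $\dom(\gamma)$; any point $t_i^\delta$ of the intersection satisfies $|t_i^\delta-r_i\delta|\le\rho_i=o(r_i)$, whence $t_i^\delta/r_i\to\delta$.

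Next I would feed these points into the metric differentiability at $0$. For fixed rationals $\delta\neq\delta'$, set $\sigma_i:=\max(|t_i^\delta|,|t_i^{\delta'}|)\to 0$. The interval with endpoints $t_i^\delta,t_i^{\delta'}$ is contained in $(-\sigma_i,\sigma_i)$ and has length $|t_i^\delta-t_i^{\delta'}|\sim|\delta-\delta'|r_i$, which is at least $\alpha\,\L^1((-\sigma_i,\sigma_i))$ for the fixed positive constant $\alpha=|\delta-\delta'|/(2\max(|\delta|,|\delta'|))$; thus $(t_i^\delta,t_i^{\delta'})_i$ is an admissible nicely shrinking sequence in the sense of Definition \ref{D:metricdiffer}. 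Since $t_i^\delta,t_i^{\delta'}\in\dom(\gamma)$, metric differentiability yields $\sfd(\gamma_{t_i^\delta},\gamma_{t_i^{\delta'}})/|t_i^\delta-t_i^{\delta'}|\to|d\gamma|(0)$, and combining with $|t_i^\delta-t_i^{\delta'}|/r_i\to|\delta-\delta'|$ gives
\[
\frac{1}{r_i}\,\sfd(\gamma_{t_i^\delta},\gamma_{t_i^{\delta'}})\longrightarrow|d\gamma|(0)\,|\delta-\delta'|.
\]

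From here the argument is identical to that of Lemma \ref{L:line}. For $i$ large each $\gamma_{t_i^\delta}$ lies in the domain of $f_i$, and $f_i(\gamma_{t_i^\delta})$ stays in a bounded region of $X_\infty$ (its distance to $\bar x_\infty$ tends to $|\delta|\,|d\gamma|(0)$); properness of $X_\infty$ together with a diagonal extraction over the countable set of rational $\delta$ then produces a subsequence along which $f_i(\gamma_{t_i^\delta})\to z_\delta$ for every rational $\delta$, with $z_0=\bar x_\infty$. The $\ve_i$-isometry property gives $\sfd_\infty(z_\delta,z_{\delta'})=\lim_i\frac{1}{r_i}\sfd(\gamma_{t_i^\delta},\gamma_{t_i^{\delta'}})=|d\gamma|(0)\,|\delta-\delta'|$, so $\delta\mapsto z_\delta$ is, up to the positive factor $|d\gamma|(0)$, an isometry on $\Q$; by uniform continuity and completeness of $X_\infty$ it extends to $z\colon\R\to X_\infty$ with $\sfd_\infty(z_\eta,z_\delta)=|d\gamma|(0)\,|\eta-\delta|$, and since $|d\gamma|(0)>0$ the image $z(\R)$ is an isometric copy of $\R$ through $\bar x_\infty$, the limit of $\gamma$.

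I expect the only real obstacle to be the first step, namely passing from the interval domain of Lemma \ref{L:line} to a merely density-one domain: one must both manufacture the approximants $t_i^\delta\in\dom(\gamma)$ and verify that the pairs $(t_i^\delta,t_i^{\delta'})$ remain nicely shrinking, so that Definition \ref{D:metricdiffer} applies directly to them. Once this is secured, no estimate beyond metric differentiability itself is needed and the remainder is routine.
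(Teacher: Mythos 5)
Your proof is correct, and it handles the crux of the proposition --- the fact that $\dom(\gamma)$ is no longer an interval --- by a genuinely different mechanism than the paper. The paper keeps the evaluation times exactly proportional to the scales: it forms, for the given sequence $r_i$, the full-measure set $I(\{r_i\})=\bigcap_n\bigcup_{i\ge n}\{\delta : r_i\delta\in\dom(\gamma)\}$, picks times $t_h$ from this set inside a countable family of balls with rational centers and shrinking radii, and extracts nested subsequences of scales so that $t_h r_{i_k}\in\dom(\gamma)$ for all large $k$; the resulting line is parametrized by the dense (not necessarily rational) set $\{t_h\}$. You instead transpose the roles: you keep every scale $r_i$ and every rational time $\delta$, and perturb the evaluation point to some $t_i^\delta\in\dom(\gamma)$ with $|t_i^\delta-r_i\delta|=o(r_i)$, which density one makes possible. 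Your route buys cleaner bookkeeping (no limsup sets, no nested extraction over a ball enumeration; the only extraction left is the diagonal one for convergence in $X_\infty$, which both proofs need) and a line parametrized by $\Q$ on the nose; what it costs, and what you correctly pay, is the verification that the perturbed pairs $(t_i^\delta,t_i^{\delta'})$ are still admissible ``nicely shrinking'' sequences for Definition \ref{D:metricdiffer}, with uniform constant comparable to $|\delta-\delta'|/(2\max(|\delta|,|\delta'|))$. The paper's exact proportionality has a minor advantage downstream, in Remark \ref{R:simultaneuslimits} and Proposition \ref{P:result1}, where the construction is run simultaneously for $n$ curves with one common set of times; but your construction would serve there as well, since the Lipschitz bounds absorb the $o(r_i)$ discrepancies between the perturbed times of the different curves. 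One cosmetic point: your choice $\rho_i=\sqrt{\beta_i r_i}$ degenerates when $\beta_i=0$; in that case $\R\setminus\dom(\gamma)$ is open and of measure zero inside $(-Mr_i,Mr_i)$, hence empty there, so $t_i^\delta=r_i\delta$ itself works --- or simply take $\rho_i=\max\bigl(\sqrt{\beta_i r_i},\,r_i/i\bigr)$.
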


\begin{proof}
Let us consider fixed sequences of positive numbers $\ve_{i} \to 0$, $R_{i} \to \infty$ and 
$$
f_{i} \colon B^{X_{i}}_{R_{i}}(\bar x) \to B^{X_{\infty}}_{R_{i}}(\bar x_{\infty}),  \qquad \ve_{i}-\textrm{isometry}, \qquad f_{i}(\bar x) = \bar x_{\infty},
$$
where $B^{X_{i}}_{R_{i}}(\bar x)$ is the ball in $(X,\sfd/r_{i})$, centered in $\bar x$ and of radius $R_{i}$. 
\medskip

{\bf Step 1.} \\
Denote with $I : = \dom(\gamma)$ and for any positive $r$ we consider $I_{r}: = \{ x \in \R : x r \in I\}$.
Consider any sequence $\r_{i} \to 0$, then for each $n$ define
$$
I(n) : = \bigcup_{i \geq n} I_{\r_{i}}.
$$
The set $\cap_{n \in \N} I(n)$ is formed by all real numbers $\delta$ such that there exists a subsequence $\r_{i_{k}}$ 
so that $\r_{i_{k}}\delta  \in \dom(\gamma)$ for all $k \in \N$. 
To underline its dependence on $\{\r_{i} \}_{i \in \N}$, we also use the following notation 
$$
I(\{\r_{i} \}) : = \bigcap_{n \in \N} I(n).
$$
We observe that for each $n\in \N$ 
$$
\mathcal{L}^{1} \left( \R \setminus I(n) \right) =0.
$$
Indeed for any $M > 0$, and $j \in \N$, $j \geq n$
$$
\left( (-M,M) \setminus \bigcup_{i\geq n} I_{\r_{i}} \right)  \subset \left( (-M,M) \setminus I_{\r_{j}} \right).
$$
Then since $0$ has density one in $I$, 
$$
\lim_{i \to \infty} \frac{\mathcal{L}^{1} (\left\{  \delta \in \R \,: \, |\delta| \leq \r_{i} M, \ \delta \notin I \right\}) } {2\r_{i} M} = 0.
$$
Since 
$$
\mathcal{L}^{1} (\left\{  \delta \in \R \,: \, |\delta| \leq \r_{i} M, \  \delta \notin I \right\})  = \r_{i} \mathcal{L}^{1} (\left\{  \delta \in \R\, : \, |\delta| \leq M,\ \r_{i} \delta \notin I \right\}), 
$$
it follows that 
$$
\lim_{j \to \infty} \mathcal{L}^{1}\left( (-M,M) \setminus I_{\r_{j}} \right) = \lim_{j \to \infty} \mathcal{L}^{1}(\left\{  \delta \in \R \, : \, |\delta| \leq M,\ \r_{j} \delta \notin I \right\}) =0.
$$
Therefore for any $M \in \R$
\[
\mathcal{L}^{1}\left( (-M,M) \setminus  I(n) \right) = 0,
\]
consequently $\mathcal{L}^{1}\left( \R \setminus  I(n) \right) = 0$ for all $n \in \N$, and finally also 
\begin{equation}\label{E:points}
\mathcal{L}^{1}\left( \R \setminus  I(\{\r_{i}\}) \right) = 0,
\end{equation}
holds. 
\medskip

{\bf Step 2.} \\
Consider now the sequence of radii $\{r_{i}\}$ for which the pointed Gromov-Hausdorff convergence holds. Consider also a sequence $\eta_{n} \to 0$ and 
an enumeration of all rational numbers $\{ q_{m} \}_{m \in \N}$. Consider now a bijection $\N \ni h \to (n(h), m(h)) \in \N \times \N$ and the 
associated family of open balls in $\R$:
$$
B_{h} : = B_{\eta_{n(h)}} (q_{m(h)}).
$$
Then from \eqref{E:points}, $B_{1} \cap I(\{r_{i} \}) \neq \emptyset$. 
Hence by definition of $I(\{r_{i} \})$, there exists $t_{1} \in B_{1}$ for which there exists a subsequence of $\{r_{i} \}_{i \in \N}$, say $\{ r_{i_{1}(k)} \}_{k\in \N}$ 
so that 
$$
t_{1} r_{i_{1}(k)} \in I = \dom(\gamma), \quad \forall \, k \in \N.
$$
In particular we can consider the sequence 
$$
\{ f_{i_{1}(k)} \circ \gamma_{t_{1} r_{i_{1}(k)}} \}_{ k\in \N} \subset X_{\infty}, 
$$
where $f_{i}$ is an $\ve_{i}$-isometry from pointed Gromov-Hausdorff convergence. 
Then since the aforementioned sequence stays in a bounded neighborhood of $\bar x_{\infty}$ and $(X_{\infty}, \sfd_{\infty})$ is proper, 
there exists a subsequence of $\{t_{1} r_{i_{1}(k)}\}_{ k\in \N}$ still denoted by $\{t_{1} r_{i_{1}(k)}\}_{ k\in \N}$, 
such that
$$
z_{1} = \lim_{k \to \infty}f_{i_{1}(k)} \circ \gamma_{t_{1} r_{i_{1}(k)}},
$$
for some $z_{1} \in X_{\infty}$.

We repeat the construction now with $h =2$. Again from \eqref{E:points}, $B_{2} \cap I(\{r_{i_{1}(k)} \}) \neq \emptyset$ and therefore there exist 
$t_{2} \in B_{2}$ and a subsequence of $\{ r_{i_{1}(k)} \}_{k\in \N}$, call it $\{ r_{i_{2}(k)} \}_{k\in \N}$ for which $t_{2} r_{i_{2}(k)} \in I$ and 
$$
z_{2} = \lim_{k \to \infty}f_{i_{2}(k)} \circ \gamma_{t_{2} r_{i_{2}(k)}},
$$
for some $z_{2} \in X_{\infty}$.

Thanks to \eqref{E:points}, we can repeat the same argument for any $h$ and with a diagonal argument
we infer the existence of sequences $\{ t_{h}\}_{ h\in \N}$ and $\{ r_{i_{k}} \}_{k \in \N}$,
such that for any $h$, for all sufficiently large $k$ we have
$$
r_{i_{k}} t_{h} \in I,  \qquad z_{h} = \lim_{k \to \infty}  f_{i_{k}} \circ \gamma_{t_{h} r_{i_{k}}}.
$$

{\bf Step 3.} \\
For $n, m \in \N$ we have: 
$$
\sfd_{\infty}(z_{n},z_{m}) = \lim_{k\to \infty} \sfd_{\infty} (f_{i_{k}} \circ \gamma_{t_{n} r_{i_{k}}}, f_{i_{k}} \circ \gamma_{t_{m} r_{i_{k}}}) = 
\lim_{k\to \infty} \frac{1}{r_{i_{k}}} \sfd(\gamma_{t_{n} r_{i_{k}}}, \gamma_{t_{m} r_{i_{k}}}).
$$
Since $|d\gamma|(0) > 0$, we have
$$
\sfd_{\infty}(z_{n},z_{m}) = |t_{n} - t_{m}| |d\gamma|(0).
$$
Define therefore the curve:
$$
\gamma^{\infty} : \{ t_{h} \}_{h \in \N} \to X_{\infty}, \qquad \gamma^{\infty}_{t_{h}} : = z_{h}.
$$
Hence we have 
$$
\sfd_{\infty}(\gamma^{\infty}_{t_{n}},\gamma^{\infty}_{t_{m}}) = |t_{n} - t_{m}|\cdot |d\gamma|(0).
$$
Now observe that the set of points $\{t_{h} \}_{h \in \N}$ is dense in $\R$, indeed for each $h \in \N$ the inclusion $t_{h} \in B_{h}$ holds.
It follows that $\gamma^{\infty}$ can be extended by continuity to any $s \in \R$. So we have proved the existence of 
$$
\gamma^{\infty} : \R \to X_{\infty}, \qquad \sfd_{\infty}(\gamma^{\infty}_{t}, \gamma^{\infty}_{s}) = |t - s|\cdot |d\gamma|(0).
$$
The claim follows.
\end{proof}

\begin{remark}\label{R:simultaneuslimits}
The constructions done in the previous proof can be done simultaneously for finitely many curves. In particular suppose 
to have $\gamma^{1},\dots, \gamma^{n} \in \Gamma(X)$ such that $\gamma^{j}_{0} = \bar x$, $0$ is a 
point of density one of $\dom(\gamma^{j})$ and $|d\gamma^{j}|(0)>0$, for $j = 1,\dots, n$. 
Then there exists a dense countable set of times $\{ t_{h} \}$ and a subsequence ${i_{k}}$ such that:  
$$
z_{h}^{j} = \lim_{k\to \infty} f_{i_{k}} \circ \gamma^{j}_{t_{h}r_{i_{k}}}, \quad \sfd_{\infty}(z_{h}^{j}, z_{\eta}^{j}) = |t_{h} - t_{\eta} | \cdot  |d\gamma^{j}|(0),
$$
for any $h,\eta \in \N$ and $j = 1,\dots, n$.
\end{remark}

\medskip

%%%%%%%%%%%%%%%%%%%%%%%%%%%%%%%%%%%%%%%%%%%%%%%%%%
%%%%%%%%%%%%%%%%%%%%%%%%%%%%%%%%%%%%%%%%%%%%%%%%%%
%%%%%%%%%%%%%%%%%%%%%%%%%%%%%%%%%%%%%%%%%%%%%%%%%%
%%%%%%%%%%%%%%%%%%%%%%%%%%%%%%%%%%%%%%%%%%%%%%%%%%
%%%%%%%%%%%%%-Moving the base point----%%%%%%%%%%%%%%%%%%%%%%%%

\subsection{Change of base point}

It is also possible to extend Proposition \ref{P:generaline} to almost any other point of the tangent space that is, 
if $(X_{\infty},\sfd_{\infty}, \bar x_{\infty})$ is a pointed tangent space of $(X,\sfd, \bar x)$ 
and $z_{\infty} \in X_{\infty}$, then one can find a tangent line passing through $z_{\infty}$, obtained as the blow-up 
of the same curve. 

This can be obtained using the fact that for almost every $\bar x_\infty$ and for every $z_\infty \in X_\infty$
also $(X_{\infty},\sfd_{\infty}, z_{\infty})$ is a pointed tangent space, 
provided the ambient measure $\mm$ is doubling. This has been proved by Preiss in \cite{P1987} in the Euclidean framework 
and adapted to the metric space case by Le Donne in \cite{LD2011}. 

\begin{theorem}[\cite{LD2011}, Theorem 1.1]\label{thm:Enrico}
Let $(X,\sfd, \mm)$ be a doubling metric measure space. Then for $\mm$-a.e. $x \in X$, for all 
$(X_{\infty},\sfd_{\infty},\bar x_{\infty}) \in \Tan(X,\sfd,\bar x)$, and for all $z_{\infty} \in X_{\infty}$ we have  
$$
(X_{\infty},\sfd_{\infty}, z_{\infty}) \in \Tan(X,\sfd,\bar x).
$$
\end{theorem}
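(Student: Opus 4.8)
The plan is to exhibit $(X_\infty,\sfd_\infty,z_\infty)$ as a pointed-Gromov--Hausdorff limit of rescalings of $X$ centered \emph{at $\bar x$}, by first re-basing the convergence that produces the given tangent and then transporting the center of the blow-up back from a moving sequence to the fixed point $\bar x$. First I would re-base the given convergence. Let $r_i\to 0$ and $\ve_i$-isometries $f_i$, with $f_i(\bar x)=\bar x_\infty$, realize $(X,\sfd/r_i,\bar x)\to(X_\infty,\sfd_\infty,\bar x_\infty)$, and put $R:=\sfd_\infty(\bar x_\infty,z_\infty)$. By almost surjectivity there are $y_i\in X$ with $\sfd_\infty(f_i(y_i),z_\infty)\le\ve_i$, and since $f_i$ almost preserves the rescaled distances one gets $\frac{1}{r_i}\sfd(\bar x,y_i)\to R$, so in particular $y_i\to\bar x$ in $(X,\sfd)$. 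Reading the same maps $f_i$ with base points $y_i$ and $f_i(y_i)$, and absorbing the error $\sfd_\infty(f_i(y_i),z_\infty)\le\ve_i$ into the isometry constant, exhibits
$$
(X,\sfd/r_i,y_i)\longrightarrow(X_\infty,\sfd_\infty,z_\infty),\qquad pGH.
$$
Thus the desired pointed space is already a blow-up of $X$; its only defect is that the rescalings are centered at the moving sequence $y_i\to\bar x$ rather than at $\bar x$ itself.

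The hard part, and the whole content of the result, is to move the center of the blow-up from $y_i$ back to $\bar x$ without changing the limit. I would reduce this to the following \emph{density lemma}, which is the metric form of Preiss's tangent-measure argument \cite{P1987} adapted in \cite{LD2011}: for $\mm$-a.e.\ $x$, whenever $r_i\to 0$ and $w_i\in X$ satisfy $\limsup_i \sfd(x,w_i)/r_i<\infty$ and $(X,\sfd/r_i,w_i)\to(Y,\rho,y')$ in $pGH$, then $(Y,\rho,y')\in\Tan(X,\sfd,x)$; equivalently, at $\mm$-a.e.\ $x$ the cluster set of blow-ups centered exactly at $x$ coincides with the a priori larger cluster set of blow-ups centered along sequences tending to $x$ at rate comparable to the scale. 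Granting this lemma at $\bar x$ (which holds for $\mm$-a.e.\ $\bar x$), the first step places $(X_\infty,\sfd_\infty,z_\infty)$ in the moving-center cluster set, hence in $\Tan(X,\sfd,\bar x)$; equivalently there exist fixed-center scales $\tilde r_j\to 0$ with $(X,\sfd/\tilde r_j,\bar x)\to(X_\infty,\sfd_\infty,z_\infty)$, which is the claim.

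To set up and prove the density lemma I would work in the space $\mathcal M$ of isometry classes of pointed, proper, complete metric spaces that are doubling with the constant inherited from $(X,\sfd,\mm)$; with the pointed-Gromov--Hausdorff topology $\mathcal M$ is compact and metrizable, each $\Tan(X,\sfd,x)$ is a closed (hence compact) subset, and $x\mapsto\Tan(X,\sfd,x)$ is a Borel multifunction. The lemma itself I expect to be by far the main obstacle: one quantifies, for a metric $D$ on $\mathcal M$, the discrepancy between $[(X,\sfd/r,x)]$ and the blow-ups at nearby competitors $w$, and runs a Lebesgue-density / Fubini argument over points and scales, using the doubling property of $\mm$ to compare the masses of balls centered at $x$ and at $w$ with $\sfd(x,w)$ small relative to $r$, so as to conclude that the discrepancy vanishes along suitable scales at $\mm$-a.e.\ $x$. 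Doubling is used twice and essentially: through precompactness of uniformly doubling spaces it guarantees that all the relevant limits live in $\mathcal M$, and it makes the covering estimates in the density argument quantitative. By contrast, the re-basing of Step~1 and the measurability of the multifunction are soft once this compact metrizable framework is in place.
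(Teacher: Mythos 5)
The first thing to note is that the paper does not prove this statement at all: it is imported verbatim from Le Donne \cite{LD2011} (Theorem 1.1), whose proof adapts Preiss's iterated-tangent argument from \cite{P1987}. Your outline reproduces the skeleton of that proof: re-base the given convergence at moving centers $y_i$ with $\sfd(\bar x,y_i)=O(r_i)$, then invoke a ``density lemma'' asserting that, at $\mm$-a.e.\ point, moving-center blow-up limits are already fixed-center tangents. Your Step~1 (the re-basing) is correct, and it is indeed the soft part.

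The genuine gap is that the density lemma --- which, as you yourself say, is ``the whole content of the result'' --- is never proved; it is only described in one sentence as a Lebesgue-density/Fubini argument. Reducing the theorem to an unproved statement of essentially the same depth is not a proof, so what is missing is precisely the mechanism that Le Donne and Preiss supply. Concretely: fix a countable $D$-dense family $\{Z_n\}$ in the Gromov-compact space of pointed proper metric spaces with the doubling constant inherited from $\mm$, where $D$ is a metric inducing pointed Gromov--Hausdorff convergence, and for $n,m,k\in\N$ set
$$
B_{n,m,k}:=\left\{ x\in X \ : \ D\bigl((X,\sfd/r,x),Z_n\bigr)\geq 1/m \ \text{ for all } 0<r<1/k \right\}.
$$
One must (i) check these sets are $\mm$-measurable, or else run the argument with outer densities; (ii) show that if $x$ is a density point of $B_{n,m,k}$, then no moving-center limit at $x$ can be $1/(2m)$-close to $Z_n$: given centers $y_i$ with $\sfd(x,y_i)\leq C r_i$, the doubling property of $\mm$ forces $B_{\epsilon r_i}(y_i)\cap B_{n,m,k}\neq\emptyset$ for all large $i$ (otherwise the relative measure of $B_{n,m,k}$ in $B_{(C+\epsilon)r_i}(x)$ stays bounded away from $1$, contradicting density one), producing $a_i\in B_{n,m,k}$ with $\sfd(a_i,y_i)=o(r_i)$; then $(X,\sfd/r_i,a_i)$ has the same limit as $(X,\sfd/r_i,y_i)$, and for large $i$ this violates the defining inequality of $B_{n,m,k}$ at the single scale $r_i<1/k$; and (iii) assemble: for $x$ outside the countable union of null sets of non-density points, if some re-based tangent $(X_\infty,\sfd_\infty,z_\infty)$ were not in $\Tan(X,\sfd,\bar x)$, then compactness of $\Tan(X,\sfd,\bar x)$ and Gromov precompactness of all blow-ups yield indices $n,m,k$ with $\bar x\in B_{n,m,k}$ while $D\bigl((X_\infty,\sfd_\infty,z_\infty),Z_n\bigr)<1/(2m)$, contradicting (ii). Note in particular that the bad sets must be defined by a condition on \emph{every single scale} $r<1/k$, not on limits along sequences of scales: it is exactly this quantification that allows one scale $r_i$ to produce the contradiction, and it is the point your sketch glosses over.
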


Combining Proposition \ref{P:generaline} and Theorem \ref{thm:Enrico} we have 

\begin{corollary}\label{C:differentpoint}
Let $(X,\sfd, \mm)$ be a doubling metric measure space, $\bar x \in X$ outside
the exceptional set of Theorem \ref{thm:Enrico} and 
$\gamma \in \Gamma(X)$ such that
$$
\gamma_{0} = \bar x, \qquad  0 \textrm{ is a point of density one of \ } \dom(\gamma), \qquad  |d\gamma|(0)>0. 
$$
Then for any  $(X_{\infty}, \sfd_{\infty},\bar x_{\infty}) \in \Tan(X,\sfd,\bar x)$ and any $z_{\infty} \in X_{\infty}$
there exists a line, limit of $\gamma$, passing through $z_{\infty}$.
\end{corollary}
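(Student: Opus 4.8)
The plan is to reduce the statement to Proposition \ref{P:generaline} by first re-pointing the tangent space at $z_\infty$ via Theorem \ref{thm:Enrico}, and then observing that the line produced by the blow-up of $\gamma$ is automatically anchored at the new base point.

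First I would apply Theorem \ref{thm:Enrico}. Since $\bar x$ lies outside the exceptional set, for the given $(X_\infty,\sfd_\infty,\bar x_\infty) \in \Tan(X,\sfd,\bar x)$ and the chosen $z_\infty \in X_\infty$ we obtain
$$
(X_\infty,\sfd_\infty,z_\infty) \in \Tan(X,\sfd,\bar x).
$$
Concretely this yields a sequence $r_i \to 0$ and a sequence of $\ve_i$-isometries $f_i$ realizing the $pGH$-convergence of $(X,\sfd/r_i,\bar x)$ to $(X_\infty,\sfd_\infty,z_\infty)$, normalized so that $f_i(\bar x) = z_\infty$.

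Next I would run the construction of Proposition \ref{P:generaline} with this re-pointed tangent, letting $z_\infty$ play the role of the distinguished point $\bar x_\infty$. The hypotheses on $\gamma$ — that $\gamma_0 = \bar x$, that $0$ has density one in $\dom(\gamma)$, and that $|d\gamma|(0) > 0$ — are exactly those demanded by the proposition, so its construction produces a curve $\gamma^\infty \colon \R \to X_\infty$ with $\sfd_\infty(\gamma^\infty_t,\gamma^\infty_s) = |t-s|\cdot|d\gamma|(0)$, obtained as the blow-up of $\gamma$ along the subsequence $f_{i_k}\circ\gamma_{(\cdot)\,r_{i_k}}$. Since $|d\gamma|(0)>0$, this curve is, up to the positive rescaling $|d\gamma|(0)$, an isometric copy of $\R$, i.e.\ a line, and it is the limit of $\gamma$.

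The only point requiring genuine attention — and the one I expect to be the main obstacle — is checking that this line actually passes through $z_\infty$, since Proposition \ref{P:generaline} as stated only asserts the existence of a line in the tangent. Here the normalization $f_i(\bar x)=z_\infty$ is essential. Because $\gamma_0 = \bar x$, we have $0 \in \dom(\gamma)$ and $f_{i_k}(\gamma_{0\cdot r_{i_k}}) = f_{i_k}(\bar x) = z_\infty$ for every $k$, so the time $t=0$ is compatible with the blow-up and contributes the limit point $\gamma^\infty_0 = z_\infty$. Equivalently, for any time $t_h$ in the dense set used in the construction one computes
$$
\sfd_\infty(\gamma^\infty_{t_h},z_\infty) = \lim_{k\to\infty}\frac{1}{r_{i_k}}\sfd(\gamma_{t_h r_{i_k}},\gamma_0) = |t_h|\cdot|d\gamma|(0),
$$
where the interval between $0$ and $t_h r_{i_k}$ shrinks nicely to $0$, so that metric differentiability at $0$ applies; letting $t_h \to 0$ and using the continuity of $\gamma^\infty$ then forces $\gamma^\infty_0 = z_\infty$. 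In either form this shows the blow-up line passes through $z_\infty$, which is precisely the assertion of the corollary.
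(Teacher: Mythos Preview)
Your proposal is correct and follows exactly the approach the paper indicates: the paper's proof is the single sentence ``Combining Proposition~\ref{P:generaline} and Theorem~\ref{thm:Enrico} we have'', and you have faithfully unpacked that combination. Your explicit verification that the blow-up line is anchored at $z_\infty$ (via the normalization $f_i(\bar x)=z_\infty$ and the metric-differentiability computation at $t=0$) makes precise a point the paper leaves implicit.
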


%%%%%%%%%%%%%%%%%%%%%%%%%%%%%%%%%%%%%%%%%%%%%%%%%%%%
%%%%%%%%%%%%%%%%%%%%%%%%%%%%%%%%%%%%%%%%%%%%%%%%%%%%
%%%%%%%%%%%%%%%%%%%%%%%%%%%%%%%%%%%%%%%%%%%%%%%%%%%%
%%%%%%%%%%%%%%%%%%%%%%%%%%%%%%%%%%%%%%%%%%%%%%%%%%%%
%%%%%%%%%%%%%%%%%%%%%%%%%%%%%%%%%%%%%%%%%%%%%%%%%%%%
%-------------APPLICATIONS TO LIP DIFF SPACES---------------------------------

\section{Tangent lines in Lipschitz differentiability spaces}\label{sec:tanlipspace}
In order to apply metric differentiability to Lipschitz differentiability spaces, a Borel regularity with respect to a precise Polish space is needed. 
We therefore recall few definitions from \cite{bate:measurelip} that will be needed only in this section.

For a metric space $(X,\sfd)$ define $H(X)$ to be the collection of non-empty compact subsets of $\R\times X$ with the 
Hausdorff metric, so that $H(X)$ is complete and separable.  Moreover identify $\Gamma(X)$ with its isometric image in $H(X)$ 
via the map $\gamma \to \gr(\gamma)$ and consider 
$$
A(X) : = \left\{ (x,\gamma) \in X \times \Gamma(x)\, : \, \exists \ t \in \dom(\gamma), \ x = \gamma_{t} \right\}.
$$
One can show (Lemma 2.7, \cite{bate:measurelip}) that $\Gamma(X)$ is a Borel subset of $H(X)$ and $A(X)$ is a Borel subset of $X\times H(X)$.
Modifying Lemma 2.8 of \cite{bate:measurelip} we obtain

\begin{lemma}\label{L:Borel}
Let $(X,\sfd)$ be a complete and separable metric space. The map $F \colon A(X) \to \R \cup \{ \infty\}$
defined as
\begin{equation}\label{E:Borelldiff}
F(x,\gamma) : =
			\begin{cases}
						|d\gamma|(\gamma^{-1}(x)) 	& 	\textrm{if it exists} \\ 	
						\infty 					& 	\textrm{otherwise}	
			\end{cases}
\end{equation}
is Borel.
\end{lemma}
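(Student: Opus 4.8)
The plan is to follow the strategy of Lemma~2.8 in \cite{bate:measurelip}, but adapted to the two-sided ``nicely shrinking'' difference quotient of Definition~\ref{D:metricdiffer}, expressing both the existence and the value of $|d\gamma|(\gamma^{-1}(x))$ through countably many monotone Borel operations applied to elementary functions of $(x,\gamma)$. Writing $t=\gamma^{-1}(x)$, which is well defined since $\gamma$ is injective, I first note that an interval with endpoints $t+s,t+\tau$ shrinks nicely to $t$ with parameter $\alpha>0$ precisely when $|s-\tau|\ge 2\alpha\max(|s|,|\tau|)$, and that $\gamma$ is metrically differentiable at $t$ if and only if for every $\alpha>0$ the difference quotient has a limit along all such pairs, these limits then necessarily coinciding. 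Accordingly, for $k,m\in\N$ I set
$$
S^{+}_{k,m}(x,\gamma) := \sup\left\{ \frac{\sfd(\gamma_{t+s},\gamma_{t+\tau})}{|s-\tau|} \ :\ t+s,\,t+\tau\in\dom(\gamma),\ 0<|s|,|\tau|<\tfrac{1}{m},\ |s-\tau|\ge \tfrac{2}{k}\max(|s|,|\tau|)\right\},
$$
with $S^{-}_{k,m}$ the corresponding infimum, and put $U_k:=\inf_m S^{+}_{k,m}$ and $V_k:=\sup_m S^{-}_{k,m}$.

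By construction $U_k$ and $V_k$ are the upper and lower limits of the difference quotient over $(1/k)$-nicely shrinking pairs, so the $(1/k)$-restricted limit exists exactly when $U_k=V_k$. Since enlarging $k$ only weakens the constraint, $U_k$ is non-decreasing and $V_k$ non-increasing in $k$, while always $V_k\le U_k$; hence if $U_k=V_k$ holds for every $k$ the common value is automatically constant in $k$, and equals $|d\gamma|(t)$. Therefore $E:=\bigcap_{k\in\N}\{(x,\gamma):U_k(x,\gamma)=V_k(x,\gamma)\}$ is precisely the set on which $|d\gamma|(t)$ exists, on $E$ we have $F=U_1$, and $F\equiv\infty$ on $A(X)\setminus E$. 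Granting that each $S^{\pm}_{k,m}$ is Borel, the functions $U_k,V_k$ are Borel as countable infima and suprema, $E$ is Borel, and so is $F$; thus the whole statement reduces to the measurability of $S^{\pm}_{k,m}$.

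Proving that $S^{\pm}_{k,m}\colon A(X)\to\R$ is Borel is the only genuinely technical point and the main obstacle. The difficulty is that the supremum (respectively infimum) ranges over a set of admissible pairs $(s,\tau)$ that itself varies with $(x,\gamma)$, both through $\dom(\gamma)$ and through the base parameter $t=\gamma^{-1}(x)$; equivalently it is a sup/inf of a ratio evaluated along the graph $\gr(\gamma)\subset\R\times X$, whose shape moves with $\gamma$ in the Hausdorff metric of $H(X)$. To handle this I would stratify $A(X)=\bigcup_N A_N(X)$, where $A_N(X)$ collects the pairs with $\gamma$ being $N$-biLipschitz; each $A_N(X)$ is Borel, and on it the uniform biLipschitz bound upgrades Hausdorff convergence of graphs to uniform convergence of the curves together with their inverses, so that $(x,\gamma)\mapsto\gamma^{-1}(x)$ is continuous there. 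On a fixed stratum the admissible pairs correspond to pairs of points of $\gr(\gamma)$ in a fixed neighbourhood of $(t,x)$ satisfying open niceness and size constraints, and by transporting near-optimal competitors between a curve and its Hausdorff-close perturbations one obtains lower semicontinuity of $S^{+}_{k,m}$ and upper semicontinuity of $S^{-}_{k,m}$ on $A_N(X)$.

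The remaining care, and the place where I expect the bookkeeping to be delicate, is ensuring that admissible competitors genuinely persist under Hausdorff perturbation: the base point $t=\gamma^{-1}(x)$ must move continuously, and the non-strict constraint $|s-\tau|\ge\tfrac2k\max(|s|,|\tau|)$ must not be destroyed, which I would arrange by approximating it from the strictly interior regime (equivalently by a countable family of strict constraints) so that the competitor set stays open. Since semicontinuous functions are Borel and a countable union of Borel strata carrying Borel restrictions produces a Borel function, this establishes the measurability of $S^{\pm}_{k,m}$ and completes the proof, exactly mirroring the structure of Bate's Lemma~2.8.
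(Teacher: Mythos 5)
Your proposal is correct in substance and, at bottom, follows the same route as the paper's own (very terse) proof, which simply defers to Bate's Lemma 2.8: restrict the difference quotient by a niceness parameter and a scale, observe that the resulting uniform conditions behave well under Hausdorff convergence of graphs in $H(X)$, and finish with countable unions and intersections. Your envelopes $S^{\pm}_{k,m}$, $U_k$, $V_k$ are a functional repackaging of the paper's parameter-indexed sets (the paper fixes rational $q,\epsilon,\delta,\alpha$ and claims the set where the quotient is uniformly $\epsilon$-close to $q$ is closed). Two comparative remarks. First, your stratification by biLipschitz constant is harmless but unnecessary: $(x,\gamma)\mapsto\gamma^{-1}(x)$ is continuous on all of $A(X)$, since the limit curve is itself injective with compact graph, and your competitor-transport argument never uses uniform constants for the approximating curves. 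Second, the point you flag as delicate, replacing the non-strict niceness constraint by strict ones so that competitors persist under perturbation, is not pedantry but genuinely necessary: with the non-strict constraint, as literally written in the paper, the sets need not be closed, because a limit curve can acquire an admissible pair lying exactly on the boundary of the constraint region while the approximating curves have no admissible pairs at all; the strict version is closed, and the interleaving $\{\text{strict-}1/k\}\subset\{\text{non-strict-}1/k\}\subset\{\text{strict-}1/(k+1)\}$ shows it characterizes the same notion. On this point your write-up is more careful than the original.

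One discrepancy should be repaired: you restrict competitors to $0<|s|,|\tau|$, whereas Definition \ref{D:metricdiffer} allows the interval to have $t$ as an endpoint (the pair $(0,\tau)$ shrinks nicely with $\alpha=1/2$), so a priori your restricted limit could exist at a point where $|d\gamma|$ does not, i.e., you would prove Borel-ness of a different function. The fix is short: either admit such pairs in $S^{\pm}_{k,m}$ (your transport argument is unchanged), or note that if $t$ is an accumulation point of $\dom(\gamma)$, then for any pair $(0,\tau)$ one can pick $s'$ with $t+s'\in\dom(\gamma)$ and $0<|s'|\le|\tau|/i$, and the Lipschitz bound gives that the quotients of $(0,\tau)$ and $(s',\tau)$ differ by at most $CL/i$, so the two limits exist simultaneously and coincide. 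At points $t$ isolated in $\dom(\gamma)$ all competitor sets are eventually empty; there your claim $V_k\le U_k$ fails under the conventions $\sup\emptyset=-\infty$, $\inf\emptyset=+\infty$, and any proof (the paper's included) must fix a convention for $F$ on this set, which is itself Borel, so nothing is lost. With these repairs your argument is complete.
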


\begin{proof}
The proof is a slight modification of the proof of Lemma 2.8 of \cite{bate:measurelip}. \\ 
Let $q, \delta, \epsilon > 0$ and $\alpha \in (0,1]$. The set of $(\gamma_{t_{0}}, \gamma) \in A(X)$ with 
$$
\big| d(\gamma_{t_{0}+t}, \gamma_{t_{0}+ s}) - q|t-s| \big| \leq \epsilon |t-s|,
$$
for all $t,s$ with $t_{0}+t, t_{0}+s \in \dom(\gamma)$ and $|t|, |s| \leq \delta$ and $|t-s| \geq \alpha \max\{t,s\}$,  is closed.
After taking suitable countable intersection 
and unions as in \cite{bate:measurelip}, the set where $F$ belongs to some open set of $\R$ is Borel and the claim follows.
\end{proof}

\medskip

Now we obtain the following improved version of Theorem \ref{T:bate}, stated in Section \ref{Ss:diffspaces}.

\medskip

\begin{proposition}\label{P:differentiable1}
Let $(U,\f)$ be an $n$-dimensional chart in a Lipschitz differentiability space $(X,\sfd,\mm)$. Then for almost every $x \in U$, there exists 
$\gamma_{1}^{x},\dots, \gamma_{n}^{x} \in \Gamma(X)$ such that for each $i =1, \dots, n$
\begin{itemize}
\item[i)] $(\gamma_{i}^{x})^{-1}(x) = 0$ is a point of density one of $(\gamma_{i}^{x})^{-1}(U)$; 
\item[ii)] the metric differential in $0$ exists and $|d\gamma^{x}_{i} |(0)>0$;
\item[iii)]$(\f \circ \gamma_{i}^{x})'(0)$ are linearly independent.
\end{itemize}
Moreover, for any such $\gamma_{i}^{x}$, for any Lipschitz $g \colon X \to \R$ and almost every $x \in U$, the gradient of $g$ at $x$ with respect to $\f$ 
and $\gamma_{1}^{x},\dots, \gamma_{n}^{x}$ equals $Dg(x)$, 
that is 
$$
\left(g \circ \gamma_{i}^{x}\right)'(0) = Dg (x_{0})\cdot \left(\f\circ \gamma_{i}^{x}\right)'(0),
$$
for $i =1,\dots, n$.
\end{proposition}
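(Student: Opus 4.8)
The plan is to enhance Theorem \ref{T:bate} with the metric differentiability at the base point, the new ingredient being item (ii); items (i), (iii) and the \emph{moreover} part are inherited verbatim from Theorem \ref{T:bate}, so only (ii) requires work. The decisive tools are Proposition \ref{P:metricdiff}, which gives metric differentiability $\L^{1}$-a.e. on the domain of any fixed Lipschitz curve, and the Borel measurability of the metric differential established in Lemma \ref{L:Borel}. The point is that the curves of Theorem \ref{T:bate} are not produced one at a time but through $n$ independent Alberti representations of $\mm$ on $U$, which is the actual content of Theorem 6.6 and Corollary 6.7 of \cite{bate:measurelip}; each such representation $\mathcal{A}_{j}$ expresses $\mm$ on $U$ as an integral $\int_{\Gamma(X)} \gamma_{\sharp}\mu_{\gamma}\, dP_{j}(\gamma)$ with $\mu_{\gamma} \ll \L^{1}$ on $\dom(\gamma)$ for $P_{j}$-a.e. $\gamma$.

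For each $j = 1,\dots,n$ let $F$ be the Borel map of Lemma \ref{L:Borel} and set $N_{j} := \{(x,\gamma) \in A(X) : F(x,\gamma) = \infty\}$, the Borel set of pairs for which the metric differential at the base point $\gamma^{-1}(x)$ does not exist. For a fixed $\gamma$, Proposition \ref{P:metricdiff} says that $\{t \in \dom(\gamma) : |d\gamma|(t) \text{ does not exist}\}$ is $\L^{1}$-negligible, hence $\mu_{\gamma}$-negligible since $\mu_{\gamma} \ll \L^{1}$. Pulling $N_{j}$ back along the Borel map $(\gamma,t) \mapsto (\gamma_{t},\gamma)$ and integrating the (vanishing) slice masses against $P_{j}$ --- an exchange legitimate precisely because $N_{j}$ is Borel --- we conclude that the set of base points at which metric differentiability fails along a positive-$P_{j}$-mass of curves is $\mm$-negligible. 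Intersecting these $n$ full-measure sets with the exceptional set of Theorem \ref{T:bate}, we obtain a full-measure subset of $U$ on which the curves $\gamma_{1}^{x},\dots,\gamma_{n}^{x}$ furnished by Theorem \ref{T:bate} can simultaneously be taken to satisfy (i), (iii) and the additional requirement that $|d\gamma_{i}^{x}|(0)$ exists.

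It remains to upgrade existence of the metric differential to its strict positivity, and here item (iii) does the job automatically. Writing $\gamma := \gamma_{i}^{x}$ and taking $\tau = 0$ in Definition \ref{D:metricdiffer}, the nicely shrinking intervals $[0,t]$ give $\lim_{t\to 0}\sfd(\gamma_{t},\gamma_{0})/|t| = |d\gamma|(0)$; since $\f$ is $\Lip(\f)$-Lipschitz,
$$
\big|(\f\circ\gamma)'(0)\big| = \lim_{t\to 0}\frac{|\f(\gamma_{t}) - \f(\gamma_{0})|}{|t|} \leq \Lip(\f)\,\lim_{t\to 0}\frac{\sfd(\gamma_{t},\gamma_{0})}{|t|} = \Lip(\f)\cdot|d\gamma|(0).
$$
As the vectors $(\f\circ\gamma_{i}^{x})'(0)$ are linearly independent each of them is nonzero, so the displayed inequality forces $|d\gamma_{i}^{x}|(0) \geq |(\f\circ\gamma_{i}^{x})'(0)|/\Lip(\f) > 0$, which is (ii). I expect the Fubini step of the second paragraph to be the main obstacle: one must verify that the null set coming from integrating over the Alberti representation is genuinely $\mm$-negligible and that discarding it still leaves enough curves to realize the linearly independent directions of Theorem \ref{T:bate}. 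It is exactly at this point that Lemma \ref{L:Borel} is indispensable, for without the Borel measurability of $F$ the slice-wise vanishing could not be promoted to an $\mm$-almost-everywhere statement.
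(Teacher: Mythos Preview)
Your proof is correct and follows essentially the same route as the paper: both use the Alberti representations from Bate together with Proposition~\ref{P:metricdiff} and Lemma~\ref{L:Borel} to add metric differentiability to Theorem~\ref{T:bate}, the paper simply packaging your Fubini step as a citation to Proposition~2.9 of \cite{bate:measurelip}. Your explicit argument for $|d\gamma_i^x|(0)>0$ via the Lipschitz constant of $\f$ is a pleasant bonus, though since $\Gamma(X)$ consists by definition of biLipschitz curves, positivity of the metric differential (wherever it exists) is automatic.
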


Even though the proof of Proposition \ref{P:differentiable1} contains no novelty with respect to Theorem \ref{T:bate}, we included it here for reader's convenience. 

\begin{proof}
By Theorem 6.6 of \cite{bate:measurelip} we have the existence of a countable decomposition $U = \cup_{j} U_{j}$ of $U$ into sets with $n$ 
$\f$-independent Alberti representations (whose definition can be found in Section 2 of \cite{bate:measurelip}).
We consider for $k = 1, \dots, n$ the Borel function $F_{k}: A(X) \to \R \cup \{ \infty\}$ defined by
$$
F_{k}(x,\gamma) : =
			\begin{cases}
						(\f^{k} \circ \gamma)'(\gamma^{-1}(x)) 	& 	\textrm{if it exists} \\ 	
						\infty 							& 	\textrm{otherwise},	
			\end{cases}
$$
where $\f^{k}$ is the $k$-th component of the coordinate map $\f : U \to \R^{n}$. 
Moreover, we define $F_{0}$ to be the function $F$ considered in Lemma \ref{L:Borel}.

For each $k = 0, \dots, n$ all the assumption of Proposition 2.9 of \cite{bate:measurelip} are satisfied. 
The case $k = 0$ follows from Proposition \ref{P:metricdiff} and Lemma \ref{L:Borel}, while the case $k \geq 1$ from Lemma 2.8 of \cite{bate:measurelip}.
 Then we can repeat
the same argument for the $n$ $\f$-independent Alberti representations on each $U_{j}$.

Hence for each $j \in \N$ there exists $V_{j} \subset U_{j}$ with $\mm(U_{j} \setminus V_{j}) = 0$ such that for each $x \in V_{j}$
there exist $\gamma^{x}_{1}, \dots, \gamma^{x}_{n} \in \Gamma(X)$ such that
\begin{itemize}
\item[i)] $(\gamma_{i}^{x})^{-1}(x) = 0$ is a point of density one of $(\gamma_{i}^{x})^{-1}(V_{j})$; 
\item[ii)] the metric differential in $0$ exists and $|d\gamma^{x}_{i} |(0)>0$;
\item[iii)]$(\f \circ \gamma_{i}^{x})'(0)$ are linearly independent,
\end{itemize}
for $i =1,\dots, n$ and for each $k = 0, \dots, n$ the map $x \mapsto F_{k}(x,\gamma^{x}_{i})$ is measurable. 
Since $V_{j} \subset U$, i) implies that $(\gamma_{i}^{x})^{-1}(x) = 0$ is a point of density one of $(\gamma_{i}^{x})^{-1}(U)$. 
This proves the first part of the statement. The second part just follows from Theorem \ref{T:bate}. 
\end{proof}

We can now use the previous result to obtain the following 

\begin{proposition}\label{P:result1}
Let $(X,\sfd,\mm)$ be a Lipschitz differentiability space and $(U,\f)$ be an $n$-dimensional chart. 
Then for $\mm$-almost every $\bar x \in U$, any element $(X_{\infty},\sfd_{\infty},\bar x_{\infty}) \in \Tan(X,\sfd,\bar x)$ contains $n$ disjoint 
(neglecting $\bar x_{\infty}$) isometric copies of $\R$, obtained as limits of Lipschitz curves.
\end{proposition}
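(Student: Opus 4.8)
The plan is to feed the curves produced by Proposition \ref{P:differentiable1} into the simultaneous blow-up construction of Remark \ref{R:simultaneuslimits} and then to separate the resulting lines by tracking how the chart map $\f$ behaves along them.

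First I would fix, for $\mm$-a.e. $\bar x \in U$, the curves $\gamma^{1}, \dots, \gamma^{n} \in \Gamma(X)$ given by Proposition \ref{P:differentiable1}: each satisfies $\gamma^{j}_{0} = \bar x$, has $0$ as a density-one point of $\dom(\gamma^{j})$, satisfies $|d\gamma^{j}|(0) > 0$, the derivative $v_{j} := (\f \circ \gamma^{j})'(0) \in \R^{n}$ exists, and $v_{1}, \dots, v_{n}$ are linearly independent. Fixing any $(X_{\infty}, \sfd_{\infty}, \bar x_{\infty}) \in \Tan(X, \sfd, \bar x)$, with defining radii $r_{i} \to 0$ and approximate isometries $f_{i}$, I apply Remark \ref{R:simultaneuslimits} to blow up all $n$ curves along one common subsequence $\{ r_{i_{k}} \}$. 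This produces a set of times $\{ t_{h} \}$, dense in $\R$, with $t_{h} r_{i_{k}} \in \dom(\gamma^{j})$ for all large $k$, limit points $z^{j}_{h} = \lim_{k} f_{i_{k}}(\gamma^{j}_{t_{h} r_{i_{k}}})$, and --- since $|d\gamma^{j}|(0) > 0$ --- continuous extensions $\iota_{j} \colon \R \to X_{\infty}$ that are isometric embeddings with $\iota_{j}(0) = \bar x_{\infty}$ (the latter because $\gamma^{j}_{0} = \bar x$ and $f_{i_{k}}(\bar x) = \bar x_{\infty}$). These are the $n$ candidate lines.

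The core of the argument is a quantitative separation estimate. For rational times $t_{h}, t_{\eta}$ and each coordinate $l = 1, \dots, n$, I start from
\[
\sfd_{\infty}(\iota_{i}(t_{h}), \iota_{j}(t_{\eta})) = \lim_{k \to \infty} \frac{1}{r_{i_{k}}} \sfd\big(\gamma^{i}_{t_{h} r_{i_{k}}}, \gamma^{j}_{t_{\eta} r_{i_{k}}}\big),
\]
which follows from the $\ve_{i_{k}}$-isometry property of $f_{i_{k}}$ for the rescaled distance $\sfd/r_{i_{k}}$. Bounding the distance on the right from below by $\Lip(\f^{l})^{-1} | \f^{l}(\gamma^{i}_{t_{h} r_{i_{k}}}) - \f^{l}(\gamma^{j}_{t_{\eta} r_{i_{k}}}) |$ and letting $k \to \infty$, the difference quotients of $\f^{l} \circ \gamma^{i}$ and $\f^{l} \circ \gamma^{j}$ converge to the corresponding directional derivatives, so that $\sfd_{\infty}(\iota_{i}(t_{h}), \iota_{j}(t_{\eta})) \geq \Lip(\f^{l})^{-1} | t_{h} v_{i}^{l} - t_{\eta} v_{j}^{l} |$. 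Maximizing over $l$ and using the equivalence of norms on $\R^{n}$ yields a constant $C > 0$ with $\sfd_{\infty}(\iota_{i}(t_{h}), \iota_{j}(t_{\eta})) \geq C | t_{h} v_{i} - t_{\eta} v_{j} |$; since both sides are continuous in the time parameters (the maps $\iota_{j}$ being Lipschitz) this extends from the dense set to all real $s, t$:
\[
\sfd_{\infty}(\iota_{i}(t), \iota_{j}(s)) \geq C\, | t\, v_{i} - s\, v_{j} |.
\]

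Finally I would read off the disjointness. If $\iota_{i}(t) = \iota_{j}(s)$ with $i \neq j$, the estimate forces $t v_{i} = s v_{j}$, and the linear independence of $v_{1}, \dots, v_{n}$ then forces $t = s = 0$; hence the only possible common point of two distinct lines is $\bar x_{\infty}$, while a single line cannot self-intersect because it is an isometric embedding. Thus the $n$ lines are pairwise disjoint away from $\bar x_{\infty}$ and each arises as the limit of a Lipschitz curve, which is the assertion. I expect the separation estimate to be the main obstacle: one must carefully interchange the blow-up limit with the $\ve$-isometry comparison and check that the difference quotients of $\f \circ \gamma^{j}$ really converge to $v_{j}$ --- which relies on $t_{h} r_{i_{k}}$ staying inside $\dom(\gamma^{j})$, as arranged by the simultaneous construction --- before extending from rational to real parameters by continuity.
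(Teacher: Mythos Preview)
Your proof is correct and follows essentially the same route as the paper: obtain the curves from Proposition~\ref{P:differentiable1}, blow them up simultaneously via Remark~\ref{R:simultaneuslimits}, and separate the resulting lines by pushing forward through~$\f$ and invoking the linear independence of the~$v_{j}$. Your two-parameter separation estimate $\sfd_{\infty}(\iota_{i}(t),\iota_{j}(s)) \geq C\,|t\,v_{i} - s\,v_{j}|$ is a mild sharpening of the paper's same-time version $\sfd_{\infty}(\iota_{j}(t_{h}),\iota_{l}(t_{h})) \geq (t_{h}/L)\,|v_{j}-v_{l}|$, and it lets you dispose of the different-times case directly rather than via the paper's extra remark that unit-speed lines through~$\bar x_{\infty}$ cannot meet at unequal parameter values.
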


\begin{proof} Take any pointed metric measure space $(X_{\infty},\sfd_{\infty},\bar x_{\infty}) \in \Tan(X,\sfd,\bar x)$ 
and the corresponding sequence of dilations $r_{i}> 0$, with $r_{i} \to 0$.

The existence of $n$ isometric copies of $\R$ follows straightforwardly from Definition \ref{D:pmGH}, Proposition \ref{P:generaline} and Proposition \ref{P:differentiable1}.
It only remains to prove that the copies are disjoint. 
To this end we consider a chart $(U,\f)$ with $\bar x \in U$ and use Remark \ref{R:simultaneuslimits}: 
there exists a dense sequence  $\{ t_{h} \}_{h\in \N}\subset \R$ and a subsequence ${i_{k}}$ such that:  
$$
z_{h}^{j} = \lim_{k\to \infty} f_{i_{k}} \circ \gamma^{j}_{t_{h}r_{i_{k}}}, \quad \sfd_{\infty}(z_{h}^{j}, z_{\eta}^{j}) = |t_{h} - t_{\eta} | \cdot  |d\gamma^{j}|(0),
$$
for $j = 1, \dots, n$, where $f_{i_{k}}$ is the sequence of approximate isometries and $\gamma^{j}$ are given by Proposition \ref{P:differentiable1}. 
Recall that the closure in $\sfd_{\infty}$ of each $\{ z_{h}^{j} : h \in \N \}$ forms the isometric copies of $\R$ in $X_{\infty}$.  Via a reparametrization, without loss of generality, 
we may also assume that $|d\gamma^{j}|(0) = 1$ for all $ j = 1,\dots, n$.

Now we just observe that for $j ,l = 1, \dots, n$
\begin{align*}
\sfd_{\infty} ( z_{h}^{j}, z_{h}^{l} ) 
	= 	&~ \lim_{k \to \infty }   \sfd_{\infty} ( f_{i_{k}} \circ \gamma^{j}_{t_{h}r_{i_{k}}}, f_{i_{k}} \circ \gamma^{l}_{t_{h}r_{i_{k}}} ) \crcr
	= 	&~ \lim_{k \to \infty }   \frac{1}{r_{i_{k}}} \sfd ( \gamma^{j}_{t_{h}r_{i_{k}}}, \gamma^{l}_{t_{h}r_{i_{k}}} ) \crcr
	\geq &~ \frac{1}{L} \lim_{k \to \infty }   \frac{1}{r_{i_{k}}} |    \f \circ \gamma^{j}_{t_{h}r_{i_{k}}}-   \f \circ \gamma^{l}_{t_{h}r_{i_{k}}} |\crcr
	\geq &~ \frac{t_{h}}{L}  |   \left( \f \circ \gamma^{j} \right)'(0) -   \left( \f \circ \gamma^{l} \right)'(0) |,
\end{align*}
where $L$ is the Lipschitz constant of $\f$. 
Therefore we have proved that 
\begin{equation}\label{E:independence}
\sfd_{\infty} ( z_{h}^{j}, z_{h}^{l} ) \geq \frac{t_{h}}{L}  |   \left( \f \circ \gamma^{j} \right)'(0) -   \left( \f \circ \gamma^{l} \right)'(0) |,
\end{equation}
that implies, by linear independence, that $\sfd_{\infty} ( z_{h}^{j}, z_{h}^{l} ) > 0$, for all $h \in \N$. 
Since intersection for different times is not possible (at time 0 they start from the same point, with the same speed), the claim follows. 
\end{proof}

We summarize the disjointness property of the isometric embeddings of $\R$.

\begin{corollary}\label{C:resume}
Let $(X,\sfd,\mm)$ be a Lipschitz differentiability space and $(U,\f)$ be an $n$-dimensional chart. 
Then for $\mm$-almost every $\bar x \in U$, there exist $v_{1}, \dots, v_{n} \in \R^{n}$ linearly independent such that 
for any element $(X_{\infty},\sfd_{\infty},\bar x_{\infty}) \in \Tan(X, \sfd,\bar x)$  there exist 
$\iota_{1}, \dots, \iota_{n} \colon \R \to X_{\infty}$ so that  \medskip
\begin{itemize}
	\item[i)] 	$\iota_{j}(0) = \bar x_{\infty}$, for any $j = 1,\dots, n$; \\
	\item[ii)] 	$\sfd_{\infty}(\iota_{j}(t),\iota_{j}(s)) = |t - s|$, for any $j = 1,\dots, n$, for all $s,t \in \R$; \\
	\item[iii)] 	$\sfd_{\infty}(\iota_{j}(t),\iota_{k}(t)) \geq C |t| \cdot |v_{j} - v_{k}|$, for any $j, k = 1,\dots, n$, for all $t \in \R$; \medskip
\end{itemize}
for some positive constant $C$. Each of the $\iota_{i}$ is obtained as the limit of a Lipschitz curve.
\end{corollary}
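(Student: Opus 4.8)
The plan is to read the statement directly off Proposition \ref{P:result1}, whose proof already supplies every estimate needed; the remaining work is only to name the vectors $v_j$ and the constant $C$ explicitly and to upgrade the lower bound iii) from a dense set of times to all of $\R$.

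First I would fix $\bar x \in U$ in the full-measure set furnished by Proposition \ref{P:differentiable1} and take the associated curves $\gamma^{\bar x}_{1}, \dots, \gamma^{\bar x}_{n} \in \Gamma(X)$. By part iii) of that proposition the vectors
$$
v_{j} := (\f \circ \gamma^{\bar x}_{j})'(0) \in \R^{n}, \qquad j = 1, \dots, n,
$$
are linearly independent, and these are the vectors appearing in the statement. After reparametrizing each curve as in the proof of Proposition \ref{P:result1} I may assume $|d\gamma^{\bar x}_{j}|(0) = 1$; this merely rescales each $v_{j}$ by a positive factor, so linear independence is preserved.

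Next, given any $(X_{\infty},\sfd_{\infty},\bar x_{\infty}) \in \Tan(X,\sfd,\bar x)$, I would fix a realizing sequence $r_{i} \to 0$ together with the approximate isometries $f_{i}$, and invoke Remark \ref{R:simultaneuslimits}: the simultaneous blow-up of $\gamma^{\bar x}_{1}, \dots, \gamma^{\bar x}_{n}$ produces a dense set of times $\{t_{h}\}_{h \in \N} \subset \R$ and a subsequence $\{r_{i_{k}}\}$ with limit points $z^{j}_{h} = \lim_{k} f_{i_{k}} \circ \gamma^{j}_{t_{h} r_{i_{k}}}$ satisfying $\sfd_{\infty}(z^{j}_{h}, z^{j}_{\eta}) = |t_{h} - t_{\eta}|$. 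As in Proposition \ref{P:generaline}, the assignment $t_{h} \mapsto z^{j}_{h}$ is distance-preserving on a dense subset of $\R$, hence extends uniquely to an isometric embedding $\iota_{j} \colon \R \to X_{\infty}$. This yields ii), while i) follows from $\gamma^{j}_{0} = \bar x$ together with $f_{i}(\bar x) = \bar x_{\infty}$, so that $\iota_{j}(0) = \bar x_{\infty}$.

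Finally, for iii) I would reproduce the chain of inequalities \eqref{E:independence} from the proof of Proposition \ref{P:result1}: using the $L$-Lipschitz continuity of $\f$ and the differentiability of $\f \circ \gamma^{j}$ at $0$,
\begin{align*}
\sfd_{\infty}(z^{j}_{h}, z^{l}_{h})
	&= \lim_{k \to \infty} \frac{1}{r_{i_{k}}} \sfd(\gamma^{j}_{t_{h} r_{i_{k}}}, \gamma^{l}_{t_{h} r_{i_{k}}}) \\
	&\geq \frac{1}{L} \lim_{k \to \infty} \frac{1}{r_{i_{k}}} \big| \f \circ \gamma^{j}_{t_{h} r_{i_{k}}} - \f \circ \gamma^{l}_{t_{h} r_{i_{k}}} \big|
	= \frac{|t_{h}|}{L} |v_{j} - v_{l}|,
\end{align*}
so that $\sfd_{\infty}(\iota_{j}(t_{h}), \iota_{k}(t_{h})) \geq \frac{|t_{h}|}{L} |v_{j} - v_{k}|$ with $C := 1/L$. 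Since each $\iota_{j}$ is an isometric embedding and hence $1$-Lipschitz, the left-hand side $\sfd_{\infty}(\iota_{j}(t),\iota_{k}(t))$ is continuous in $t$, and as $\{t_{h}\}$ is dense in $\R$ the bound passes to every $t \in \R$, giving iii). There is no genuine obstacle here: the substantive estimate is already contained in Proposition \ref{P:result1}, and the only point requiring a little care is this final density–continuity passage, which is immediate from the $1$-Lipschitz nature of the $\iota_{j}$.
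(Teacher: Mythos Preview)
Your proposal is correct and mirrors exactly what the paper intends: the corollary is stated without proof as a direct summary of Proposition \ref{P:result1}, and you have simply made explicit the identifications $v_{j}=(\f\circ\gamma^{\bar x}_{j})'(0)$, $C=1/L$, and the density--continuity passage from $\{t_{h}\}$ to all of $\R$ that the paper leaves implicit. There is nothing to add.
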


If the Lipschitz differentiability space is also doubling, one can argue as in Corollary \ref{C:differentpoint} 
to obtain information on lines through any point of the tangent space.

\begin{theorem}\label{T:curves}
Let $(X,\sfd,\mm)$ be a doubling Lipschitz differentiability space and $(U,\f)$ be an $n$-dimensional chart. 
Then for $\mm$-almost every $\bar x \in U$, there exist $v_{1}, \dots, v_{n} \in \R^{n}$ linearly independent such that 
for any element $(X_{\infty},\sfd_{\infty},\bar x_{\infty}) \in \Tan(X, \sfd,\bar x)$  and 
for each $z \in X_{\infty}$ there exist 
$\iota^{z}_{1}, \dots, \iota^{z}_{n} \colon \R \to X_{\infty}$ so that \medskip
\begin{itemize}
	\item[i)] 	$\iota^{z}_{j}(0) = z$, for any $j = 1,\dots, n$; \\
	\item[ii)] 	$\sfd_{\infty}(\iota^{z}_{j}(t),\iota^{z}_{j}(s)) = |t - s|$, for any $j = 1,\dots, n$, for all $s,t \in \R$; \\
	\item[iii)] 	$\sfd_{\infty}(\iota^{z}_{j}(t),\iota^{z}_{k}(t)) \geq C |t| \cdot |v_{j} - v_{k}|$, for any $j, k = 1,\dots, n$, for all $t \in \R$; \medskip
\end{itemize}
for some positive constant $C$. For each $z \in X_{\infty}$, each line $\iota^{z}_{i}$ is obtained as the blow-up of a Lipschitz curve, with the blow-up depending on $z$.
\end{theorem}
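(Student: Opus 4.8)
The plan is to prove Theorem~\ref{T:curves} by combining Corollary~\ref{C:resume} with the change-of-base-point result, Theorem~\ref{thm:Enrico}, in exactly the way Corollary~\ref{C:differentpoint} upgrades Proposition~\ref{P:generaline}. The key observation is that Corollary~\ref{C:resume} already delivers the $n$ lines together with the linearly independent vectors $v_{1},\dots,v_{n}$ and the constant $C$, but only through the base point $\bar x_{\infty}$ of the tangent; to obtain lines through an arbitrary $z \in X_{\infty}$ it suffices to re-base the tangent at $z$ and to observe that, under the doubling assumption, the re-based space is again a tangent of $X$ at $\bar x$.

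First I would fix $\bar x$ in the intersection of two full-measure sets: the set produced by Corollary~\ref{C:resume}, on which the vectors $v_{1},\dots,v_{n}\in\R^{n}$ and the constant $C$ are defined, and the complement of the exceptional set of Theorem~\ref{thm:Enrico}, which has full $\mm$-measure precisely because $(X,\sfd,\mm)$ is doubling. This intersection still has full measure in $U$. Crucially, the vectors $v_{j}$ and the constant $C$ furnished by Corollary~\ref{C:resume} are attached to $\bar x$ and do not depend on the particular tangent space, so they will serve uniformly for every $z$ below.

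Next I would take any $(X_{\infty},\sfd_{\infty},\bar x_{\infty})\in\Tan(X,\sfd,\bar x)$ and any $z\in X_{\infty}$. By Theorem~\ref{thm:Enrico} the re-based pointed space $(X_{\infty},\sfd_{\infty},z)$ is itself an element of $\Tan(X,\sfd,\bar x)$. I would then apply Corollary~\ref{C:resume} to this element: since its distinguished point is now $z$, the corollary yields maps $\iota^{z}_{1},\dots,\iota^{z}_{n}\colon\R\to X_{\infty}$ with $\iota^{z}_{j}(0)=z$, each an isometric embedding of $\R$, and satisfying the separation estimate $\sfd_{\infty}(\iota^{z}_{j}(t),\iota^{z}_{k}(t))\geq C|t|\,|v_{j}-v_{k}|$ with the same $v_{j}$ and $C$ as before. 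Each $\iota^{z}_{i}$ is, by Corollary~\ref{C:resume}, the blow-up of a Lipschitz curve; because the approximate isometries and dilation subsequence in play are those realizing $(X_{\infty},\sfd_{\infty},z)$ as a tangent (under which the distinguished point of the rescaled space is carried to $z$), this blow-up depends on $z$, exactly as asserted in the statement.

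The argument is essentially a bookkeeping combination of two already-established facts, so there is no genuinely hard step. The only point requiring care is the uniformity of $v_{1},\dots,v_{n}$ and $C$ across all choices of $z$ and of $(X_{\infty},\sfd_{\infty},\bar x_{\infty})$; this is guaranteed because Corollary~\ref{C:resume} associates these data to the point $\bar x\in U$ rather than to any individual tangent, so re-basing at $z$ alters neither. One should also check that intersecting the independent-representation set of Corollary~\ref{C:resume} with the complement of the exceptional set of Theorem~\ref{thm:Enrico} still leaves a set of full measure, which is immediate since both sets are of full $\mm$-measure.
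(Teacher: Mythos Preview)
Your proposal is correct and matches the paper's intended argument: the paper does not spell out a proof of Theorem~\ref{T:curves} but simply remarks that one can argue as in Corollary~\ref{C:differentpoint}, i.e.\ combine Corollary~\ref{C:resume} with Le~Donne's change-of-base-point Theorem~\ref{thm:Enrico}, which is exactly what you do. Your observation that $v_{1},\dots,v_{n}$ and $C$ are attached to $\bar x$ rather than to a particular tangent is the only point needing care, and you handle it correctly.
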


%\begin{corollary}\label{C:surjcetivechart}
%Let $(X,\sfd,\mm)$ be a Lipschitz differentiability space and $(U,\f)$ be an $n$-dimensional chart. 
%Assume moreover $\mm$ to be doubling. Then any tangent function $u_{\f}$ compatible
%\end{corollary}
%

%%%%%%%%%%%%%%%%%%%%%%%%%%%%%%%%%%%%%%%%%%%%%%%%%%%%%%
%%%%%%%%%%%%%%%%%%%%%%%%%%%%%%%%%%%%%%%%%%%%%%%%%%%%%%
%%%%%%%%%%%%%%%%%%%%%%%%%%%%%%%%%%%%%%%%%%%%%%%%%%%%%%
%%%%%%%%%%%%%%%%%%%%%%%%%%%%%%%%%%%%%%%%%%%%%%%%%%%%%%
%%%%%%%%%%%%%%%%%%%%%%%%%%%%%%%%%%%%%%%%%%%%%%%%%%%%%%
%%----------------------APPLICATION TO REGULAR SPACES-----------------%%%%%%%%%%%%%%%%%%

\section{Tangent lines in spaces with splitting tangents}\label{sec:split}

As stated in Theorem \ref{T:cheeger}, doubling metric measure spaces supporting a local
$p$-Poincar\'e inequality are Lipschitz differentiability spaces
and in particular Corollary \ref{C:resume} applies. 
For this class of more regular metric measure spaces, results on the structure of tangent spaces were already at disposal. 
For instance in \cite{cheeger:lip}, Theorem 8.5, existence of integral curves for tangent functions was proved.
This in turn implies the existence of sufficiently many geodesic lines in the tangent space. 
But no explicit relation between geodesic lines in the tangent space and curves on the metric measure space was shown to exist.
Therefore Corollary \ref{C:resume} brings new information also on the structure of tangent 
spaces for doubling metric measure space supporting a local $p$-Poincar\'e inequality.

In this last section we show that if $n$ is the dimension of a chart of the measurable differentiable structure of $(X,\sfd,\mm)$ seen 
as a Lipschitz differentiability space and if $d$ is the dimension of a Euclidean tangent space at $x$, then $n\leq d$ at $\mm$-a.e. point of $X$.
In particular, we are interested in a special class of metric measure spaces $(X,\sfd,\mm)$ 
having the \emph{splitting of tangents} property: if
$$
 (X_{\infty},\sfd_{\infty},\bar x_{\infty}) \in \Tan(X,\sfd,\bar x)
$$
for some $\bar x \in X$ and if $X_{\infty}$ contains an isometric copy of $\R$ going through $\bar x_\infty$,
then $(X_{\infty},\sfd_{\infty})$ is isometric to
$$
 (\R \times Y,|\cdot| \times \sfd_Y)
$$
where $(Y,\sfd_Y)$ is a metric space.

We obtain the following result.

\begin{theorem}\label{T:n=k}
 Suppose that $(X,\sfd,\mm)$ is a doubling Lipschitz differentiability space with the splitting of tangents property.
 Let $(U,\f)$ be an $n$-dimensional chart of $(X,\sfd,\mm)$. 
 Then for $\mm$-a.e. $\bar x \in U$  any
 $(X_{\infty}, \sfd_{\infty},\bar x_{\infty}) \in \Tan(X,\sfd,\bar x)$ is of the form
$$
(X_{\infty}^{d}\times \R^{d},\sfd^{d}_{\infty}\times | \cdot |_, (\bar x^{d}_{\infty}, 0)),
$$
with $d\geq n$. 
%Moreover 
%%
%$$
%\iota_{j}(\R) \subset \{ (x^{n}_{\infty}, v ) : v \in \R^{n} \}, \qquad \forall \, j = 1, \dots, n,
%$$
%%
%and each $\iota_{j}$ is obtained as the limit of Lipschitz curve.
\end{theorem}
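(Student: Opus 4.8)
The plan is to combine the $n$ tangent lines produced in Corollary \ref{C:resume} with the splitting of tangents property, peeling off one Euclidean factor at a time until a full $\R^n$ has been split off. First I would fix $\bar x$ in the common full-measure set of Corollary \ref{C:resume} and of the change-of-basepoint Theorem \ref{thm:Enrico}, fix $(X_\infty,\sfd_\infty,\bar x_\infty)\in\Tan(X,\sfd,\bar x)$ together with a defining subsequence $r_i\to 0$, and, passing to a further subsequence (Remark \ref{R:simultaneuslimits} together with Lemma \ref{L:tgfunct}), construct \emph{simultaneously} the $n$ tangent lines $\iota_1,\dots,\iota_n$ and the compatible tangent function $u=(u_1,\dots,u_n)\colon X_\infty\to\R^n$ of the chart map $\f$. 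The computation driving everything is that along each line $u(\iota_j(t))=t\,v_j$, where $v_j=(\f\circ\gamma^j)'(0)$ are the linearly independent directions of Proposition \ref{P:differentiable1}; this is immediate from the definitions of the compatible tangent function and of $\iota_j$.

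I would then run an induction producing isometries $X_\infty\cong \R^k\times W_k$ (with the $\ell^2$-product metric) sending $\bar x_\infty$ to $(0,w_k)$, such that the pulled-back tangent function decomposes additively as $u(p,w)=L_k(p)+h_k(w)$, with $L_k\colon\R^k\to\R^n$ linear, $h_k$ generalized linear on $W_k$, and $\operatorname{Im}(L_k)=:S_k$ a $k$-dimensional subspace spanned by $k$ of the $v_j$. The base case $k=0$ is trivial. For the inductive step, assume $k<n$; since $\dim S_k=k<\dim\operatorname{span}(v_1,\dots,v_n)=n$, choose $m$ with $v_m\notin S_k$. Writing $\iota_m(t)=(A(t),\beta(t))\in\R^k\times W_k$, Lemma \ref{L:geodesicproduct} gives that $A$ is an affine (hence linear) geodesic in $\R^k$ and $\beta$ a geodesic in $W_k$. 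From $u(\iota_m(t))=t\,v_m$ and the additive form of $u$, one obtains $h_k(\beta(t))=t\bigl(v_m-L_k(A'(0))\bigr)$; since $L_k(A'(0))\in S_k$ while $v_m\notin S_k$, the right-hand side is non-constant, so $\beta$ is a genuine line in $W_k$ through $w_k$.

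It remains to promote this line in $W_k$ to an extra Euclidean factor of $X_\infty$. I would lift $\beta$ to the unit-speed line $\ell(t)=(0,\beta(t/|\dot\beta|))$ lying in $\{0\}\times W_k$ and through $\bar x_\infty$, and apply the splitting property to $X_\infty$ (legitimate, as $X_\infty\in\Tan(X,\sfd,\bar x)$ and $\ell$ passes through $\bar x_\infty$) to get $X_\infty\cong\R\times V$. The point is then to show this new splitting is compatible with the existing flat factor, i.e. that in fact $X_\infty\cong\R^{k+1}\times W_{k+1}$. The key computation is that the coordinate of the new $\R$-factor equals the Busemann function $b^+_\ell$ of $\ell$, and, using $\sfd_\infty((p,w),\ell(t))=\sqrt{|p|^2+\sfd_{W_k}(w,\beta(t))^2}$, one checks that $b^+_\ell(p,w)=\beta^+(w)$ is independent of the $\R^k$-coordinate $p$. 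Hence the old flat factor $\R^k\times\{w_k\}$ lies in a single level set of $b^+_\ell$ and sits orthogonally to $\ell$; combining this orthogonality with the splitting $\R\times V$ yields the refined splitting $\R^{k+1}\times W_{k+1}$, with the additive decomposition of $u$ inherited and $S_{k+1}=S_k\oplus\R\,(v_m-L_k(A'(0)))$ of dimension $k+1$. Iterating until $k=n$ produces $X_\infty\cong\R^n\times W_n$; writing $d$ for the dimension of the maximal Euclidean factor then gives $d\geq n$, which is the assertion.

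The main obstacle is precisely this promotion step: having two transversal metric splittings of $X_\infty$ does not, for a general metric space, force a joint $\R^{k+1}$-splitting. I expect to control it through the Busemann/distance-function computation above, showing that the distance to the new line ignores the already-split Euclidean coordinates, together with the robustness of the splitting property guaranteed by Theorem \ref{thm:Enrico} (so that the splitting is available through every point and the factor $W_k$ inherits enough splitting structure to descend the $\R$-factor). The additive decomposition $u(p,w)=L_k(p)+h_k(w)$ of generalized linear functions on $\ell^2$-products is the secondary ingredient: it follows from the generalized linearity of tangent functions and the fact that such functions are linear along a flat factor, and it is what simultaneously yields transversality of the new line and the persistence of the inductive hypothesis.
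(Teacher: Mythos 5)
Your overall scheme (induction on $k$, using the tangent function to certify that some line $\iota_m$ has non-constant projection to the non-Euclidean factor, then splitting again) is a genuinely different route from the paper's, and it correctly isolates the crux. But the crux itself --- the ``promotion step'' --- is not proved, and the tools you invoke for it do not suffice. First, the splitting-of-tangents property, as defined, applies only to elements of $\Tan(X,\sfd,\bar x)$; neither $V$ nor $W_k$ is known to be a tangent space of $X$, so you may not apply the splitting property to them, and Theorem \ref{thm:Enrico} does not help here: it moves the \emph{basepoint} within $X_\infty$, it does not say that metric \emph{factors} of $X_\infty$ are themselves tangents. Second, your Busemann computation only shows that the old flat $\R^k\times\{w_k\}$ lies in a level set of $b^+_\ell$; in a general metric space this ``orthogonality'' together with an abstract isometry $X_\infty\cong\R\times V$ does not produce a joint $\R^{k+1}$-factor --- the product structure on the level set is exactly what is missing, and you acknowledge this without closing it. The step is in fact provable, but by a different argument: writing the splitting along $\ell$ as $h\colon\R\times V\to\R^k\times W_k$, each line $h(\cdot,v)$ stays at bounded distance from $\ell\subset\{0\}\times W_k$, so by Lemma \ref{L:geodesicproduct} its $\R^k$-component is an affine, bounded, hence \emph{constant} map $a(v)$; comparing the two distance formulas on $\{a(v)=0\}$ then exhibits an isometry $W_k\cong\R\times W_{k+1}$, whence $X_\infty\cong\R^{k+1}\times W_{k+1}$. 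Some such argument must be supplied; it is the heart of the theorem, not a detail.

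There is also a secondary gap: the additive decomposition $u(p,w)=L_k(p)+h_k(w)$ is asserted to follow from generalized linearity, but restrictions of generalized linear functions to fibers or flats are not automatically generalized linear, so this needs a proof (a weaker statement --- linearity of $u$ along the flat and along each $\iota_j$, established by the blow-up computation rather than by abstract generalized linearity --- would suffice for your induction). For comparison, the paper sidesteps the promotion problem entirely: it concedes that iterated splitting may stall because a line $\iota_j$ can fall inside the already-split Euclidean factor, takes whatever decomposition $X_\infty=X^d_\infty\times\R^d$ the splittings yield, and then proves $d\geq n$ by showing that the restriction of the tangent chart function $\bar u_\f$ to $\{\bar x^d_\infty\}\times\R^d$ is a quotient map onto $\R^n$, following Corollary 5.1 of \cite{david}; a Lipschitz quotient map from $\R^d$ onto $\R^n$ forces $d\geq n$. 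So your induction, once repaired as above, gives an alternative proof that trades David's quotient-map machinery for a hands-on splitting-compatibility lemma; as written, however, it does not yet constitute a proof.
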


Compare Theorem \ref{T:n=k}  to the result from \cite{GMR}, that 
can be rephrased as
\begin{theorem}\label{thm:gmr}
 Suppose that $(X,\sfd,\mm)$ is a geodesic doubling metric measure space with the splitting of tangents property.
 Then at $\mm$-a.e. point in $X$ there exists a Euclidean tangent space.
\end{theorem}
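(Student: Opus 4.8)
The plan is to fix a point $\bar x$ in a suitable full-measure set, take an arbitrary tangent $(X_\infty,\sfd_\infty,\bar x_\infty)\in\Tan(X,\sfd,\bar x)$, and reduce the statement to two independent pieces: first, that $X_\infty$ admits an isometric splitting $X_\infty=\R^d\times Z$ in which the complementary factor $Z$ carries no line through its base point; and second, a linear-algebra count showing that under any such splitting one necessarily has $d\ge n$. I would begin by applying Corollary \ref{C:resume}: this produces linearly independent $v_1,\dots,v_n\in\R^n$ and, inside $X_\infty$, lines $\iota_1,\dots,\iota_n\colon\R\to X_\infty$ through $\bar x_\infty$, realized as blow-ups of curves $\gamma^1,\dots,\gamma^n$ with $(\f\circ\gamma^j)'(0)=v_j$. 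Including the components of $\f$ in the countable family of Lemma \ref{L:tgfunct} and passing to the relevant subsequence, I also obtain the compatible tangent function $u_\f\colon X_\infty\to\R^n$ of the chart map; by the blow-up construction it satisfies $u_\f(\iota_j(t))=t\,v_j$ for every $t\in\R$, and by the generalized linearity recalled in Subsection \ref{Ss:tangetfunct} each component of $u_\f$ is generalized linear.

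For the counting step, suppose $X_\infty=\R^d\times Z$ with $Z$ containing no line through its base point $z_0$, and write $\bar x_\infty=(0,z_0)$ after translating the Euclidean factor. Each $\iota_j$ is a unit-speed line, so by Lemma \ref{L:geodesicproduct} it decomposes as $\iota_j(t)=(\ell_j(t),\zeta_j(t))$ with $\ell_j$ a geodesic in $\R^d$ and $\zeta_j$ a geodesic in $Z$ through $z_0$. A nonconstant $\zeta_j$ would be a bi-infinite geodesic, i.e. a line in $Z$ through $z_0$, which is excluded; hence every $\zeta_j$ is constant and $\iota_j(t)=(t\,e_j,z_0)$ for a unit vector $e_j\in\R^d$. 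Restricting $u_\f$ to the totally geodesic slice $\R^d\times\{z_0\}\cong\R^d$ gives a generalized linear, hence genuinely linear, map $L\colon\R^d\to\R^n$ with $L(e_j)=v_j$. Since the $v_j$ are linearly independent, $L$ has rank $n$, forcing $d\ge n$, which is the desired inequality.

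The crux is therefore to produce the splitting $X_\infty=\R^d\times Z$ with a lineless complement. By doubling, $X_\infty$ has finite Assouad dimension, so any isometric Euclidean factor has bounded dimension and there is a maximal $d$ with $X_\infty=\R^d\times Z$; it remains to show that for this maximal $d$ the factor $Z$ has no line through $z_0$. This is where the splitting hypothesis enters, together with a de Rham--type uniqueness for the Euclidean factor: if $Z$ contained a line $\sigma$ through $z_0$, then $(0,\sigma)$ would be a line in the tangent $X_\infty$ whose base-point direction is transverse to $\R^d$, and the splitting of tangents property applied to $X_\infty$ would peel off an $\R$ along $(0,\sigma)$; combining it with the existing $\R^d$ would yield a $(d{+}1)$-dimensional Euclidean factor, contradicting maximality. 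I expect this combination of two non-parallel $\R$-splittings into a single $\R^{d+1}$ to be the main obstacle, since the splitting property only removes one line at a time and only for tangent spaces, whereas the intermediate factor $Z$ need not itself be a tangent of $X$ (already for $X=\R^2$ the factor $\R$ is not a tangent). To handle this I would pass to tangents of tangents, which remain tangents of $X$ at $\bar x$ by the iteration principle underlying Theorem \ref{thm:Enrico}, and establish uniqueness of the Euclidean factor for these --- at least along the geodesic directions supplied by the lines --- following the arguments of David and Schioppa. Once the lineless maximal splitting is secured, the counting step above closes the proof and, as in Theorem \ref{thm:gmr}, exhibits a Euclidean tangent factor of dimension at least the chart dimension $n$.
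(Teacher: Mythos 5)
Your proposal is aimed at the wrong statement, and the paper itself gives no internal proof of Theorem \ref{thm:gmr}: it is explicitly a rephrasing of the result of \cite{GMR}, proved there by the iterated-tangents scheme of Preiss \cite{P1987} as adapted by Le Donne \cite{LD2011}. Theorem \ref{thm:gmr} carries no chart and no Lipschitz differentiability hypothesis, so your opening move --- invoking Corollary \ref{C:resume} to produce $n$ lines with independent directions $(\f\circ\gamma^{j})'(0)=v_{j}$, and later the tangent function $u_{\f}$ of the chart map --- is unavailable: Corollary \ref{C:resume} requires a Lipschitz differentiability space with an $n$-dimensional chart, and a geodesic doubling space need not be one (doubling alone gives no Poincar\'e inequality), nor does any integer $n$ occur in the statement. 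What you outline is essentially the paper's proof of Theorem \ref{T:n=k} (tangent functions, generalized linearity, the quotient-map argument of \cite{david}) transplanted to a theorem with different hypotheses. In Theorem \ref{thm:gmr} the role of the chart is played by geodesicity, which you never use: blow-ups of geodesics yield lines in tangents at every point, by the mechanism of Proposition \ref{P:generaline}, with no differentiability structure needed.

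The conclusion you reach is also weaker in kind than what is asserted. A splitting $X_{\infty}=\R^{d}\times Z$ with $Z$ lineless does not make any tangent Euclidean: a lineless $Z$ need not be a point, so a ``Euclidean tangent factor'' is not a ``Euclidean tangent space.'' The proof in \cite{GMR} kills the complement by iteration: take a tangent with maximal Euclidean factor $\R^{k}\times Z$ (doubling bounds $k$); if $Z$ is not a point, it is a nontrivial geodesic space, so blowing up along one of its nonconstant geodesics produces a further tangent $\R^{k}\times Z'$ with a line in $Z'$, which is still a tangent of $X$ at $\bar x$ by Theorem \ref{thm:Enrico}, and splitting then contradicts maximality. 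Crucially, the obstacle you yourself flag --- merging a new line into the existing Euclidean factor to obtain $\R^{d+1}$ --- is exactly the step that is \emph{not} available under the bare splitting-of-tangents property: in \cite{GMR} it is resolved because in the $\mathsf{RCD}^{*}(K,N)$ setting the splitting theorem of \cite{gigli:split} applies to the complement factor $Z'$ itself (the class is stable under taking factors and tangents), giving $Z'=\R\times W$ directly. Absent such stability, the present paper deliberately settles for the weaker Theorem \ref{T:n=k} and remarks in Step 1 of its proof that the naive iteration cannot be continued; your plan to ``establish uniqueness of the Euclidean factor \dots following David and Schioppa'' leaves precisely this step unproven. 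So there are two genuine gaps: the unjustified import of the differentiability structure, and the missing (and, in this generality, unavailable by the paper's own account) merging of splittings needed to conclude that some tangent is isometric to a full $\R^{k}$.
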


Theorem \ref{thm:gmr} was formulated in \cite{GMR} for $\mathsf{RCD}^{*}(K,N)$ spaces (metric measure spaces
with Riemannian Ricci curvature bounded below by $K \in \R$ and dimension from above by $N$), for which any tangent
is an $\mathsf{RCD}^{*}(0,N)$ space having the splitting property, as was shown by Gigli \cite{gigli:split}.
Theorem \ref{T:n=k} now shows that taking into account the fact that $\mathsf{RCD}^{*}(K,N)$ spaces are doubling
and support a local Poincar\'e inequality \cite{R2012a,R2012b}, we immediately have that any tangent space contains
an $\R^n$ part with dimension at least the dimension of the chart. 
For a comprehensive treatise on the above mentioned family of spaces we refer
to \cite{lottvillani:metric,sturm:I, sturm:II} for the defintion of $\mathsf{CD}(K,N)$ and to 
\cite{AGS, AGS11b} for the infinite dimensional Riemannian version. 
Finally $\mathsf{RCD}^{*}(K,N)$ with $N \in \R$ has been introduced independently in \cite{AMS} and \cite{EKS}.

For $\mathsf{RCD}^{*}(K,N)$ spaces more
can be said on the relation of the charts and the tangent spaces
than the conclusion of Theorem \ref{T:n=k}.
A recent result by Mondino and Naber in \cite{mondino:tangent} states that for 
$(X,\sfd,\mm)$ verifying $\mathsf{RCD}^{*}(K,N)$, at $\mm$-a.e. $x \in X$ 
there exists a unique tangent space and it is isomorphic, in the sense of metric measure spaces, 
to  $(\R^{d}, |\cdot|, \mathcal{L}^{d})$, with $d$ varying measurably in $x$. 
Moreover, they proved the following theorem.
\begin{theorem}\label{thm:M-N}
Let $(X,\sfd,\mm)$ be an $\mathsf{RCD}^{*}(K,N)$ space for some $K,N \in \R$ with $N > 1$.
Then there exists a countable collection $\{R_j\}_{j \in \N}$ of $\mm$-measurable subsets of $X$,
covering $X$ up to an $\mm$-negligible set, such that each $R_j$ is biLipschitz to a measurable subset of $\R^{k_j}$,
for some $1 \le k_j \le N$, $k_j$ possibly depending on $j$.
\end{theorem}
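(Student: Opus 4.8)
The plan is to promote the almost-everywhere identification of the tangent with a Euclidean space $(\R^{k},|\cdot|,\mathcal{L}^{k})$ — recalled just before the statement — into a genuine countable biLipschitz atlas, by exhibiting at $\mm$-almost every point a definite scale on which an explicit coordinate map is biLipschitz. First I would stratify according to the pointwise dimension. Since the integer $k(x)$ giving the unique Euclidean tangent at $x$ varies $\mm$-measurably, the sets $X_{k}:=\{x:k(x)=k\}$ are $\mm$-measurable and cover $X$ up to an $\mm$-negligible set; as $N>1$ and every $x\in\supp(\mm)$ has a nontrivial tangent, only finitely many values $1\le k\le\lfloor N\rfloor$ occur. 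It therefore suffices to decompose each $X_{k}$ into countably many pieces biLipschitz to a subset of $\R^{k}$, and the final family $\{R_{j}\}$ is the union of these decompositions over $k$.

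Next I would build the coordinate maps. At $\mm$-a.e. $x\in X_{k}$ the rescaled spaces $(X,\sfd/r,\mm^{x}_{r},x)$ converge in the $pmGH$ sense to $(\R^{k},|\cdot|,\mathcal{L}^{k},0)$ as $r\to 0$; equivalently, for every $\delta>0$ there is $r_{\delta}(x)>0$ below which the rescaled ball is $\delta$-close to the Euclidean unit ball. On such balls the $\mathsf{RCD}$ structure produces a $\delta$-\emph{splitting map} $u^{x,r}=(u_{1},\dots,u_{k})\colon B_{r}(x)\to\R^{k}$, built from harmonic functions with controlled gradient and Hessian, which after rescaling by $1/r$ is $(1+\delta)$-biLipschitz onto its image. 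The existence of $k$ independent splitting directions is exactly the rigidity underlying this paper: a $k$-dimensional Euclidean tangent splits off $k$ isometric lines in the sense of Corollary \ref{C:resume}, and Gigli's splitting theorem \cite{gigli:split} then forces the tangent to be precisely $\R^{k}$; the quantitative almost-splitting theorem is the scale-by-scale version of this rigidity.

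Then I would run a covering argument. Fixing $k$ and $\delta=1/m$, for $\mm$-a.e. $x\in X_{k}$ there is a radius below which $u^{x,r}$ is $(1+1/m)$-biLipschitz. By an Egorov/Lusin reduction I restrict to a subset of positive measure on which this scale is bounded below by some $r_{0}>0$ and the coordinate map may be chosen to depend measurably on the base point; covering this subset by countably many balls of radius comparable to $r_{0}$, on each such ball a single fixed map is biLipschitz onto a subset of $\R^{k}$. Taking the countable union of these balls over all $m\in\N$, over the countably many covering balls, and over $1\le k\le\lfloor N\rfloor$, and discarding an $\mm$-null remainder, yields the family $\{R_{j}\}_{j\in\N}$; since biLipschitz images of $\mm$-measurable sets are measurable, each $R_{j}$ is biLipschitz to a measurable subset of $\R^{k_{j}}$ with $1\le k_{j}\le N$.

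The main obstacle is the construction of the $\delta$-splitting map together with its biLipschitz estimate, i.e. upgrading the purely qualitative $pmGH$-convergence to a Euclidean tangent into a \emph{quantitative} chart defined at a definite, uniformly positive scale. This is where the genuine $\mathsf{RCD}$ hypotheses enter, beyond the doubling and Poincar\'e properties used in the rest of the paper: one needs the Bochner inequality to obtain $W^{1,2}$ and Hessian bounds on harmonic almost-splitting functions, together with a segment-type maximal-function argument to convert closeness to $\R^{k}$ at one scale into the two-sided biLipschitz bound on a whole ball. The stability of the isometric splitting under Gromov-Hausdorff limits supplies the rigid skeleton, but controlling the biLipschitz constant by the Gromov-Hausdorff distance to $\R^{k}$ is the delicate analytic estimate at the heart of the argument.
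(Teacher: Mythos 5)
First, a point of context: the paper contains no proof of Theorem \ref{thm:M-N} at all --- it is imported verbatim from Mondino and Naber \cite{mondino:tangent} --- so your proposal has to be measured against their argument, of which it is essentially a reconstruction: stratify by the a.e.\ tangent dimension $k(x)$, build harmonic $\delta$-splitting maps on balls that are Gromov--Hausdorff close to Euclidean balls, then run an Egorov-plus-covering argument. The skeleton is the right one, and you correctly reduce the theorem to a quantitative biLipschitz estimate for splitting maps. (The appeal to Corollary \ref{C:resume} and \cite{gigli:split} is also not how the a.e.\ Euclidean tangent is obtained in \cite{mondino:tangent}, but since the paper recalls that result separately, taking it as an input is legitimate.)

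The genuine gap is in the biLipschitz estimate itself, which is not merely deferred but misstated in a way that would derail your covering step. A $\delta$-splitting map carries a pointwise gradient bound but only an \emph{integral} ($L^{2}$) Hessian bound, so after rescaling it is emphatically not $(1+\delta)$-biLipschitz onto its image on the whole ball $B_{r}(x)$: two-sided control can fail on a subset of small but positive measure, and if the map were biLipschitz on a full ball at a.e.\ point, the conclusion would be a covering by \emph{open} biLipschitz charts --- a manifold-type statement far stronger than the theorem, whose conclusion is measurable subsets $R_{j}$ precisely because a bad set must be discarded. The actual mechanism in \cite{mondino:tangent} restricts $u$ to the set of points $y$ at which, at \emph{every} scale $s \le r_{0}$, the ball $B_{s}(y)$ is close to $\R^{k}$ and the maximal function of $|\mathrm{Hess}\, u|^{2}$ is small; a telescoping estimate over dyadic scales then yields the two-sided bound on that set of almost full measure. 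Your Egorov reduction uniformizes only a single scale $r_{\delta}(x)$, which is insufficient, since closeness to $\R^{k}$ at one scale does not propagate to smaller scales for free; this propagation, together with the Bochner-based construction of the splitting maps that you flag as the ``main obstacle'' without supplying it, is the entire analytic content of the theorem. As it stands, the proposal is a plausible plan whose decisive step is both unproved and asserted in a false strengthening.
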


Combining this result with the fact that if a Lipschitz differentiability space is (locally) biLipschitz embeddable into a Euclidean space,
then at almost every point all the tangent spaces are biLipschitz equivalent to $\R^n$, where $n$ is the dimension 
of the chart, see Corollary 8.3 in \cite{david}. Therefore, for $\mathsf{RCD}^{*}(K,N)$ spaces at almost every point
the tangent is $\R^n$ where the $n$ is the dimension of the chart. Let us note that it is still unknown if in this context the
dimension $n$ of the tangent (and the chart) depends on the point.\\

We prove Theorem \ref{T:n=k}, which is valid without the biLipschitz embeddability to $\R^n$. 

\medskip

\begin{proof}[Proof of Theorem \ref{T:n=k}]

{\bf Step 1.} \\
\noindent
By Corollary \ref{C:resume}
any element $(X_{\infty},\sfd_{\infty},\mm_{\infty},\bar x_{\infty}) \in \Tan(X,\sfd,\mm,\bar x)$ have $n$ distinct isometric copies of $\R$: 
$$
\iota_{j} \colon \R \to X_{\infty}, \qquad \iota_{j}(0) = \bar x_{\infty}, \qquad j = 1,\dots, n.
$$
By the splitting property, there exists an isometry 
$$
h_{1} \colon (X_{\infty}, \sfd_{\infty})  \longrightarrow (X_{\infty}^{1} \times \R , \sfd_{\infty}^{1} \times |\cdot|),  \qquad h_{1}(\bar x_{\infty} ) = (\bar x_{\infty}^{1},0),  
$$
with $h_{1}(\iota_{1}(\R)) =\{ (\bar x_{\infty}^{1},t) : t\in \R \}$ and splitting the measure.
Since the $n$ geodesics are all disjoint, composing isometries and applying Lemma \ref{L:geodesicproduct} we deduce the existence of $n-1$ geodesics, again denoted with 
$$
\iota_{j} : (\R,|\cdot|) \to (X_{\infty}^{1}, \sfd^{1}_{\infty}), \qquad j = 2, \dots, n.
$$
By Lemma \ref{L:geodesicproduct} we can also deduce that $\iota_{2}(\R), \dots, \iota_{n}(\R)$ are all disjoint 
and we can use again the splitting property to rule out another isometric copy of $\R$. 

The same reasoning cannot be repeated to obtain a splitting of the form $X_{\infty} \sim X^{n}_{\infty} \times \R^{n}$. It might be the case that  
for some $j = 3, \dots, n$, $\iota_{j}(\R)$ is already contained in the Euclidean component of the tangent space, and therefore the projection in the purely 
metric component of $X_{\infty}$ could be the constant geodesic, not producing a new component to rule out via the splitting property.  

\medskip

{\bf Step 2.} \\
\noindent
Consider the $n$-dimensional chart $(U,\f)$  with $\f : U \to \R^{n}$ Lipschitz and any $\bar x \in U$ such that Corollary \ref{C:differentpoint} applies.
Fix also $(X_{\infty},\sfd_{\infty},\mm_{\infty}, \bar x_{\infty}) \in \Tan(X,\sfd,\mm,\bar x)$ and $u_{\f}$, the tangent function of $\f$ at $\bar x$. 
Note that, possibly passing to subsequences, $u_{\f}$ is well-defined.

Repeating the argument of {\bf Step 1.} changing the reference point (see \cite{LD2011}, Theorem 1.1), we have the following:
for some $d \in \N$ all the possible splittings obtained from the lines of Theorem \ref{T:curves} give a decomposition of the following type: 
$X_{\infty} =  X^{d}_{\infty} \times \R^{d}$, where the identity holds in the sense of metric measure spaces, and $\R^{d}$ is equipped with the Euclidean distance 
and the Lebesgue measure.

\medskip

{\bf Step 3.} \\ 
\noindent
Consider the sequence $\{r_{i} \}_{i\in \N}$ producing $(X_{\infty},\sfd_{\infty},\mm_{\infty}, \bar x_{\infty})$ as the tangent space  
and the $\ve_{i}$-isometries $f_{i}$ and $f^{-1}_{i}$. 
Let $z \in X_{\infty}$ be any point, then by definition 
$$
u_{\f}(z) = \lim_{i \to \infty} \frac{\f(f^{-1}_{i} (z)  )   - \f(\bar x)}{r_{i}}. 
$$
As observed in the proof of Proposition \ref{P:result1}, after a suitable reparametrization with unit speed, 
there exists a sequence of times $\{t_{i}\}_{i \in \N}$ with $t_{i} \to 1$ as $i \to \infty$ such that
$\sfd_{\infty} (\iota^{0}_{j}(1), f_{i} (\gamma^{j}_{t_{i}r_{i}})) \to 0$, for each $j = 1, \dots, n$. We pose $z = \iota^{0}_{j}(1)$
and observe that 
\begin{align*}
\frac{1}{r_{i}} | \f(\gamma^{j}_{t_{i}r_{i}})  -  \f(  f^{-1}_{i}(z)  )  | 
		\leq 	&~  L \frac{1}{r_{i}} \sfd(\gamma^{j}_{t_{i}r_{i}}, f^{-1}_{i}(z)  ) \crcr
		\leq 	&~ L \left( \ve_{i }  + \sfd_{\infty} (f_{i}(\gamma^{j}_{t_{i}r_{i}}) , f_{i} (f^{-1}_{i}(z))  ) \right) \crcr
		\leq 	&~ L \left( \ve_{i }  + \sfd_{\infty} (f_{i}(\gamma^{j}_{t_{i}r_{i}}) , z) + \sfd_{\infty}(z, f_{i} (f^{-1}_{i}(z))  ) \right) \crcr
		\leq 	&~ C \ve_{i}.
\end{align*}
It therefore follows that 
$$
u_{\f}(\iota^{0}_{j}(1)) = \lim_{i \to \infty} \frac{ \f(\gamma^{j}_{t_{i} r_{i} })  - \f(\bar x)}{r_{i}} = (\f \circ \gamma^{j})'(0).
$$
Using a different $t_{i}$ converging to some other real numbers, 
it is easy to observe that $s \mapsto u_{\f}(\iota^{0}_{j}(s))$ is linear and
$$
Span \left\{ u_{\f}(\iota^{0}_{1}(1)), \dots, u_{\f}(\iota^{0}_{n}(1)) \right\} = \R^{n}.
$$
Thanks to Proposition 3.1 of \cite{david}, the same argument works for any $z \in X_{\infty}$ and therefore 
$s \mapsto u_{\f}(\iota^{z}_{j}(s))$, for any $z \in X_{\infty}$ and $j = 1, \dots, n$. 
%We now recall that $u_{\f}$ is generalized linear (Section \ref{Ss:tangetfunct}).  We can therefore use Lemma \ref{L:linear}
%and deduce that it is linear on $\R^{k}$. 
%Moreover $\{\iota_{j}(1) : j = 1,\dots, n \} \subset \R^{k}$. It follows that 
%

%
%
%%
%$$
%u_{\f} : X^{d}_{\infty} \times \R^{d} \to \R^{n}
%$$
%%
%with $u_{\f}$ Lipschitz. 
Finally, we consider $\bar u_{\f}$, the restriction of $u_{\f}$ to $\left\{ \bar x_{\infty}^{d} \right\} \times \R^{d} \to \R^{n}$. 
The claim can now be proven via showing that $\bar u_{\f}$ is a quotient map (again we refer to \cite{david} for the relative definition). 
This can be obtained repeating verbatim the proof of Corollary 5.1 of \cite{david} and using the linearity of $s \mapsto u_{\f}(\iota^{0}_{j}(s))$, 
together with Theorem \ref{T:curves}.
\end{proof}

\end{document}